\theoremstyle:=definition,remark,plain\do{%
        \expandafter\g@addto@macro\csname th@\theoremstyle\endcsname{%
            \addtolength\thm@preskip\parskip
            }%
        }
\def\thm@space@setup{%
  \thm@preskip=\parskip \thm@postskip=0pt
}
\newcommand{\arxiv}[1]{\href{https://arxiv.org/abs/#1}{arXiv:~#1}}
\newcommand{\ul}{\underline}
\newcommand{\ulr}[1]{\underline{\mr #1}}
\newcommand{\pT}{p^{k+2}_{T}}
\newcommand{\ph}{p^{k+2}_{h}}
\newcommand{\tbf}{\textbf}
\newcommand{\wc}{\widecheck}
\newcommand{\bb}{\mathbb}
\newcommand{\pphi}{\varphi}
\newcommand{\nf}{\nicefrac}
\newcommand{\cc}{\mathcal}
\newcommand{\de}{\partial}
\newcommand{\wh}{\widehat}
\newcommand{\grad}  {\bm\nabla}
\renewcommand{\ker} {\mbox{\rm ker}\,}
\newcommand{\divb} {\mbox{\textbf{div}}}		% Divergenza di un tensore (grassetto)
\renewcommand{\div} {\mbox{\rm{div}}\,}		% Divergenza di un vettore (minuscolo)
\newcommand{\tx}{\text}
\renewcommand{\bf}{\mathbf}
\newcommand{\mr}{\mathrm}
\DeclareMathOperator{\card}{card}
\newcommand{\jump}[2][F]{[#2]_{#1}}
\DeclareRobustCommand{\vect}[1]{\bm{#1}}
  \renewcommand{\vect}[1]{#1}%
\newtheorem{theorem}{Theorem}
\newtheorem{proposition}[theorem]{Proposition}
\newtheorem{lemma}[theorem]{Lemma}
\theoremstyle{remark}
\newtheorem{remark}[theorem]{Remark}
\theoremstyle{definition}
\newcommand{\sU}[1][h]{\mathsf{U}_{#1}}
\newcommand{\sV}[1][h]{\mathsf{V}_{#1}}
\newcommand{\sA}[1][h]{\mathsf{A}_{#1}}
\newcommand{\sB}[1][h]{\mathsf{B}_{#1}}
\newcommand{\trans}{^\intercal}
\newcommand{\email}[1]{\href{mailto:#1}{#1}}
\begin{document}

\title{A Hybrid High-Order method for Kirchhoff--Love plate bending problems\footnote{The work of the first author was supported by Agence Nationale de la Recherche projects HHOMM (ANR-15-CE40-0005), along with the second author, and ARAMIS (ANR-12-BS01-0021), along with the third and fourth authors; also, it was partially supported by SIR Research Grant no.~RBSI14VTOS funded by MIUR -- Italian Ministry of Education, Universities and Research.}}
\author[1,2]{Francesco Bonaldi\footnote{Corresponding author, \email{francesco.bonaldi@polimi.it}}}
\affil[1]{MOX, Dipartimento di Matematica, Politecnico di Milano, Milan, Italy}

\author[2]{Daniele A. Di Pietro\footnote{\email{daniele.di-pietro@umontpellier.fr}}}
\affil[2]{Institut Montpelliérain Alexander Grothendieck, CNRS, Univ. Montpellier}

\author[3]{Giuseppe Geymonat\footnote{\email{giuseppe.geymonat@polytechnique.edu}}}
\affil[3]{LMS, Ecole Polytechnique, CNRS, Universit\'{e} Paris-Saclay, 91128 Palaiseau, France}

\author[2]{Fran\c{c}oise Krasucki\footnote{\email{francoise.krasucki@umontpellier.fr}}}

\maketitle

\begin{abstract}
  \noindent
  We present a novel Hybrid High-Order (HHO) discretization of fourth-order elliptic problems arising from the mechanical modeling of the bending behavior of Kirchhoff--Love plates, including the biharmonic equation as a particular case.
  The proposed HHO method supports arbitrary approximation orders on general polygonal meshes, and reproduces the key mechanical equilibrium relations locally inside each element.
  When polynomials of degree $k\ge 1$ are used as unknowns, we prove convergence in $h^{k+1}$ (with $h$ denoting, as usual, the meshsize) in an energy-like norm.
  A key ingredient in the proof are novel approximation results for the {energy projector} on local polynomial spaces.
  Under biharmonic regularity assumptions, a sharp estimate in $h^{k+3}$ is also derived for the $L^2$-norm of the error on the deflection.
  The theoretical results are supported by numerical experiments, which additionally show the robustness of the method with respect to the choice of the stabilization.
  \bigskip \\
  \textbf{MSC2010:} 65N30, 65N12, 74K20\medskip\\
  \textbf{Keywords:} Hybrid High-Order methods, Kirchhoff--Love plates, biharmonic problems, {energy projector}
\end{abstract}

%% \tableofcontents

%------------------------------------------------------------------------------%
%------------------------------------------------------------------------------%

\section{Introduction}
As remarked by O.~C.~Zienkiewicz \cite{zienk1}, ``one of the early requirements of the Finite Element (FE)
approximation was the choice of shape functions which did not lead to infinite
strains on element interfaces and which therefore preserved a necessary degree
of continuity''. This requirement (also called of {conformity}) appeared easy to satisfy for simple self-adjoint problems
governed by second-order equations, where $C^{0}$-continuity at interfaces is enough. The situation is different as far as it concerns the knowledge, essential in structural engineering, of the bending of plates, whose numerical treatment has always been a goal of FE computations. Since thin plate bending in the Kirchhoff--Love approximation is governed by a fourth-order equation, $C^{1}$-continuity has to be introduced (and the continuity of both the function and of its normal derivative assured at interfaces). This was difficult to achieve and computationally expensive in the classical FE framework, see e.g.~Zienkiewicz \cite{zienk} for a first engineering-oriented discussion and Ciarlet \cite{ciarlet-num} for a mathematically-oriented one. In order to relax such $C^{1}$-continuity condition, many non-conforming, mixed, hybrid plates elements have been studied and tested all over the last fifty years, and the literature on this subject is very broad; a minimal and by far non-exhaustive sample includes  the seminal paper by Lascaux--Lesaint \cite{lascaux}, as well as the classical works of {Amara--Capatina--Chatti \cite{Amara.Capatina.ea:02} (based on a decomposition of the constraints imposed on the bending moments by applying twice the Tartar lemma and using the symmetry of the tensor)}, Bathe \cite{bathe}, Boffi--Brezzi--Fortin \cite{boffi}, Brenner \cite{brenner}, Brenner--Scott \cite{brennerscott}, Brezzi--Fortin \cite{brezzifortin}, Ciarlet\cite{ciarlet-num}, {Comodi \cite{Comodi:89}}, Hughes \cite{hug}, {Johnson \cite{Johnson:73}}; see also references therein.
More recent nonconforming methods which have similarities (and differences) with the one presented here include the Hybridizable Discontinuous Galerkin method~\cite{Cockburn.Dong.ea:09} of Cockburn--Dong--Guzm\'an and the Weak Galerkin method~\cite{Lin.Wang.ea:14} of Lin--Wang--Ye;
see also \cite{Cockburn.Gopalakrishnan.ea:09} concerning the passage from Discontinuous Galerkin to hybrid methods.
We also cite here {the mixed method of Behrens--Guzm\'an \cite{Behrens.Guzman:11} based on a system of first-order equations, and the HHO method of} \cite{Chave.Di-Pietro.ea:16}, where the fourth-order operator in the Cahn--Hilliard equations is treated as a system of second-order operators.

A recent approach to the construction of FE spaces with $C^1$-regularity, on the other hand, has been developed in the context of the Virtual Element Method (VEM)~\cite{Antonietti.Beirao-da-Veiga.ea:15,Beirao-da-Veiga.Manzini:14,brezziseoul}.
Here, global continuity requirements are enforced by renouncing an explicit expression of the basis functions at each point, and local contributions are built using computable projections thereof (a stabilization term therefore has to be added).
We refer the reader to~\cite{Brezzi.Marini:13} \cite{chinosidona} for an application of $C^1$-conforming virtual spaces to plate-bending problems similar to the ones considered here.
Nonconforming versions of the VEM have also been developed for fourth order operators, see, e.g., the very recent contributions by Antonietti--Manzini--Verani \cite{antonietti} (including nodal unknowns) and Zhao--Chen--Zhang \cite{zhao} (with $C^0$-continuous virtual functions).

The Kirchhoff--Love plate bending model problem considered in this work reads
\begin{subequations}\label{static_hho}
  \begin{alignat}{2}
    -\div\divb \bm M &= f &\qquad&\tx{ in }\Omega, \label{eq:pde} \\
    u &= 0 &\qquad&\tx{ on }\de\Omega, \label{eq:bc1} \\
    \de_{\bm n} u &= 0 &\qquad&\tx{ on }\de\Omega, \label{eq:bc2}
  \end{alignat}
\end{subequations}
where $\Omega\subset \bb R^2$ denotes a two-dimensional bounded and connected polygonal domain, representing
the middle surface of a plate in its reference configuration, and the divergence operator is denoted by div or $\mathbf{div}$, as to whether it acts on vector- or tensor-valued fields, respectively.
In \eqref{eq:pde}, $f$ represents a surface load orthogonal to the plane of the plate, and $\bm M$ is the \emph{moment tensor}, a second-order symmetric tensor field related to the scalar unknown $u$, the \emph{deflection} of the plate, by the constitutive law
$$
\bm M= - \bb A\grad^2 u,
$$
where $\bb A$ is a fourth-order, symmetric and uniformly elliptic tensor field, and $-\grad^2 u$ is referred to as the \emph{curvature tensor}.
For the sake of simplicity, we assume in what follows that $\bb A$ is piecewise constant on a finite polygonal partition \begin{equation}\label{eq:POmega}{P_\Omega=\{\Omega_i\,:\,i\in I\}}\end{equation} of $\Omega$, and that $f\in L^2(\Omega)$.
Variational formulations are classical for problem \eqref{static_hho}.
For $X\subset\overline{\Omega}$, we denote by $(\cdot,\cdot)_X$ the scalar product in $L^2(X)$, $L^2(X)^2$ or
$L^2(X)^{2\times2}$, depending on the context, and by $\|{\cdot}\|_X$ the associated norm; we omit the subscript $X$ whenever $X=\Omega$.
The \textit{primal} variational formulation of problem \eqref{static_hho} reads:
Find $u\in H^2_0(\Omega)$ such that
\begin{equation}\label{varform_hho}
  (\bb A\grad^2 u, \grad^2 v) \eqcolon a(u,v) = (f,v)\qquad\forall v\in H^2_0(\Omega).
\end{equation}
Owing to the Lax--Milgram Lemma, problem \eqref{varform_hho} is well-posed (see, e.g., \cite{necas,brezzifortin}).
%{%
%and the solution to \eqref{varform_hho} minimizes the following \emph{energy}:
%\begin{equation}\label{cont_en}
%H^2_0(\Omega) \ni u \mapsto E(u) \coloneq \frac{1}{2}a(u,u) - (f,u) \in \bb R.
%\end{equation}
%}%

In this work, we propose and analyze a novel Hybrid High-Order (HHO) method for the approximation of problem \eqref{varform_hho} which sits at the far end of the spectrum of nonconforming methods, since the underlying space does not even embed $C^0$-continuity.
HHO methods, introduced in \cite{Di-Pietro.Ern:15} in the context of quasi-incompressible linear elasticity, are a class of new-generation discretization methods for partial differential equations with several advantageous features.
The most relevant in the context of plate bending problems are:
\begin{inparaenum}[(i)]
\item the support of arbitrary approximation orders on general polygonal meshes;
\item the reproduction of key continuous properties (such as, e.g., local equilibrium relations) at the discrete level;
\item {competitive} computational cost thanks to static condensation and compact stencil.
\end{inparaenum}
We refer the reader to \cite{Di-Pietro.Tittarelli:17} for an introduction covering the salient aspects of HHO methods for linear and nonlinear problems.

The HHO method for problem \eqref{varform_hho} is formulated in terms of discrete unknowns defined on mesh faces and elements (whence the term \emph{hybrid}), and such unknowns are polynomials of arbitrary degree $k \ge 1$ (whence the expression \emph{high-order}).
The construction is conceived so that only face-based unknowns are globally coupled, whereas element-based unknowns can be eliminated by static condensation; see Remark \ref{rem:implementation} below for further details.
Element-based unknowns play the role of the deflection $u$ inside elements, whereas face unknowns play the role of the traces of $u$ and of its gradient on faces.
From these unknowns, a reconstruction of the deflection of degree $(k+2)$ is obtained by solving a local problem inside each element.
This reconstruction is conceived so that, composed with a local reduction map, it coincides with the {local energy projector} and, as such, has optimal approximation properties in the space of polynomials of total degree (up to) $(k+2)$; see Theorem \ref{thm:err.est} below, whose proof hinges on the recent results of \cite{Di-Pietro.Droniou:17}.
The high-order deflection reconstruction is used to formulate a local contribution, which includes a carefully tailored stabilization term.
The role of the latter is to ensure coercivity with respect to a $H^2$-like seminorm while, at the same time, preserving the approximation properties of the local deflection reconstruction.

An extensive convergence analysis of the method is carried out.
Specifically, in Theorem~\ref{thm:err.est} below we prove convergence in $h^{k+1}$ (with $h$ denoting, as usual, the meshsize) in an energy-like norm and, in Theorem \ref{L2:err.est} below, a sharp estimate in $h^{k+3}$ for the $L^2$-norm under biharmonic regularity assumptions.
The latter result highlights a salient feature of HHO methods, namely the fact that, by construction, element-based unknowns superconverge to the $L^2$-orthogonal projection of the exact solution on general meshes.
As this happens by design (i.e., this behavior is not serendipitous), this phenomenon is henceforth referred to as \emph{supercloseness} rather than \emph{superconvergence}.
We also show that the method satisfies locally inside each element a discrete version of the principle of virtual work with moments and shear forces obeying a law of action and reaction.
The performance of the method is showcased on numerical examples, including a study of the robustness with respect to the choice of the stabilization.

The rest of the paper is organized as follows.
In Section~\ref{sec:discset} we introduce the discrete setting: regularity for polygonal meshes, basic results thereon, and local projectors.
A novel general result contained in this section is Theorem \ref{thm:approx.biell}, where optimal approximation properties for the {local energy projector} on local polynomial spaces are studied. The proof of this theorem is given in Section \ref{sec:approx.biell}.
In Section \ref{sec:hho} we introduce the HHO method, state the main results corresponding to Theorems \ref{thm:err.est} and \ref{L2:err.est}, and provide a few numerical examples.
In Section \ref{sec:acreac} we prove the local equilibrium properties of the HHO method and identify discrete equilibrated counterparts of moments and shear forces at interfaces.
Section \ref{sec:analysis} collects the technical proofs of the properties of the discrete bilinear form relevant to the analysis. Conclusions and perspectives are discussed in Section \ref{sec:conclusioni}.
%Finally, Appendix \ref{sec:approx.biell} contains the proof of Theorem \ref{thm:approx.biell}.

%------------------------------------------------------------------------------%
%------------------------------------------------------------------------------%

\section{Discrete setting}\label{sec:discset}

In this section we introduce some assumptions on the mesh, recall a few known results, and define two projectors on local polynomial spaces that will play a key role in the analysis of the method.

\subsection{Mesh}\label{sec:discset:mesh}
The HHO method is built upon a polygonal mesh of the domain $\Omega$ defined prescribing a \emph{set of elements} $\cc T_h$ and a \emph{set of faces} $\cc F_h$.

The set of elements $\cc T_h$ is a finite collection of open disjoint polygons $T$ with nonzero area such that $\overline\Omega = \bigcup_{T\in \cc T_h} \overline T$ and $h\coloneq \max_{T\in \cc T_h} h_T$, with $h_T$ denoting the diameter of $T$.
The set of faces $\cc F_h$ is a finite collection of open disjoint line segments in $\overline{\Omega}$ with nonzero length such that, for all $F\in\cc F_h$, (i) either there exist two distinct mesh elements $T_1,T_2\in\cc T_h$ such that $F\subset\partial T_1\cap\partial T_2$ (and $F$ is called an {\em interface}) or (ii) there exists a mesh element $T\in\cc T_h$ such that $F\subset\partial T\cap\partial\Omega$ (and $F$ is called a {\em boundary face}).
We assume that $\cc F_h$ is a partition of the mesh skeleton in the sense that $\bigcup_{T\in\cc T_h}\de T=\bigcup_{F\in\cc F_h} \overline{F}$.

We denote by $\cc F_h^{\rm i}$ the set of all interfaces and by $\cc F_h^{\rm b}$ the set of all boundary faces, so that $\cc F_h = \cc F_h^{\rm i} \cup \cc F_h^{\rm b}$. The length of a face $F\in \cc F_h$ is denoted by $h_F$. For any $T \in \cc T_h$, $\cc F_T$ is the set of faces that lie on $\partial T$ (the boundary of $T$) and, for any $F\in \cc F_T$, $\bm n_{TF}$ is the unit normal to $F$ pointing out of $T$. Symmetrically, for any $F \in \cc F_h$, $\cc T_F$  is the set containing the mesh elements sharing the face $F$ (two if $F$ is an interface, one if $F$ is a boundary face).

The notion of {\em geometric regularity} for polygonal meshes is more subtle than for standard meshes.
To formulate it, we assume the existence of a {\em matching simplicial submesh}, meaning that there is a conforming triangulation $\mathfrak{T}_h$ of the domain such that each mesh element $T\in\cc T_h$ is decomposed into a finite number of triangles from $\mathfrak{T}_h$ and each mesh face $F\in\cc F_h$ is decomposed into a finite number of edges from the skeleton of $\mathfrak{T}_h$.
We denote by $\varrho$ the {\em regularity parameter} such that (i) for any triangle $S\in\mathfrak{T}_h$ of diameter $h_S$ and inradius $r_S$, $\varrho h_S\le r_S$ and (ii) for any mesh element $T\in\cc T_h$ and any triangle $S\in\mathfrak{T}_h$ such that $S\subset T$, $\varrho h_T \le h_S$.
When considering refined mesh sequences, the regularity parameter should remain bounded away from zero.

In what follows, we also assume that the mesh is {\em compliant with the data}, i.e., for each mesh element $T\in\cc T_h$ there exist a \emph{unique} polygon $\Omega_i\in P_\Omega$ {(see \eqref{eq:POmega})} such that $T\subset\Omega_i$.
As a result, the material tensor field $\bb A$ is \emph{element-wise constant}, and we set for the sake of brevity
$$\bb A_T \coloneq \bb A_{| T}\qquad\forall T\in \cc T_h.$$
We also denote by $\cc A_T^-$ and $\cc A_T^+$, respectively, the smallest and largest eigenvalues of $\bb A_T$, regarded as an endomorphism
of $\bb R^{2\times2}_{\mr{sym}}$. For $l\ge 0$ we also introduce, for later use, the \emph{broken Sobolev space}
\begin{equation}\label{H_ell_brise}
H^l(\cc T_h)\coloneq\left\{v\in L^2(\Omega)\,:\, v_{|T}\in H^l(T)\quad\forall T\in\cc T_h\right\},
\end{equation}
equipped, unless noted otherwise, with the broken norm $\|{\cdot}\|_{H^l(\cc T_h)}$ defined by
\begin{equation}\label{norme_brisee}
\forall v\in H^{l}(\cc T_h), \quad \|v\|_{H^{l}(\cc T_h)}\coloneq\left(\sum_{T\in\cc T_h}\|v\|_{H^{l}(T)}^2\right)^{\nicefrac12} \!\! .
\end{equation}

\subsection{Basic results}
We next recall a few geometric and functional inequalities, whose proofs are straightforward adaptations of the results collected in \cite[Chapter 1]{Di-Pietro.Ern:12} (where a slightly different notion of mesh faces is considered).
For any mesh element $T\in\cc T_h$ and any face $F\in\cc F_T$ it holds that
\begin{equation}\label{meshreg}
\varrho^2 h_T \le h_F \le h_T,
\end{equation}
which expresses the fact that we are working on isotropic meshes.
Moreover, the maximum number of faces of a mesh element is uniformly bounded:
There is an integer $N_\partial\ge 3$ only depending on $\varrho$ such that
\begin{equation}\label{eq:bnd.faces}
  \max_{h\in\cc H}\max_{T\in\cc T_h}\card(\cc F_T)\le N_\partial.
\end{equation}
Let a polynomial degree $l\ge 0$ be fixed, let $X$ be a mesh element or face, and denote by $\bb P^l(X)$ the space spanned by the restrictions to $X$ of two-variate polynomials of total degree at most $l\ge 0$. 
There exist three real numbers $C_{\rm tr} > 0$, $C_{\rm tr, c} > 0$, and $C_{\rm inv}$ depending on $\varrho$ and possibly on $l$, but independent of $h$, such that
for any $T\in \cc T_h$ and $F\in \cc F_T$, the following discrete trace, continuous trace, and inverse inequalities hold:
\begin{subequations}
\begin{alignat}{2}
\|w\|_F & \le C_{\rm tr}\,h_F^{-\nicefrac{1}{2}}\,\|w\|_T  &\qquad& \forall w \in \bb P^l(T), \label{tr_discr}\\
h_T^{\nicefrac{1}{2}}\|w\|_{\de T} & \le C_{\rm tr, c}\left(\| w\|_T +h_T\|\grad w\|_T\right) &\qquad& \forall w \in H^1(T),\label{tr_cont}\\
\|\grad w\|_T & \le C_{\rm inv}h_T^{-1}\|w\|_T  &\qquad& \forall w \in \bb P^l(T) \label{inv},
\end{alignat}
\end{subequations}
We also recall the following Poincar\'{e} inequality, valid for all $T\in \cc T_h$ and all $ w \in H^1(T) $ such that $ (w,1)_T=0 $:
\begin{equation}\label{poincare}
\|w\|_{T} \le C_{\rm p} h_T \|\grad w\|_T,
\end{equation}
where the real number $C_{\rm p}$ is independent of both $h_T$ and $T$, but possibly depends on $\varrho$ (for instance, $C_{\rm p} = \pi^{-1}$ for convex elements \cite{bebendorf}).
%------------------------------------------------------------------------------%

\subsection{Projectors on local polynomial spaces}

Projectors on local polynomial spaces are an essential ingredient in the construction and analysis of our method.
Let a polynomial degree $l\ge 0$ be fixed, and let $X$ denote a mesh element or face.
The $L^2$-orthogonal projector $\pi_X^l\colon L^2(X) \to \bb P^l (X)$ is such that, for all $v\in L^2(X)$, $\pi_X^l v$ is the unique polynomial satisfying the relation
\begin{equation}\label{L2proj}
(\pi_X^l v-v,w)_X = 0\qquad \forall w \in \bb P^l (X).
\end{equation}
The corresponding vector-valued version, denoted by $\bm{\pi}_X^l$, acts component-wise.
We recall the following approximation results that are a special case of the ones proved in \cite[Lemmas 3.4 and 3.6]{Di-Pietro.Droniou:17*1}:
There exists a real number $C > 0$ independent of $h$, but possibly depending on $\varrho$ and $l$, such that, for all $T \in \cc T_h$, all $s \in\{0,\dots,l+1\}$, and all $v \in H^s(T)$,
\begin{subequations}
\begin{equation}
  | v - \pi_T^l v|_{H^m(T)}  \le C h_T^{s-m} |v|_{H^s(T)}\qquad \forall m \in \{0,\dots,s\}, \label{stima}
\end{equation}
and, if $s\ge 1$,
\begin{equation}
 | v - \pi_T^l v |_{H^m(\de T)} \le C h_T^{s-m-\nicefrac12} |v|_{H^s(T)}\qquad \forall m \in \{0,\dots,s-1\}. \label{stima_bordo}
\end{equation}
\end{subequations}
Here we have set, for any $\pphi \in H^s(T)$, % $|\cdot|_{H^m(T)}$ and $|\cdot|_{H^m(\de T)}$ denote the seminorms in the respective spaces, namely
$$
|\pphi|_{H^m(T)} \coloneq \sum_{\bm \alpha\in\bb N^2,\|\bm \alpha\|_1 = m} \|\de^{\bm\alpha} \pphi\|_{L^2(T)},
\qquad
|\pphi|_{H^m(\de T)} \coloneq \sum_{\bm \alpha\in\bb N^2,\|\bm \alpha\|_1 = m} \|\de^{\bm\alpha} \pphi\|_{L^2(\de T)},
$$
with $m$ respectively as in \eqref{stima} and \eqref{stima_bordo}, $\|\bm\alpha\|_1 \coloneq \alpha_1 + \alpha_2$ and $\de^{\bm \alpha} \coloneq \de_1^{\alpha_1}\de_2^{\alpha_2}$.
Notice that, in the second definition, $\pphi$ and $\de^{\bm\alpha} \pphi$ stand for the \emph{boundary traces} of the function and of its
derivatives up to order $m$, respectively.

Let a mesh element $T\in\cc T_h$ be fixed.
For $u,v\in H^{2}(T)$, we let $a_{|T}(u,v) \coloneq \left(\bb A_T \grad^2 u,\grad^2 v\right)_T$ and introduce the \emph{{local energy projector}} $\varpi_T^l:H^2(T)\to\bb P^l(T)$ such that, for any integer $l\ge 2$ and any function $v\in H^2(T)$,
\begin{equation}\label{eq:biell}
  \text{$a_{|T}(\varpi_T^l v-v, w)_T = 0$ for all $w\in\bb P^l(T)$\quad~and\quad$\pi_T^1 (\varpi_T^l v-v)= 0$.}
\end{equation}
Optimal approximation properties for the {local energy projector} are stated in the following theorem, whose proof is given in Section \ref{sec:approx.biell}.

\begin{theorem}[Optimal approximation properties of the {local energy projector}]\label{thm:approx.biell}
  There is a real number $C>0$ independent of $h$, but possibly depending on $\bb A$, $\varrho$ and $l$, such that, for all $T\in\cc T_h$, all $s\in\{2,\ldots,l+1\}$,
  and all $v \in H^{s}(T)$, it holds
  \begin{subequations}\label{approx_biell}
    \begin{equation}
      |v-\varpi_T^l v |_{H^m(T)} \le C h_T^{s-m} |v|_{H^{s}(T)} \qquad \forall m \in \{0,\dots,s\},
      \label{approx_properties}
    \end{equation}
    and
    \begin{equation}
      |v-\varpi_T^l v|_{H^m(\de T)} \le C h_T^{s-m-\nicefrac12} |v|_{H^{s}(T)} \qquad \forall m \in \{0,\dots,s-1\}.
      \label{approx_properties_b}
    \end{equation}
  \end{subequations}
\end{theorem}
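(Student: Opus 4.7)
The plan is to prove the interior estimate \eqref{approx_properties} by a bootstrap argument starting at the energy level $m=2$, then passing to $m\in\{0,1\}$ via the moment condition $\pi_T^1(\varpi_T^l v - v)=0$, and to $m\in\{3,\ldots,s\}$ by inverse estimates on the polynomial correction; the boundary estimate \eqref{approx_properties_b} then follows from the continuous trace inequality \eqref{tr_cont} applied componentwise to derivatives of $v-\varpi_T^l v$.

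First I would treat the case $m=2$, which is the heart of the argument. Using the uniform ellipticity of $\bb A_T$, write $\cc A_T^- |v-\varpi_T^l v|_{H^2(T)}^2 \le a_{|T}(v-\varpi_T^l v, v-\varpi_T^l v)$. For any $p\in\bb P^l(T)$, the difference $p-\varpi_T^l v$ lies in $\bb P^l(T)$, so by the orthogonality part of \eqref{eq:biell} we have $a_{|T}(v-\varpi_T^l v, p-\varpi_T^l v)=0$; hence $a_{|T}(v-\varpi_T^l v,v-\varpi_T^l v) = a_{|T}(v-\varpi_T^l v, v-p)$, and Cauchy--Schwarz yields $|v-\varpi_T^l v|_{H^2(T)}\le (\cc A_T^+/\cc A_T^-)\,|v-p|_{H^2(T)}$. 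Choosing $p=\pi_T^l v$ (legal since $l\ge 2$) and invoking \eqref{stima} with $m=2$ gives the target bound $|v-\varpi_T^l v|_{H^2(T)}\le C h_T^{s-2}|v|_{H^s(T)}$.

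Next, for $m\in\{0,1\}$ I would exploit the moment condition $\pi_T^1(\varpi_T^l v - v)=0$, which allows one to write $v-\varpi_T^l v = (I-\pi_T^1)(v-\varpi_T^l v)$, and then apply the known $L^2$-projection bound \eqref{stima} to the function $v-\varpi_T^l v\in H^2(T)$, obtaining $|v-\varpi_T^l v|_{H^m(T)}\le C h_T^{2-m}|v-\varpi_T^l v|_{H^2(T)}$; combining with the step above yields the $h_T^{s-m}$ rate. For $m\in\{3,\ldots,s\}$, I would split $v-\varpi_T^l v = (v-\pi_T^l v) + (\pi_T^l v - \varpi_T^l v)$. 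The first piece is controlled directly by \eqref{stima}. For the second piece, observe that it is a polynomial of degree $\le l$, so the inverse inequality \eqref{inv} applied iteratively to its derivatives gives $|\pi_T^l v - \varpi_T^l v|_{H^m(T)}\le C h_T^{2-m}|\pi_T^l v - \varpi_T^l v|_{H^2(T)}$ (with the estimate trivially true when $m>l$, both sides vanishing on one side and being $O(h_T^{s-m})$ on the other); a triangle inequality combined with the $m=2$ case controls $|\pi_T^l v - \varpi_T^l v|_{H^2(T)}$ by $C h_T^{s-2}|v|_{H^s(T)}$, and the rates telescope to $h_T^{s-m}$.

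Finally, the boundary estimate \eqref{approx_properties_b} follows by applying the continuous trace inequality \eqref{tr_cont} to each derivative $\partial^{\bm\alpha}(v-\varpi_T^l v)$ with $\|\bm\alpha\|_1=m$, which is justified because $v-\varpi_T^l v\in H^{m+1}(T)$ whenever $m\le s-1$; summing yields $|v-\varpi_T^l v|_{H^m(\partial T)} \le C h_T^{-\nicefrac12}\bigl(|v-\varpi_T^l v|_{H^m(T)} + h_T|v-\varpi_T^l v|_{H^{m+1}(T)}\bigr)$, and \eqref{approx_properties} closes the estimate at the rate $h_T^{s-m-\nicefrac12}$. I expect the only mildly delicate point is the case $m>l$ in the interior estimate, where the inverse-inequality step must be handled by noting that the polynomial contribution vanishes identically; apart from that, every step reduces to a standard application of results already recalled in Section~\ref{sec:discset}.
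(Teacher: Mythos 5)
Your proof is correct, and for the volumetric estimate \eqref{approx_properties} it takes a genuinely different, more self-contained route than the paper. The paper does not prove \eqref{approx_properties} directly: it invokes the abstract approximation result of \cite[Lemma 3]{Di-Pietro.Droniou:17}, which reduces the matter to establishing three stability bounds, namely $\|\grad^2\varpi_T^l v\|_T\lesssim\|\grad^2 v\|_T$, $\|\grad\varpi_T^l v\|_T\lesssim\|\grad v\|_T+h_T\|\grad^2 v\|_T$, and $\|\varpi_T^l v\|_T\lesssim\|v\|_T+h_T\|\grad v\|_T+h_T^2\|\grad^2 v\|_T$; these are obtained from exactly the two ingredients you use, i.e., the quasi-optimality of $\varpi_T^l$ in the energy seminorm (equivalently, the orthogonality in \eqref{eq:biell} plus ellipticity of $\bb A_T$) together with the $H^2$-stability of $\pi_T^l$, and the closure condition $\pi_T^1(\varpi_T^l v-v)=0$ combined with \eqref{stima} for $\pi_T^1$. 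Your version bypasses the external lemma: the case $m=2$ is a C\'ea-type argument plus \eqref{stima}; the cases $m\in\{0,1\}$ follow from writing $v-\varpi_T^l v=(I-\pi_T^1)(v-\varpi_T^l v)$ and applying \eqref{stima} with degree $1$, $s=2$; and the cases $m\in\{3,\dots,s\}$ follow from the splitting $v-\varpi_T^l v=(v-\pi_T^l v)+(\pi_T^l v-\varpi_T^l v)$ with iterated inverse inequalities \eqref{inv} on the polynomial part, which indeed vanishes identically in $|{\cdot}|_{H^m(T)}$ when $m>l$. All these steps are sound. What the paper's route buys is generality and brevity (the cited lemma yields the estimates for all $m$ at once, and in fact in a $W^{s,p}$ setting, for any polynomially invariant projector with the above boundedness); what your route buys is independence from the external reference, at the price of a case-by-case treatment of $m$. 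The boundary estimate \eqref{approx_properties_b} is proved exactly as in the paper, via the continuous trace inequality \eqref{tr_cont} applied to $\de^{\bm\alpha}(v-\varpi_T^l v)$ and the volumetric bounds at orders $m$ and $m+1$. One cosmetic remark: estimating in the energy norm $\|\bb A_T^{\nicefrac12}\grad^2({\cdot})\|_T$ rather than splitting off $\cc A_T^+$ and $\cc A_T^-$ separately would improve your constant from $\cc A_T^+/\cc A_T^-$ to its square root, matching the dependence announced in the paper's remark following the theorem.
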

\begin{remark}[Dependence on the material tensor]
It can be checked that the constant $C$ in the right-hand side of \eqref{approx_biell} actually depends
on $\bb A_T$ only through the square root of the ratio between $\cc A_T^+$ and $\cc A_T^-$.
\end{remark}
%------------------------------------------------------------------------------%

\section{The Hybrid High-Order method}\label{sec:hho}

In this section, we present the construction underlying the HHO method, state the discrete problem, and discuss the main results. {Henceforth, we fix \emph{once and for all} a polynomial degree $k \ge 1$.}

\subsection{Local discrete unknowns and interpolation}

Let a mesh element $T\in\cc T_h$ be fixed. The \emph{local space of discrete unknowns} is defined as the set
\begin{equation}\label{local_ddf}
\ul{\mr U}_T^k \coloneq \bb P^{k} (T) \times \left(
\bigtimes_{F\in \cc F_T}  \bb P^{k}(F)^2\right)
\times \left(
\bigtimes_{F \in \cc F_T} \bb P^{{k}} (F)
\right).
\end{equation}
For a general collection of discrete unknowns $\ul{\mr v}_T \in \ul{\mr U}_T^k$, we use the standard underlined HHO notation
$$
\ul{\mr v}_T = (v_T, \left(\bm v_{\grad,F}\right)_{F\in\mathcal{F}_T},(v_F)_{F\in\mathcal{F}_T}),
$$
where $v_T$ contains the element-based discrete unknowns, $\bm v_{\grad,F}$ the discrete unknowns related to the trace of the gradient on the face $F$, and $v_F$ the discrete unknowns related to the trace on $F$.

The \emph{local interpolation operator} $\ul{\mr I}_T^k \colon H^2(T) \to \ul{\mr U}_T^k$ is such that, for all $v \in H^2(T)$,
\begin{equation}\label{interpolation_locale}
  \ul{\mr I}_T^k v\coloneq\left(
  \pi^k_T  v, (\bm{\pi}_F^{k}(\grad v)_{| F})_{F\in \cc F_T}, (\pi_F^{{k}} (v_{| F} ))_{F\in \cc F_T}
  \right).
\end{equation}
Since the boundary of $T$ is piecewise smooth (see, e.g., \cite{necas}), the trace theorem ensures that the restrictions $v_{| F}$ and $(\grad v)_{| F}$ of $v$ appearing in~\eqref{interpolation_locale} are both well-defined.

%------------------------------------------------------------------------------%

\subsection{Local deflection reconstruction}\label{sec:locrec}

Let again a mesh element $T\in\cc T_h$ and a polynomial degree $k \ge 1$ be fixed.
We introduce the \emph{local deflection reconstruction operator} $\pT \colon \ul{\mr U}_T^k \to \bb P^{k+2}(T)$ such that, for all $\ul{\mr v}_T\in  \ul{\mr U}_T^k$, $\pT \ul{\mr v}_T \in \bb P^{k+2}(T)$ satisfies for all $w \in \bb P^{k+2}(T)$
\begin{multline}\label{reconstruction_locale}
  a_{|T}(\pT \ul{\mr v}_T,w)= \\
  -(v_T,\div\divb\bm M_{w,T})_T
  -\sum_{F\in\cc F_T} \left(\bm v_{\grad,F},\bm M_{w,T}\bm n_{TF}\right)_F
  +\sum_{F\in \cc F_T} \big(v_F, \divb \bm M_{w,T} \cdot \bm n_{TF}\big)_F,
\end{multline}
where $\bm M_{w,T} \coloneq -\bb A_T\grad^2 w$.
Here, the notation $\bm M_{w,T}$ is used to emphasize the fact that $\bm M_{w,T}$ is a moment tensor of \emph{virtual}
nature (with space of virtual deflections equal to $\bb P^{k+2}(T)$) unlike tensor $\bm M$
appearing in bilinear form $a$ introduced in \eqref{varform_hho}.
The right-hand side of~\eqref{reconstruction_locale} is conceived so as to resemble an integration by parts formula where the roles of the function represented by $\ul{\mr v}_T$ and of its gradient are played by element discrete unknowns inside volumetric integrals and by face-based discrete unknowns on boundary integrals.

Since ${\ker}\grad^2=\bb P^1(T)$, the compatibility condition for problem~\eqref{reconstruction_locale} requires that the linear form on the right-hand side vanish on the elements of $\bb P^1(T)$; since $\bm M_{w,T} = \bm 0$ for all $w \in \bb P^1(T)$, this condition is satisfied.
The solution of \eqref{reconstruction_locale} is not unique: if $\pT \ul{\mr v}_T \in \bb P^{k+2}(T)$ is
a solution, $\pT \ul{\mr v}_T+z_T$ for any $z_T \in \bb P^1(T)$ also is.
To ensure uniqueness, we add the \emph{closure condition}
\begin{equation}\label{fermeture}
\pi^1_T \pT \ul{\mr v}_T = \pi^1_T v_T.
\end{equation}
Notice, in passing, that element discrete unknowns do not contribute to the right-hand side of~\eqref{reconstruction_locale} for $k=1$, and they only appear in the closure condition~\eqref{fermeture}.

For further use, we also observe that, since $v_T$ is smooth, performing an integration by parts on the first term in the right-hand side of \eqref{reconstruction_locale} and using the symmetry of $\bb A_T$ leads to the following reformulation, which points out the non-conformity of the method:
\begin{multline}\label{reconstruction_locale2}
  a_{|T}(\pT \ul{\mr v}_T,w)=\\
  a_{|T}( v_T,w)
  - \sum_{F\in\cc F_T} \left(\bm v_{\grad,F}-\grad v_T,\bm M_{w,T}\bm n_{TF}\right)_F
  + \sum_{F\in \cc F_T} \big(v_F-v_T, \divb \bm M_{w,T} \cdot \bm n_{TF}\big)_F.
\end{multline}

The definition of $\pT$ is justified by the following proposition, which establishes a link with the {local energy projector} defined
by~\eqref{eq:biell}.
\begin{proposition}[Link with the {local energy projector}]\label{polynomial_consistency}
  It holds
  \begin{equation}\label{eq:pTIT=varpi}
    \pT\circ\ul{\mr I}_T^k = \varpi_T^{k+2}.
  \end{equation}
\end{proposition}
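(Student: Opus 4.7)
The plan is to verify that $\pT \ul{\mr I}_T^k v$ satisfies the two defining relations of $\varpi_T^{k+2} v$ in \eqref{eq:biell}, so that $\pT \ul{\mr I}_T^k v = \varpi_T^{k+2} v$ follows from the uniqueness of the energy projector (which is itself a consequence of Lax--Milgram applied on $\bb P^{k+2}(T)/\bb P^1(T)$, endowed with the norm $v \mapsto |v|_{H^2(T)}$, once the closure condition fixes the $\bb P^1$-component).

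First I would handle the closure-type condition $\pi_T^1(\pT \ul{\mr I}_T^k v - v)=0$. By the closure relation \eqref{fermeture} applied to $\ul{\mr v}_T = \ul{\mr I}_T^k v$, we have $\pi_T^1 \pT \ul{\mr I}_T^k v = \pi_T^1 (\pi_T^k v)$. Since $k\ge 1$, the inclusion $\bb P^1(T) \subset \bb P^k(T)$ yields $\pi_T^1 \pi_T^k = \pi_T^1$, so $\pi_T^1 \pT \ul{\mr I}_T^k v = \pi_T^1 v$, as required.

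The main step is to check that $a_{|T}(\pT \ul{\mr I}_T^k v - v, w) = 0$ for all $w \in \bb P^{k+2}(T)$. I would plug $\ul{\mr v}_T = \ul{\mr I}_T^k v = (\pi_T^k v, (\bm\pi_F^k (\grad v)_{|F})_F, (\pi_F^k v_{|F})_F)$ into \eqref{reconstruction_locale}. The key observation is a polynomial-degree count on the test quantities: for $w\in \bb P^{k+2}(T)$ and $\bb A_T$ constant on $T$, the virtual moment $\bm M_{w,T} = -\bb A_T \grad^2 w$ lies in $\bb P^k(T)^{2\times 2}$; consequently $\div\divb \bm M_{w,T}\in\bb P^{k-2}(T)$, and for every $F\in\cc F_T$ the face traces $\bm M_{w,T}\bm n_{TF|F}$ and $\divb \bm M_{w,T}\cdot\bm n_{TF|F}$ lie in $\bb P^k(F)^2$ and $\bb P^{k-1}(F)$, respectively. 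Hence the defining orthogonality properties \eqref{L2proj} of $\pi_T^k$, $\bm\pi_F^k$, and $\pi_F^k$ allow one to replace $\pi_T^k v$, $\bm\pi_F^k (\grad v)_{|F}$, and $\pi_F^k v_{|F}$ in~\eqref{reconstruction_locale} by $v$, $(\grad v)_{|F}$, and $v_{|F}$, respectively.

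Once the projectors have been removed, I would recognize the right-hand side as the result of integrating by parts twice in $(\bm M_{w,T},\grad^2 v)_T$. Specifically, since $v\in H^2(T)$ and $\bm M_{w,T}$ is smooth and symmetric, a standard double integration by parts gives
\begin{equation*}
(\bm M_{w,T},\grad^2 v)_T = (\div\divb \bm M_{w,T}, v)_T + \sum_{F\in\cc F_T}\Bigl[(\bm M_{w,T}\bm n_{TF},\grad v)_F - (\divb\bm M_{w,T}\cdot\bm n_{TF}, v)_F\Bigr].
\end{equation*}
Rearranging and using $\bm M_{w,T} = -\bb A_T\grad^2 w$ together with the symmetry of $\bb A_T$, the right-hand side of \eqref{reconstruction_locale} (with projectors stripped) equals $-(\bm M_{w,T},\grad^2 v)_T = (\bb A_T\grad^2 w,\grad^2 v)_T = a_{|T}(v,w)$. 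This yields $a_{|T}(\pT\ul{\mr I}_T^k v, w) = a_{|T}(v,w)$ for all $w\in\bb P^{k+2}(T)$, and combining with the $\pi_T^1$-relation established above concludes the proof by uniqueness. I do not expect any serious obstacle: the construction of the right-hand side of \eqref{reconstruction_locale} is tailored precisely to emulate this integration-by-parts identity, and the only mild subtlety is the degree bookkeeping that guarantees the $L^2$-projectors can be removed against the chosen test fields.
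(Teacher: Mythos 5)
Your proof is correct and follows essentially the same route as the paper: plug $\ul{\mr I}_T^k v$ into \eqref{reconstruction_locale}, use the degree count on $\bm M_{w,T}$ and its traces to strip the $L^2$-projectors via \eqref{L2proj}, integrate by parts to obtain the orthogonality $a_{|T}(\pT\ul{\mr I}_T^k v - v, w)=0$, and conclude with the closure condition \eqref{fermeture} and the definition \eqref{eq:biell}. Your explicit treatment of $\pi_T^1\pi_T^k=\pi_T^1$ (using $k\ge 1$) and of uniqueness of the energy projector merely spells out what the paper leaves implicit.
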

\begin{proof}
  We write~\eqref{reconstruction_locale} for $\ul{\mr v}_T=\ul{\mr I}_T^k v$ (cf.~\eqref{interpolation_locale} for the definition of the local interpolator).
  Since $w \in \bb P^{k+2}(T)$ and $\bb A_T$ is a constant tensor, we infer that
  $$
  \div\divb \bm M_{w,T} \in \bb P^{k-2}(T)\subset \bb P^{k}(T)
  $$
  and, for all $F\in\cc F_T$,
  $$
  (\bm M_{w,T})_{|F} \bm n_{TF}\in \bb P^k(F)^2,\quad
  (\divb \bm M_{w,T})_{|F} \cdot \bm n_{TF}  \in \bb P^{k-1}(F) \subset \bb P^{k}(F).
  $$
  Consequently, recalling the definition~\eqref{L2proj} of $\pi^k_T  $, $\bm{\pi}_F^k$, and $\pi_F^k$, we have
  $$
  \begin{alignedat}{2}
    (\pi^k_T   v,\div\divb\bm M_{w,T})_T & = (v, \div\divb\bm M_{w,T})_T,
    \\
    (\bm{\pi}_F^{k}(\grad v)_{| F},\bm M_{w,T}\bm n_{TF})_F & = ((\grad v)_{| F},\bm M_{w,T}\bm n_{TF})_F,
    \\
    (\pi_F^{k} v_{|F}, \divb \bm M_{w,T} \cdot \bm n_{TF})_F & = (v _{| F}, \divb \bm M_{w,T} \cdot \bm n_{TF})_F .
  \end{alignedat}
  $$
  Plugging the above identities into the right-hand side of~\eqref{reconstruction_locale}, performing an integration by parts, and using the symmetry of $\bb A_T$, we arrive at the following \emph{orthogonality condition}:
  \begin{equation}\label{orthogonality}
    a_{|T}(\pT \ul{\mr I}_T^k v-v,w)  = 0.
  \end{equation}
  Comparing~\eqref{orthogonality} and \eqref{fermeture} with the definition~\eqref{eq:biell} of $\varpi_T^{k+2}$ concludes the proof.
\end{proof}

\begin{remark}[Approximation properties for $\pT\circ\ul{\mr I}_T^k$]
\label{pT_circ_IT}
The above result implies that $\pT\circ\ul{\mr I}_T^k=\varpi_T^{k+2}$ has optimal approximation properties in $\bb P^{k+2}(T)$, in the sense made precise by Theorem~\ref{thm:approx.biell}.
\end{remark}

%------------------------------------------------------------------------------%

\subsection{Local contribution}\label{sec:loccontr}
We introduce the local discrete bilinear form $\mr{a}_{T}(\cdot,\cdot)$ on $\ul{\mr U}_T^k \times \ul{\mr U}_T^k$ given by
\begin{equation}\label{forma_locale}
\mr{a}_{T} (\ul{\mr u}_T, \ul{\mr v}_T) \coloneq
a_{| T}( \pT \ul{\mr u}_T,\pT \ul{\mr v}_T) + \mr{s}_T(\ul{\mr u}_T, \ul{\mr v}_T).
\end{equation}
Here, the first contribution is the usual Galerkin term responsible for consistency.
The second contribution, in charge of stability, penalizes high-order differences between the reconstruction and the unknowns and is such that, for all $ (\ul{\mr u}_T,\ul{\mr v}_T) \in \ul{\mr U}_T^k \times \ul{\mr U}_T^k $,
\begin{equation}\label{stabiliz}
  \begin{aligned}
\mr{s}_T(\ul{\mr u}_T,\ul{\mr v}_T)&\coloneq
%&
\frac{\cc A^+_T}{h_T^4}\Big(\pi^k_T  (\pT\ul{\mr u}_T - u_T),\pi^k_T  (\pT\ul{\mr v}_T - v_T) \Big)_T \\
%\hspace{1.5cm} 
&\quad + \frac{\cc A^+_T}{h_T} \sum_{F\in \cc F_T} \Big(\bm\pi^k_F(\grad \pT\ul{\mr u}_T - \bm u_{\grad,F}),
\bm\pi^k_F(\grad \pT\ul{\mr v}_T - \bm v_{\grad,F}) \Big)_F \\
&\quad + \frac{\cc A^+_T}{ h_T^3} \sum_{F\in \cc F_T} \Big( \pi^k_F(\pT\ul{\mr u}_T - u_F), \pi^k_F(\pT\ul{\mr v}_T - v_F)\Big) _F.
  \end{aligned}
\end{equation}
\begin{remark}[Stabilization]
  Other expressions are possible for the stabilization term, and the specific choice can affect the accuracy of the results.
  In particular, the discussion below remains true if we replace \eqref{forma_locale} by
  \begin{equation}\label{forma_locale.eta}
    \mr{a}_{T} (\ul{\mr u}_T, \ul{\mr v}_T) \coloneq
    a_{| T}( \pT \ul{\mr u}_T,\pT \ul{\mr v}_T) + \eta\mr{s}_T(\ul{\mr u}_T, \ul{\mr v}_T),
  \end{equation}
  with $\eta>0$ denoting a user-dependent parameter independent of $h$.
  In practice, it is important that the numerical results be only marginally affected by the specific choice of the stabilization.
  We refer the reader to Section \ref{sec:numerical.examples} below for a numerical study of the robustness of the method with respect to $\eta$.
\end{remark}
The following proposition states a consistency result for the stabilization bilinear form \eqref{stabiliz}.
\begin{proposition}[Consistency of $\mr s_T$] There is a real number $C>0$ independent of $h$, but possibly depending on $\bb A$, $\varrho$ and $k$, such that, for all $v\in H^{k+3}(T)$,
\begin{equation}\label{consistenza_s_T}
\mr s_T(\ul{\mr I}^k_T v, \ul{\mr I}^k_T v)^{\nf{1}{2}} \le C h_T^{k+1} |v|_{H^{k+3}(T)}.
\end{equation}
\end{proposition}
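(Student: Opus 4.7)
The plan is to reduce the three summands in $\mr s_T(\ul{\mr I}^k_T v, \ul{\mr I}^k_T v)$ to approximation estimates for the local energy projector $\varpi_T^{k+2}$, and then invoke Theorem~\ref{thm:approx.biell} with $l=k+2$ and $s=k+3$. The key observation unlocking the argument is Proposition~\ref{polynomial_consistency}, which yields $\pT\ul{\mr I}^k_T v = \varpi_T^{k+2}v$. Together with the explicit form of $\ul{\mr I}^k_T v$ in~\eqref{interpolation_locale}, this lets us rewrite each of the three contributions in~\eqref{stabiliz} in terms of differences between $\varpi_T^{k+2}v$ and the various projections of $v$.

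Next, I will exploit the idempotence and linearity of the $L^2$-orthogonal projectors to simplify these differences. Concretely, for the element term I would use $\pi_T^k\circ\pi_T^k=\pi_T^k$ to write
\[
\pi_T^k\bigl(\varpi_T^{k+2}v - \pi_T^k v\bigr) = \pi_T^k\bigl(\varpi_T^{k+2}v - v\bigr),
\]
and analogously $\bm\pi_F^k(\grad\varpi_T^{k+2}v - \bm\pi_F^k(\grad v)_{|F}) = \bm\pi_F^k(\grad(\varpi_T^{k+2}v - v))_{|F}$ and $\pi_F^k(\varpi_T^{k+2}v - \pi_F^k v_{|F}) = \pi_F^k((\varpi_T^{k+2}v - v)_{|F})$ for the face terms. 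Bounding projections by the original quantities via $L^2$-stability of $\pi_T^k, \bm\pi_F^k, \pi_F^k$, one reduces to controlling $\|\varpi_T^{k+2}v - v\|_T$, $|\varpi_T^{k+2}v - v|_{H^1(\partial T)}$, and $\|\varpi_T^{k+2}v - v\|_{\partial T}$.

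Finally, applying Theorem~\ref{thm:approx.biell} with $l=k+2$ and $s=k+3$ yields respectively
\[
\|\varpi_T^{k+2}v - v\|_T \le C h_T^{k+3}|v|_{H^{k+3}(T)},\quad
|\varpi_T^{k+2}v - v|_{H^m(\partial T)} \le C h_T^{k+5/2-m}|v|_{H^{k+3}(T)}\ (m=0,1).
\]
Combining these with the prefactors $\cc A_T^+ h_T^{-4}$, $\cc A_T^+ h_T^{-1}$, and $\cc A_T^+ h_T^{-3}$ in~\eqref{stabiliz}, each of the three contributions is bounded by $C\,\cc A_T^+ h_T^{2(k+1)}|v|^2_{H^{k+3}(T)}$; summing and taking square roots delivers~\eqref{consistenza_s_T}, with a constant $C$ depending on $\bb A$ (through $\cc A_T^+$), $\varrho$, and $k$, while the uniform bound~\eqref{eq:bnd.faces} on the number of faces ensures that the face sums do not pollute the $h$-dependence.

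There is no real analytic obstacle in this proof: the argument is a matter of bookkeeping once Proposition~\ref{polynomial_consistency} is invoked. The only point that requires care is verifying that the scaling $h_T^{k+1}$ emerges consistently from all three terms, and that Theorem~\ref{thm:approx.biell} is applied within its admissible range (which is met since $s=k+3=l+1$ and the required $m$ lie in $\{0,\ldots,s-1\}$ for the boundary estimates).
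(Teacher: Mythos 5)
Your proposal is correct and follows essentially the same route as the paper: identify $\pT\ul{\mr I}^k_T v=\varpi_T^{k+2}v$ via Proposition~\ref{polynomial_consistency}, simplify each stabilization term using linearity and idempotency of the $L^2$-projectors, and conclude by their $L^2$-boundedness together with Theorem~\ref{thm:approx.biell} applied with $s=k+3$ and $m=0$ (element term), $m=1$ and $m=0$ (boundary terms). The scaling bookkeeping you carry out matches the paper's argument exactly.
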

\begin{proof}
We have $$\mr s_T(\ulr I^k_T v, \ulr I^k_T v) = \mathfrak T_1 + \mathfrak T_2 + \mathfrak T_3,$$ where, recalling Proposition~\ref{polynomial_consistency} and using the linearity and idempotency of projectors,
$$
\begin{aligned}
\mathfrak T_1 & \coloneq \frac{\cc A^+_T}{h_T^{4}} \| \pi^k_T   ( \varpi^{k+2}_T v - \pi^k_T   v)\|_T^2 = 
\frac{\cc A^+_T}{h_T^{4}} \|\pi^k_T   (\varpi^{k+2}_T v - v)\|_T^2,\\
\mathfrak T_2 & \coloneq \frac{\cc A^+_T}{h_T} \sum_{F\in \cc F_T} \| \bm\pi^k_F(\grad \varpi^{k+2}_T v - \bm\pi^k_F(\grad v))\|_F^2 = 
\frac{\cc A^+_T}{h_T} \sum_{F\in\cc F_T} \|\bm\pi^k_F(\grad \varpi^{k+2}_T v - \grad v) \|_F^2 , \\
\mathfrak T_3 & \coloneq \frac{\cc A^+_T}{ h_T^3} \sum_{F\in \cc F_T} \|\pi^k_F(\varpi^{k+2}_T v - \pi^k_F v)\|_F^2 = 
\frac{\cc A^+_T}{ h_T^3} \sum_{F\in \cc F_T} \|\pi^k_F(\varpi^{k+2}_T v - v)\|_F^2.
\end{aligned}
$$
By the boundedness of $L^2$-projectors, along with the approximation properties \eqref{approx_properties}--\eqref{approx_properties_b} of
$\varpi^{k+2}_T$ with $s=k+3$ and, respectively, $m=0$ for $\mathfrak T_1$, $m=1$ for $\mathfrak T_2$, and again $m=0$ for $\mathfrak T_3$, the conclusion follows.
\end{proof}
We equip the space $\ul{\mr U}^k_T$ with the following local discrete seminorm: 
\begin{equation}\label{localseminorm}
\|\ul{\mr v}_T\|_{\bb A,T}^2 \coloneq \|\bb A_T^{\nicefrac{1}{2}} \grad^2 v_T\|_T^2\, + \frac{\cc A^+_T}{h_T} \sum_{F \in \cc F_T} \|\bm v_{\grad,F}-\grad v_T\|_F^2\, +
\frac{\cc A^+_T}{h_T^3} \sum_{F\in \cc F_T}
 \|v_F - v_T\|_F^2.
\end{equation}
The following result shows that the bilinear form $\mr{a}_T$ induces on $\ul{\mr U}_T^k$ a seminorm $\|{\cdot}\|_{\mr{a},T}$ uniformly equivalent to $\|{\cdot}\|_{\bb A,T}$.
\begin{lemma}[Local coercivity and boundedness]\label{coerciv}
  There is a real number $C>0$ independent of $h$, but possibly depending on $\bb A$, $\varrho$ and $k$, such that, for all $T\in\cc T_h$, the following inequalities hold (expressing, respectively, the coercivity and boundedness of $\mr{a}_T$):
  \begin{equation}\label{equivnorm}
    C^{-1} \| \ul{\mr v}_T \|_{\bb A,T}^2 \le
    \|\ul{\mr v}_T\|^2_{\mr{a},T}\coloneq\mr{a}_{T}(\ul{\mr v}_T, \ul{\mr v}_T)
    \le C\| \ul{\mr v}_T \|_{\bb A,T}^2
    \qquad\forall\ul{\mr v}_T \in \ul{\mr U}_T^k.
  \end{equation}
\end{lemma}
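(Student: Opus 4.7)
The proof splits naturally into a boundedness step $\|\ul{\mr v}_T\|^2_{\mr a,T}\le C\|\ul{\mr v}_T\|^2_{\bb A,T}$ and the more delicate coercivity step $\|\ul{\mr v}_T\|^2_{\bb A,T}\le C\|\ul{\mr v}_T\|^2_{\mr a,T}$.

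For boundedness, I would first control the Galerkin contribution: testing~\eqref{reconstruction_locale2} with $w=\pT\ul{\mr v}_T$, applying Cauchy--Schwarz, the discrete trace inequality~\eqref{tr_discr}, and the inverse inequality~\eqref{inv} to the face terms involving $\bm M_{w,T}=-\bb A_T\grad^2 w$ and $\divb\bm M_{w,T}$, I would obtain $\|\bb A_T^{\nf{1}{2}}\grad^2\pT\ul{\mr v}_T\|_T\le C\|\ul{\mr v}_T\|_{\bb A,T}$. Each of the three pieces of $\mr s_T(\ul{\mr v}_T,\ul{\mr v}_T)$ would then be bounded separately, using the $L^2$-contractivity of projectors and a triangle inequality to introduce $v_T$ or $\grad v_T$ as intermediate quantities (so the corresponding face contributions enter $\|\ul{\mr v}_T\|_{\bb A,T}^2$ directly), and estimating the residual polynomial quantity $\pT\ul{\mr v}_T-v_T$ through the discrete Poincar\'e-type inequality $\|q\|_T\le Ch_T^2\|\grad^2 q\|_T$, valid on polynomials $q$ with $\pi_T^1 q=0$---a condition satisfied by $q=\pT\ul{\mr v}_T-v_T$ thanks to the closure relation~\eqref{fermeture}---together with the discrete trace and inverse inequalities.

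For coercivity I would proceed in two sub-steps. I would first establish the reverse estimate
\begin{equation*}
\|\bb A_T^{\nf{1}{2}}\grad^2 v_T\|_T \le \|\bb A_T^{\nf{1}{2}}\grad^2\pT\ul{\mr v}_T\|_T + C|\ul{\mr v}_T|_{\de T},
\end{equation*}
with $|\ul{\mr v}_T|_{\de T}^2\coloneq \frac{\cc A_T^+}{h_T}\sum_{F\in\cc F_T}\|\bm v_{\grad,F}-\grad v_T\|_F^2 + \frac{\cc A_T^+}{h_T^3}\sum_{F\in\cc F_T}\|v_F-v_T\|_F^2$, by testing~\eqref{reconstruction_locale2} with $w=v_T\in\bb P^k(T)\subset\bb P^{k+2}(T)$: the left-hand side becomes $a_{|T}(\pT\ul{\mr v}_T,v_T)$, and the face integrals involving $\bm M_{v_T,T}$ are handled with Cauchy--Schwarz plus~\eqref{tr_discr} and~\eqref{inv}. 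In the second sub-step I would bound $|\ul{\mr v}_T|_{\de T}^2$ by $\mr a_T(\ul{\mr v}_T,\ul{\mr v}_T)$. Exploiting that $\bm v_{\grad,F}\in\bb P^k(F)^2$ and $\grad v_T\in\bb P^{k-1}(F)^2\subset\bb P^k(F)^2$, so that $\bm\pi_F^k\bm v_{\grad,F}=\bm v_{\grad,F}$ and $\bm\pi_F^k\grad v_T=\grad v_T$, I would decompose
\begin{equation*}
\bm v_{\grad,F}-\grad v_T = \bm\pi_F^k(\grad\pT\ul{\mr v}_T-\grad v_T) - \big(\bm\pi_F^k\grad\pT\ul{\mr v}_T-\bm v_{\grad,F}\big),
\end{equation*}
and analogously for $v_F-v_T$. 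The second difference in each decomposition is precisely the face contribution to $\mr s_T$, while the first, after the $L^2$-contractivity of $\bm\pi_F^k$ and a discrete trace inequality on the polynomial $\pT\ul{\mr v}_T-v_T$, reduces via the Poincar\'e-type bound, using $\|\grad^2(\pT\ul{\mr v}_T-v_T)\|_T\le \|\grad^2\pT\ul{\mr v}_T\|_T+\|\grad^2 v_T\|_T$, to already-controlled quantities; the first sub-step then closes the loop through an absorption argument.

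The main obstacle will lie in this final absorption: whenever one converts between $\|\grad^2\cdot\|_T$ and $\|\bb A_T^{\nf{1}{2}}\grad^2\cdot\|_T$ on polynomials, a factor of $\cc A_T^+/\cc A_T^-$ is introduced, so one has to keep track of constants precisely along the chain of trace, inverse, and Poincar\'e estimates; this yields a coercivity constant depending on the eigenvalue ratio $\cc A_T^+/\cc A_T^-$ of $\bb A_T$ through its square root, in agreement with the remark following Theorem~\ref{thm:approx.biell}.
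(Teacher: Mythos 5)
Your boundedness step is sound and essentially coincides with the paper's: taking $w=\pT\ul{\mr v}_T$ in \eqref{reconstruction_locale2} to get $\|\bb A_T^{\nf{1}{2}}\grad^2\pT\ul{\mr v}_T\|_T\lesssim\|\ul{\mr v}_T\|_{\bb A,T}$ is exactly the paper's starting point, and your Poincar\'e-type bound for $\pT\ul{\mr v}_T-v_T$ via the closure relation \eqref{fermeture} is just the approximation property \eqref{stima} of $\pi_T^1$ with $l=1$, $s=2$, equivalent to the paper's insertion of $\pm\pi^1_T\pT\ul{\mr v}_T\mp\pi^1_T v_T$. The coercivity step, however, has a genuine gap. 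Your decomposition of $\bm v_{\grad,F}-\grad v_T$ (and of $v_F-v_T$) leaves the piece $\bm\pi_F^k\grad(\pT\ul{\mr v}_T-v_T)$, which you estimate via trace plus the Poincar\'e-type bound by $h_T^{-\nf{1}{2}}\cdot h_T\bigl(\|\grad^2\pT\ul{\mr v}_T\|_T+\|\grad^2 v_T\|_T\bigr)$, and you then propose to close the loop by absorption. Combining your two sub-steps gives an inequality of the shape $\|\bb A_T^{\nf{1}{2}}\grad^2 v_T\|_T\le\|\bb A_T^{\nf{1}{2}}\grad^2\pT\ul{\mr v}_T\|_T+C\bigl(\mr s_T(\ul{\mr v}_T,\ul{\mr v}_T)^{\nf{1}{2}}+\sqrt{\cc A_T^+}\,\|\grad^2\pT\ul{\mr v}_T\|_T+\sqrt{\cc A_T^+}\,\|\grad^2 v_T\|_T\bigr)$, where $C$ accumulates $C_{\rm tr}$, $C_{\rm inv}$, the Poincar\'e-type constant and the bound $N_\partial$ on $\card(\cc F_T)$. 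Since $\sqrt{\cc A_T^+}\,\|\grad^2 v_T\|_T\ge\|\bb A_T^{\nf{1}{2}}\grad^2 v_T\|_T$ and none of these constants is small or tunable (there is no free parameter multiplying these terms), the last term cannot be absorbed into the left-hand side; the argument does not close. Note also that the boundary seminorm you call $|\ul{\mr v}_T|_{\de T}$ is itself part of $\|\ul{\mr v}_T\|_{\bb A,T}$, so it too must be bounded by $\|\ul{\mr v}_T\|_{\mr a,T}$ without reintroducing $\|\grad^2 v_T\|_T$ with an $O(1)$ constant.

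The way out --- and what the paper does --- is to decompose the boundary differences so that $v_T$ never enters the piece estimated by approximation. The paper inserts $\pm\bm\pi_F^k\grad\bigl(\pT\ul{\mr v}_T-\pi_T^k\pT\ul{\mr v}_T\bigr)$, resp.\ $\pm\pi_F^k\bigl(\pT\ul{\mr v}_T-\pi_T^k\pT\ul{\mr v}_T\bigr)$, producing three pieces: one controlled directly by the face stabilization in \eqref{stabiliz}; one bounded by $h_T^{\nf{1}{2}}\|\grad^2\pT\ul{\mr v}_T\|_T$ via \eqref{stima} with $l=k$, $s=2$ (this is precisely where $k\ge1$ is essential, cf.\ Remark \ref{remark:polynomial_degree}); and one equal to $\grad\pi_T^k(\pT\ul{\mr v}_T-v_T)$, resp.\ $\pi_T^k(\pT\ul{\mr v}_T-v_T)$, on $F$, bounded by the trace \eqref{tr_discr} and inverse \eqref{inv} inequalities by $h_T^{-\nf{3}{2}}\|\pi_T^k(\pT\ul{\mr v}_T-v_T)\|_T$ --- i.e., exactly the volumetric stabilization quantity --- so no absorption is needed. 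Alternatively, your own decomposition can be repaired: estimate $\|\grad(\pT\ul{\mr v}_T-v_T)\|_F$ by trace and inverse inequalities on the polynomial $\pT\ul{\mr v}_T-v_T$ and then split $\pT\ul{\mr v}_T-v_T=\pi_T^k(\pT\ul{\mr v}_T-v_T)+(\mathrm{Id}-\pi_T^k)\pT\ul{\mr v}_T$, whose second summand is $\lesssim h_T^2\|\grad^2\pT\ul{\mr v}_T\|_T$ (again using $k\ge1$); this leaves only the stabilization and the Galerkin norm of $\pT\ul{\mr v}_T$ on the right, as required.
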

\begin{proof}
  See Section~\ref{sec:analysis:well-posedness}.
\end{proof}
{
\begin{remark}[Polynomial degree]
\label{remark:polynomial_degree}
The assumption $k \ge 1$ is \emph{essential} in the proof of the above result. For this reason, the steps in which this hypothesis is used are pointed out accordingly.
\end{remark}
}
%
%------------------------------------------------------------------------------%

\subsection{Global space, interpolation, and norm}
We define the following \emph{global space of discrete unknowns}:
\begin{equation}\label{global_ddf}
\ul{\mr U}_h^k \coloneq \left(\bigtimes_{T\in \cc T_h} \bb P^{k} (T) \right)
\times \left(
\bigtimes_{F\in \cc F_h} \bb P^{k}(F)^2\right)
\times \left(
\bigtimes_{F \in \cc F_h} \bb P^k (F)
\right).
\end{equation}
Note that interface unknowns in $\ul{\mr U}_h^k$ are single-valued, i.e., their values match from one element to the adjacent one.
For a collection of discrete unknowns in $\ul{\mr U}_h^k$, we use the notation
$$
\ul{\mr v}_h = \left((v_T)_{T\in \cc T_h}, (\bm v_{\grad,F})_{F\in \cc F_h}, (v_F)_{F\in \cc F_h} \right),
$$ and we denote by $\ul{\mr v}_T = \left(v_T, (\bm v_{\grad,F})_{F\in\cc F_T}, (v_F)_{F\in\cc F_T}\right)\in \ul{\mr U}^k_T$ its restriction to a mesh element $T\in \cc T_h$.
We also denote by $v_h$ (no underline) the broken polynomial function on $\cc T_h$ such that $$v_{h|T}=v_T\qquad\forall T\in\cc T_h.$$
We define the global interpolator $\ul{\mr I}_h^k \colon H^2(\Omega) \to \ul{\mr U}_h^k$ such that, for all $v\in H^2(\Omega)$,
\begin{equation}\label{global_interpolation}
(\ul{\mr I}_h^k v )_{| T} = \ul{\mr I}^k_T (v_{| T} )\quad \forall T \in \cc T_h.
\end{equation}
The space $\ul{\mr U}_h^k$ is equipped with the following seminorm (cf.~\eqref{localseminorm} for the definition of $\|{\cdot}\|_{\bb A,T}$):
\begin{equation}\label{norm discrete}
  \| \ul{\mr v}_h\|_{\bb A,h}^{2}\coloneq \sum_{T\in \cc T_h} \|\ul{\mr v}_T\|_{\bb A,T}^2.
\end{equation}

We notice that the couple of boundary conditions~\eqref{eq:bc1}--\eqref{eq:bc2} is \emph{equivalent} to the couple $u=0$ on $\de\Omega$ and $\grad u = \bm 0$ on $\de\Omega$.
Indeed, the fact that $u$ vanishes on $\de\Omega$ implies its tangential derivative to vanish on $\de\Omega$ as well.
Accounting for this remark, we introduce the following subspace that incorporates the latter couple of boundary conditions in a strong manner:
\begin{equation}\label{boundary_ddf}
\ul{\mr U}_{h,0}^k \coloneq \{ \ul{\mr v}_h\in \ul{\mr U}_h^k : \\
v_F = 0,\,\,\bm v_{\grad,F} = \bm 0 \hbox{ for any }F \in \cc F_h^{\rm b}
\}.
\end{equation}
It is a simple matter to check that the image of the restriction of $\ul{\mr I}_h^k$ to $H^2_0(\Omega)$ is contained in $\ul{\mr U}_{h,0}^k$.
\begin{proposition}[Norm $\|\ul{\mr v}_h\|_{\bb A,h}$]
  The mapping $\ul{\mr U}_{h,0}^k \ni \ul{\mr v}_h \mapsto \| \ul{\mr v}_h \|_{\bb A,h} \in \bb R$ defines a norm on $\ul{\mr U}_{h,0}^k$.
\end{proposition}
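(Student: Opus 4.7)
The quantity $\ul{\mr v}_h\mapsto\|\ul{\mr v}_h\|_{\bb A,h}$ is, by construction (see \eqref{localseminorm}--\eqref{norm discrete}), a seminorm on $\ul{\mr U}_{h,0}^k$, since it is a square root of a sum of squared $L^2$-like quantities combined with the uniformly elliptic tensor $\bb A$. Hence only the definiteness property needs to be established: assuming $\|\ul{\mr v}_h\|_{\bb A,h}=0$ for some $\ul{\mr v}_h\in\ul{\mr U}_{h,0}^k$, the goal is to show that every component of $\ul{\mr v}_h$ vanishes.

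The plan is to exploit the vanishing of each of the three nonnegative contributions in $\|\ul{\mr v}_T\|_{\bb A,T}^2$ separately. First, the elementary term $\|\bb A_T^{\nicefrac12}\grad^2 v_T\|_T^2=0$ together with the uniform ellipticity of $\bb A_T$ (so that $\cc A_T^->0$) forces $\grad^2 v_T=\bm 0$, i.e.\ $v_T\in\bb P^1(T)$ for each $T\in\cc T_h$; in particular $\grad v_T$ is a constant vector on each element. Second, the vanishing of the face terms, combined with the fact that $v_F$ and $\bm v_{\grad,F}$ are \emph{single-valued} on interfaces, yields, for every interface $F\in\cc F_h^{\rm i}$ shared by $T_1,T_2\in\cc T_F$, the relations $v_{T_1|F}=v_F=v_{T_2|F}$ and $(\grad v_{T_1})_{|F}=\bm v_{\grad,F}=(\grad v_{T_2})_{|F}$. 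Since $\grad v_{T_i}$ is constant on each $T_i$ and two affine functions coinciding on a segment have the same gradient, the gradient field is constant across any interface.

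A connectivity argument on the mesh graph (well-defined since $\cc F_h$ is a partition of the skeleton and $\Omega$ is connected) then propagates this equality, so that $\grad v_T=\bm g$ for a single vector $\bm g\in\bb R^2$ independent of $T$; matching traces then show that $v_h$ is a globally affine function on $\Omega$. To conclude, I would invoke the boundary conditions encoded in $\ul{\mr U}_{h,0}^k$: picking any boundary face $F\in\cc F_h^{\rm b}$ with $T\in\cc T_F$, we have $\bm 0=\bm v_{\grad,F}=(\grad v_T)_{|F}=\bm g$, so $v_h$ is a global constant, and $0=v_F=v_{T|F}$ on the same boundary face forces that constant to be zero. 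Thus $v_T\equiv 0$ for all $T$, and going back to the relations $v_F=v_{T|F}$ and $\bm v_{\grad,F}=(\grad v_T)_{|F}$ on every face yields $v_F=0$ and $\bm v_{\grad,F}=\bm 0$ for all $F\in\cc F_h$, i.e.\ $\ul{\mr v}_h=\ul{\mr 0}$.

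The only mildly delicate step is the propagation of the constancy of the gradient over the whole mesh; it rests on the connectedness of $\Omega$ and on the partition property $\bigcup_{T\in\cc T_h}\partial T=\bigcup_{F\in\cc F_h}\overline F$, which together ensure that any two elements can be joined by a chain of elements sharing interfaces. Everything else is a direct manipulation of the defining seminorm and of the single-valuedness of the face unknowns built into $\ul{\mr U}_h^k$.
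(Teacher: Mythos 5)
Your proof is correct, but it is organized differently from the paper's. The paper starts at the boundary: for any element $T$ with a boundary face, $\grad^2 v_T\equiv\bm 0$ in $T$ together with $v_T=0$ and $\grad v_T=\bm 0$ on that face forces $v_T\equiv 0$ in $T$, hence all face unknowns on $\partial T$ vanish, and the zero is then propagated inward ``layer by layer''. You instead first establish a global rigidity statement---the vanishing of the Hessians and of the face differences, combined with the single-valuedness of $v_F$ and $\bm v_{\grad,F}$ at interfaces, makes $v_h$ globally affine over the face-connected mesh---and only then invoke the homogeneous boundary data, on a single boundary face, to annihilate that affine function. Both arguments rest on the same local observations and on the same combinatorial fact that every element can be reached through chains of interfaces; the paper leaves this implicit in the phrase about inner layers, while you name it explicitly as connectivity of the face-adjacency graph, which is a fair trade: your version uses the boundary condition only once, the paper's is a more direct induction needing no global affine structure. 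One small wording slip: the claim that ``two affine functions coinciding on a segment have the same gradient'' is false in two dimensions (agreement on a segment only fixes the tangential derivative); fortunately you do not need it, since the equality $(\grad v_{T_1})_{|F}=\bm v_{\grad,F}=(\grad v_{T_2})_{|F}$, which you had already derived from the vanishing face terms, gives the full gradient match across each interface directly.
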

\begin{proof}
  The seminorm property is trivial.
  It then suffices to show that $ \| \ul{\mr v}_h \|_{\bb A,h} = 0 \Longrightarrow \ul{\mr v}_h = \ul 0 \in \ul{\mr U}^k_{h,0}$.
  Clearly, $ \| \ul{\mr v}_h \|_{\bb A,h} = 0 $ implies
  $\grad^2 v_T \equiv \bm 0$ for all $T \in \cc T_h$ and $\bm v_{\grad,F}-\grad v_T \equiv \bm 0$ and $v_F-v_T \equiv 0$  for all $F \in \cc F_h$.
    By definition \eqref{boundary_ddf}, we have $\bm v_{\grad,F} \equiv \bm 0$ and $v_F = 0$ for all $F \in \cc F^b_h$; thus, for any $T \in \cc T_h$,
    if $\cc F_T \cap \cc F^{\mr{b}}_h \neq \emptyset$ then there exists $F \in \cc F^{\mr{b}}_h$ such that $\grad v_T \equiv \bm 0$ and $v_T\equiv 0$ on $F$. Since $\grad^2 v_T \equiv \bm 0$ in $T$, these facts imply that $v_T \equiv 0$ in $T$, which in turn implies that $v_F \equiv 0$ and $\bm v_{\grad,F} \equiv \bm 0$ for 
    all $F \in \cc F_T$. Repeating this argument for inner layers of elements yields the assertion.
\end{proof}

\subsection{Discrete problem}

The \emph{discrete problem} is formulated as follows:
Find $\ul{\mr u}_h \in \ul{\mr U}^k_{h,0}$ such that
\begin{equation}\label{pb_discreto}
  \mr{a}_h(\ul{\mr u}_h, \ul{\mr v}_h) = (f, v_h)\qquad\forall \ul{\mr v}_h \in \ul{\mr U}^k_{h,0}
\end{equation}
with global bilinear form $\mr{a}_h$ on $\ul{\mr U}_h^k \times \ul{\mr U}_h^k$ obtained by element-by-element assembly setting
\begin{equation}
  \label{global_form}\mr{a}_h(\ul{\mr u}_h, \ul{\mr v}_h) \coloneq \sum_{T\in \cc T_h} \mr{a}_{T}(\ul{\mr u}_T, \ul{\mr v}_T).
\end{equation}
The following lemma summarizes the properties of the global bilinear form $\mr{a}_h$.
\begin{lemma}[Properties of $\mr{a}_h$]\label{stab_consist}
  The bilinear form $\mr{a}_h$ defined by~\eqref{global_form} has the following properties:
  \begin{enumerate}[(i)]
  \item \emph{Coercivity and boundedness.} There is a real number $C>0$ independent of $h$, but possibly depending on $\bb A$, $\varrho$ and $k$, such that
    \begin{equation}\label{global_stab}
      C^{-1}\| \ul{\mr v}_h\|_{\bb A,h}^2
      \le \|\ul{\mr v}_h\|_{\mr{a},h}^2\coloneq\mr{a}_h(\ul{\mr v}_h, \ul{\mr v}_h)
      \le C\| \ul{\mr v}_h\|_{\bb A,h}^2
      \qquad\forall \ul{\mr v}_h \in \ul{\mr U}_h^k.
    \end{equation}
  \item \emph{Consistency.} There is a real number $C>0$ independent of $h$, but possibly depending on $\bb A$, $\varrho$ and $k$, such that, for all \mbox{$v\in 
   H_0^2(\Omega) \cap H^4(\Omega) \cap H^{k+3}(\cc T_h)$}, it holds that
    \begin{equation}\label{consistenza}
      \sup_{\ul{\mr w}_h \in \ul{\mr U}_{h,0}^k\setminus\{\ul{0}\}} \frac{ ( \mr{div}\,\mr{\bf{div}}\, \bb A\grad^2 v, w_h) - 
        \mr{a}_h(\ul{\mr I}^k_h v,\ul{\mr w}_h)}{\|\ul{\mr w}_h\|_{\bb A,h}} \le C h^{k+1} |v|_{H^{k+3}(\Omega)}.%\left(\sum_{T\in\cc T_h} h_T^{2(k+1)} 
%      |v|^2_{H^{k+3}(T)}\right)^{\nicefrac{1}{2}}.
    \end{equation}
  \end{enumerate}
\end{lemma}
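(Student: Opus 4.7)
The plan splits into the two parts of the lemma. Part~(i) follows immediately by summing the local coercivity and boundedness inequalities of Lemma~\ref{coerciv} over $T\in\cc T_h$, since by construction $\mr a_h=\sum_{T\in\cc T_h}\mr a_T$ and $\|\ul{\mr v}_h\|^2_{\bb A,h}=\sum_{T\in\cc T_h}\|\ul{\mr v}_T\|^2_{\bb A,T}$; no new idea is required.

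For part~(ii), my strategy is to algebraically manipulate $\mr a_h(\ul{\mr I}^k_h v,\ul{\mr w}_h)$ so that the approximation error $e_T\coloneq v-\varpi_T^{k+2}v$ of the local energy projector surfaces, then to bound the resulting expression using Theorem~\ref{thm:approx.biell} and the consistency~\eqref{consistenza_s_T} of the stabilization. Combining Proposition~\ref{polynomial_consistency} with the energy orthogonality encoded in the definition~\eqref{eq:biell} of $\varpi_T^{k+2}$ (applicable because $\pT\ul{\mr w}_T\in\bb P^{k+2}(T)$) yields $a_{|T}(\pT\ul{\mr I}^k_T v,\pT\ul{\mr w}_T)=a_{|T}(v,\pT\ul{\mr w}_T)$. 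Using the definition~\eqref{reconstruction_locale} of $\pT$ with the test function $z=\varpi_T^{k+2}v\in\bb P^{k+2}(T)$, together with the symmetry of $a_{|T}$ and the splitting $\varpi_T^{k+2}v=v-e_T$, the same quantity can be expanded in volume and boundary integrals whose $v$-parts are tested against $w_T$, $\bm w_{\grad,F}$, and $w_F$. Exploiting (a)~the single-valuedness of $\bm w_{\grad,F},w_F$ on interfaces together with their vanishing on $\cc F_h^{\rm b}$ (cf.~\eqref{boundary_ddf}) and (b)~the continuity across interfaces of the normal traces $(\bb A\grad^2 v)\bm n$ and $\divb\bb A\grad^2 v\cdot\bm n$ (a consequence of $v\in H^4(\Omega)$ and the mesh-compliance with $P_\Omega$ ensuring $\bb A$ does not jump across a face), these $v$-parts collapse into $(\div\divb\bb A\grad^2 v,w_h)$. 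One integration by parts of the remaining volume term $(w_T,\div\divb\bb A_T\grad^2 e_T)_T$ should then yield the key identity
\begin{multline*}
  (\div\divb\bb A\grad^2 v,w_h)-\mr a_h(\ul{\mr I}^k_h v,\ul{\mr w}_h)
  =\sum_{T\in\cc T_h}\Big[a_{|T}(e_T,w_T)
    +\sum_{F\in\cc F_T}(\bm w_{\grad,F}-\grad w_T,\bb A_T\grad^2 e_T\,\bm n_{TF})_F\\
    +\sum_{F\in\cc F_T}(w_T-w_F,\divb\bb A_T\grad^2 e_T\cdot\bm n_{TF})_F\Big]
    -\sum_{T\in\cc T_h}\mr s_T(\ul{\mr I}^k_T v,\ul{\mr w}_T),
\end{multline*}
in which every contribution pairs a factor controllable by $\|\ul{\mr w}_T\|_{\bb A,T}$ against a factor scaling as $h_T^{k+1}|v|_{H^{k+3}(T)}$.

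The four contributions can then be bounded termwise by Cauchy--Schwarz. The Galerkin volume piece uses $\|\bb A_T^{\nicefrac12}\grad^2 w_T\|_T\le\|\ul{\mr w}_T\|_{\bb A,T}$ together with~\eqref{approx_properties} applied for $s=k+3$, $m=2$. The two face contributions use face-wise Cauchy--Schwarz, the direct appearance of $\bm w_{\grad,F}-\grad w_T$ and $w_F-w_T$ in $\|\ul{\mr w}_T\|_{\bb A,T}$ with $h_T$-weights $(h_T/\cc A_T^+)^{\nicefrac12}$ and $(h_T^3/\cc A_T^+)^{\nicefrac12}$, and the boundary-trace approximation~\eqref{approx_properties_b} with $m=2$ for the moment flux and $m=3$ for the shear flux---the latter requiring $k\ge 1$ in accord with the standing assumption of Section~\ref{sec:hho}. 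The stabilization piece is controlled by $\mr s_T(\ul{\mr I}^k_T v,\ul{\mr I}^k_T v)^{\nicefrac12}\mr s_T(\ul{\mr w}_T,\ul{\mr w}_T)^{\nicefrac12}$, then by~\eqref{consistenza_s_T} and the local bound $\mr s_T\le\mr a_T\le C\|\cdot\|^2_{\bb A,T}$ inherited from Lemma~\ref{coerciv}. A final discrete Cauchy--Schwarz over $T\in\cc T_h$ delivers~\eqref{consistenza}.

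The most delicate step I anticipate is the telescopic cancellation of the $v$-parts on interior faces: it demands careful bookkeeping of the orientations $\bm n_{TF}$, a clean fourth-order integration-by-parts identity, and a precise use of the continuity of $(\bb A\grad^2 v)\bm n$ and $\divb\bb A\grad^2 v\cdot\bm n$ afforded jointly by $v\in H^4(\Omega)$ and mesh-compliance with $P_\Omega$. The identification of the admissible polynomial degrees that the various normal traces of a virtual moment tensor take on faces and elements (so that the $\pi^k_T$, $\bm\pi^k_F$, $\pi^k_F$ projections in the definition of $\ul{\mr I}^k_T$ act trivially) likewise warrants some attention.
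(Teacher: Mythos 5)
Your proposal is correct and follows essentially the same route as the paper: part (i) by summing Lemma \ref{coerciv} over the elements, and part (ii) by arriving at precisely the error identity of Section \ref{sec:analysis:err.est} (face terms driven by the energy-projection error $v-\varpi_T^{k+2}v$ plus the stabilization contribution), which is then bounded using Theorem \ref{thm:approx.biell} with $m=2,3$ on $\partial T$, the consistency \eqref{consistenza_s_T}, and the boundedness of $\mr s_T$ from Lemma \ref{coerciv}, exactly as in the paper. The only cosmetic differences are that you derive the identity by expanding $\mr a_h$ through \eqref{reconstruction_locale} and reassembling $(\div\divb\,\bb A\grad^2 v,w_h)$ from the $v$-parts, instead of integrating the continuous term by parts element-wise and invoking \eqref{reconstruction_locale2} as the paper does, and that you estimate the volume term $a_{|T}(v-\varpi_T^{k+2}v,w_T)$ by Cauchy--Schwarz and \eqref{approx_properties} although it vanishes identically by the orthogonality in \eqref{eq:biell} (the paper's $\mathfrak T_1=0$), which is harmless for the final rate.
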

\begin{proof}
  See Section~\ref{sec:analysis:well-posedness}.
\end{proof}
As a consequence of the first inequality in~\eqref{global_stab}, the discrete problem~\eqref{pb_discreto} admits a unique solution. {%
This solution minimizes the following \emph{discrete energy}:
\begin{equation}\label{discr_en}
\ulr U^k_{h,0} \ni \ulr v_h \mapsto \mr E(\ulr v_h) \coloneq \frac{1}{2}\mr a_h(\ulr v_h,\ulr v_h) - (f,v_h) \in \bb R.
\end{equation}
The discrete energy will play an important role in numerical experiments (cf.~Section \ref{sec:numerical.examples} below).
}
{
\begin{remark}[Implementation]\label{rem:implementation}
Proceeding as in standard FE methods, to write an algebraic version of problem \eqref{pb_discreto} we associate to each mesh element or face a set of degrees of freedom (DOFs) that form a basis for the dual space of the local polynomial space supported by it.
  Let a basis $\mathcal{B}_h$ for the space $\ul{\mr U}_{h,0}^k$ be fixed such that every basis function is supported by only one mesh element or face.
  To fix the ideas, we take as DOFs the coefficients of the expansion of a HHO function $\ul{\mr v}_h\in\ul{\mr U}_{h,0}^k$ in $\mathcal{B}_h$, and we collect them in the vector $\sV$ partitioned as
  $$
  \sV=\begin{pmat}[{}]\sV[\cc T_h]\cr\- \sV[\cc F_h]\cr\end{pmat},
  $$
  where the subvector $\sV[\cc T_h]$ collects the coefficients associated with element-based DOFs, while the remaining coefficients (associated to face-based DOFs) are collected in $\sV[\cc F_h]$.
  Denote by $\sA$ the matrix representation of the bilinear form $\mathrm{a}_h$ and by $\sB$ the vector representation of the linear form $\ul{\mr v}_h\mapsto(f,v_h)$, both partitioned in a similar way.
The algebraic problem corresponding to \eqref{pb_discreto} reads
\begin{equation}\label{pb_algebrico}
  \underbrace{\begin{pmat}[{|}]
      \sA[\cc T_h\cc T_h] & \sA[\cc T_h\cc F_h] \cr\-
      \sA[\cc T_h\cc F_h]\trans & \sA[\cc F_h\cc F_h] \cr
  \end{pmat}}_{\sA}
  \underbrace{\begin{pmat}[{}]
      \sU[\cc T_h]\cr\-\sU[\cc F_h]\vphantom{\sU[\cc F_h]\trans}\cr
  \end{pmat}}_{\sU}=
  \underbrace{
  \begin{pmat}[{}]
      \sB[\cc T_h]\cr\-\mathsf{0}_{\cc F_h}\vphantom{\sU[\cc F_h]\trans}\cr
    \end{pmat}}_{\sB}.
\end{equation}
The submatrix $\sA[\cc T_h\cc T_h]$ is block-diagonal and symmetric positive definite, and is therefore inexpensive to invert.
In the practical implementation, this remark can be exploited by solving the linear system \eqref{pb_algebrico} in two steps:
  \begin{subequations}
  \begin{enumerate}[(i)]
  \item First, element-based coefficients in $\sU[\cc T_h]$ are expressed in terms of $\sB[\cc T_h]$ and $\sU[\cc F_h]$ by the inexpensive solution of the first block equation:
    \begin{equation}\label{eq:static.cond:1}
      \sU[\cc T_h]=\sA[\cc T_h\cc T_h]^{-1}\left(
      \sB[\cc T_h] - \sA[\cc T_h\cc F_h]\sU[\cc F_h]
      \right).
    \end{equation}      
    This step is referred to as {\em static condensation} in the FE literature;
  \item Second, face-based coefficients in $\sU[\cc F_h]$ are obtained solving the following global problem involving quantities attached to the mesh skeleton:
    \begin{equation}\label{eq:static.cond:2}
      \underbrace{\left(\sA[\cc F_h\cc F_h]-\sA[\cc T_h\cc F_h]\trans\sA[\cc T_h\cc T_h]^{-1}\sA[\cc T_h\cc F_h]\right)}_{\coloneq\tilde{\mathsf{A}}_{\cc F_h\cc F_h}}
      \sU[\cc F_h]
      =
      -\sA[\cc T_h\cc F_h]\trans\sA[\cc T_h\cc T_h]^{-1}\sB[\cc T_h].
    \end{equation}
    This computationally more intensive step requires to invert the symmetric matrix $\tilde{\mathsf{A}}_{\cc F_h\cc F_h}$, whose stencil involves neighbours through faces, and which has size $N_{\rm dof}\times N_{\rm dof}$ with $N_{\rm dof} = 2\card(\cc F_h^{\rm i}) {k + d - 1\choose k}$.
    Observing that $\tilde{\mathsf{A}}_{\cc F_h\cc F_h}$ is in fact the Schur complement of $\sA[\cc T_h\cc T_h]$ in $\sA$, and since $\sA$ is symmetric and both $\sA$ and $\sA[\cc T_h\cc T_h]$ are positive definite, a classical result in linear algebra yields that also $\tilde{\mathsf{A}}_{\cc F_h\cc F_h}$ is positive definite (see, e.g.,~\cite{horn_zhang}).
  \end{enumerate}
\end{subequations}
\end{remark}
}
\subsection{Main results}

We next present the main results of the analysis, namely error estimates in an energy-like norm, in a jump-seminorm, and in the $L^2$-norm.
Inside the proofs of this section, we often abridge as $a \lesssim b$ the inequality $a \le Cb$ with $C>0$ independent of $h$, but possibly depending on $\bb A$, $\varrho$, and $k$.

\subsubsection{Energy error estimate}

We introduce the global deflection reconstruction operator $\ph:\ul{\mr U}_h^k\to L^2(\Omega)$ such that, for all $\ul{\mr v}_h\in\ul{\mr U}_h^k$,
$$(\ph\ul{\mr v}_h)_{|T}=\pT\ul{\mr v}_T\qquad\forall T\in\cc T_h.
$$
We also define the stabilization seminorm $|{\cdot}|_{\mr{s},h}$ on $\ul{\mr U}_h^k$ setting, for all $\ul{\mr v}_h\in\ul{\mr U}_h^k$,
$$
|\ul{\mr v}_h|_{\mr{s},h}^2\coloneq\sum_{T\in\cc T_h}\mr{s}_T(\ul{\mr v}_T,\ul{\mr v}_T).
$$
\begin{theorem}[Energy error estimate]\label{thm:err.est}
  Let $u \in H^2_0(\Omega)$ and $\ul{\mr u}_h \in \ul{\mr U}^k_{h,0}$ denote the unique solutions to the continuous \eqref{varform_hho} and discrete \eqref{pb_discreto} problems, respectively.
  Assume the additional regularity $u \in H^4(\Omega) \cap H^{k+3}(\cc T_h)$.
  Then, it holds that
  \begin{equation}\label{errore'}
    \|\bb A^{\nicefrac12}\grad_h^2(\ph\ul{\mr u}_h - u)\| + |\ul{\mr u}_h|_{\mr{s},h}
    \le   C h^{k+1} |u|_{{H^{k+3}(\cc T_h)}},%\left(
%    \sum_{T\in\cc T_h} h_T^{2(k+1)} |u|_{H^{k+3}(T)}^2
%    \right)^{\nicefrac12},
  \end{equation}
  where $\grad_h$ denotes the usual broken gradient operator on $\cc T_h$ and the real number $C>0$ is independent of $h$ (but possibly depends on $\bb A$, $\varrho$, and $k$).
\end{theorem}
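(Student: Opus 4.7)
The plan is to carry out a third Strang-lemma argument, building the estimate for $\|\ul{\mr u}_h - \ul{\mr I}_h^k u\|_{\bb A,h}$ first and then deducing the two stated pieces of the error by simple triangle-inequality manipulations. Set $\ul{\mr e}_h \coloneq \ul{\mr I}_h^k u - \ul{\mr u}_h$, which lies in $\ul{\mr U}_{h,0}^k$ because $u\in H^2_0(\Omega)$. By the coercivity bound in Lemma~\ref{stab_consist}(i), $\|\ul{\mr e}_h\|_{\bb A,h}^2 \lesssim \mr a_h(\ul{\mr e}_h, \ul{\mr e}_h)$. Using the discrete equation~\eqref{pb_discreto} to substitute $\mr a_h(\ul{\mr u}_h,\ul{\mr e}_h) = (f,e_h)$, and noticing that~\eqref{eq:pde} together with $\bm M = -\bb A\grad^2 u$ yields $f = \mr{div}\,\mr{\bf div}\,\bb A\grad^2 u$, I would obtain
\[
\mr a_h(\ul{\mr e}_h,\ul{\mr e}_h) = \mr a_h(\ul{\mr I}_h^k u, \ul{\mr e}_h) - (\mr{div}\,\mr{\bf div}\,\bb A\grad^2 u, e_h),
\]
which is exactly the numerator controlled by the consistency estimate~\eqref{consistenza}. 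Dividing through by $\|\ul{\mr e}_h\|_{\bb A,h}$ produces $\|\ul{\mr e}_h\|_{\bb A,h} \lesssim h^{k+1}|u|_{H^{k+3}(\cc T_h)}$.

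Next I would handle the reconstruction piece $\|\bb A^{\nicefrac12}\grad_h^2(\ph\ul{\mr u}_h - u)\|$ by inserting $\pm \ph\ul{\mr I}_h^k u$ and using Proposition~\ref{polynomial_consistency} to identify $\pT\ul{\mr I}_T^k u = \varpi_T^{k+2} u$ on each element. The triangle inequality gives
\[
\|\bb A^{\nicefrac12}\grad_h^2(\ph\ul{\mr u}_h - u)\| \le \|\bb A^{\nicefrac12}\grad_h^2\ph\ul{\mr e}_h\| + \Bigl(\sum_{T\in\cc T_h} \|\bb A_T^{\nicefrac12}\grad^2(\varpi_T^{k+2} u - u)\|_T^2\Bigr)^{\nicefrac12}.
\]
The second summand is bounded elementwise via Theorem~\ref{thm:approx.biell} taken with $s=k+3$, $m=2$, delivering the target rate $h^{k+1}|u|_{H^{k+3}(\cc T_h)}$. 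For the first summand I would invoke the pointwise identity $\|\bb A_T^{\nicefrac12}\grad^2\pT\ul{\mr e}_T\|_T^2 = a_{|T}(\pT\ul{\mr e}_T,\pT\ul{\mr e}_T) \le \mr a_T(\ul{\mr e}_T,\ul{\mr e}_T)$, sum over $T$, and apply the boundedness half of Lemma~\ref{stab_consist}(i) to land on $\|\ul{\mr e}_h\|_{\bb A,h}$, already estimated in the previous step.

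The stabilization piece $|\ul{\mr u}_h|_{\mr s,h}$ is treated by the analogous split
\[
|\ul{\mr u}_h|_{\mr s,h} \le |\ul{\mr e}_h|_{\mr s,h} + |\ul{\mr I}_h^k u|_{\mr s,h}.
\]
The second term factors elementwise and is controlled by summing the local consistency bound~\eqref{consistenza_s_T}. For the first, the fact that $\mr s_T(\ul{\mr v}_T,\ul{\mr v}_T) \le \mr a_T(\ul{\mr v}_T,\ul{\mr v}_T)$ (since $a_{|T}(\pT\cdot,\pT\cdot)\ge 0$) together with Lemma~\ref{stab_consist}(i) yields $|\ul{\mr e}_h|_{\mr s,h} \lesssim \|\ul{\mr e}_h\|_{\bb A,h}$, and we conclude.

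No genuine obstacle arises at this stage: all the heavy lifting has been delegated to Theorem~\ref{thm:approx.biell} (optimal approximation of the local energy projector) and to Lemma~\ref{stab_consist}, whose consistency bound~\eqref{consistenza} is the key ingredient and is itself postponed to Section~\ref{sec:analysis:well-posedness}. Given those two results, the error estimate comes out of a clean Strang argument, and the only bookkeeping point to watch is the correct sign in rewriting $f$ in terms of $\bb A\grad^2 u$ so that the right-hand side matches the consistency numerator verbatim.
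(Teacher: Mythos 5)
Your proof is correct and follows essentially the same route as the paper: a Strang-type argument bounding $\|\ul{\mr I}_h^k u - \ul{\mr u}_h\|_{\bb A,h}$ via coercivity, the discrete problem, and the consistency estimate \eqref{consistenza}, then a triangle inequality whose remaining terms are handled by the approximation properties of $\varpi_T^{k+2}$ (Theorem \ref{thm:approx.biell} with $s=k+3$, $m=2$) and the consistency \eqref{consistenza_s_T} of $\mr s_T$. The only cosmetic difference is that the paper states the basic estimate in the $\|{\cdot}\|_{\mr a,h}$-norm and groups the Galerkin and stabilization pieces through a discrete Cauchy--Schwarz inequality, whereas you treat them separately using the norm equivalence \eqref{global_stab}; the two presentations are equivalent.
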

\medskip
\begin{remark}[Regularity of the solution]
Concerning the regularity assumptions on $u$, we mention as an example that, for $k=1$, the regularity $u\in H^4(\Omega)$ is satisfied by the solution of the biharmonic problem with Dirichlet boundary conditions (obtained taking $\bb A = \bb I$ in \eqref{static_hho}) posed on a three-dimensional cubic domain, provided the load $f$ is square-integrable (see, e.g., Maz'ya \cite[Chapter 4]{mazya}). In two dimensions, under the weaker assumption that $f \in H^{-1}(\Omega)$, it holds that $u \in H^3(\Omega)$ provided $\Omega$ is convex (see, e.g., Grisvard \cite[Chapter 3]{grisvard}). In general, a regularity assumption on the exact solution is actually the consequence of a \emph{compatibility condition} between the datum regularity and the domain geometry. When $f \in L^2(\Omega)$ in two dimensions,  one can have $u \in H^4(\Omega)$ under the condition of Kondratiev on the opening of each corner (see, e.g., Blum--Rannacher \cite{blum}, Grisvard \cite{grisvard2}). As a further reference on the regularity for the solution of fourth-order elliptic problems, we also refer the reader to Dauge \cite[Chapter 4]{dauge}.
{To close this remark, we emphasize that, since $u$ needs only be \emph{locally regular inside each element}, the presence of corner singularities and layers can be accounted for by a judicious choice of $h$ and, possibly, of $k$.}
\end{remark}
\begin{proof}[Proof of Theorem \ref{thm:err.est}]
  Let, for the sake of brevity, $\widehat{\ul{\mr u}}_h \coloneq \ul{\mr I}^k_h u$.
  We start by proving that
  \begin{equation}\label{errore}
    \|\ul{\mr u}_h - \widehat{\ul{\mr u}}_h\|_{\mr{a},h}
    \lesssim h^{k+1} |u|_{{H^{k+3}(\cc T_h)}},%\left(\sum_{T\in\cc T_h}  h_T^{2(k+1)}
  \end{equation}
  with norm $\|{\cdot}\|_{\mr{a},h}$ defined by~\eqref{global_stab}.
  Using the linearity of $\mr{a}_h$ in its first argument together with the discrete problem~\eqref{pb_discreto}, and recalling that $\div\divb \bb A \grad^2 u = f$ a.e. in $\Omega$, we have, for all $\ul{\mr v}_h \in \ul{\mr U}^k_{h,0}$,
  $$
  \mr a_h(\ul{\mr u}_h - \widehat{\ul{\mr u}}_h, \ul{\mr v}_h) = (f,v_h) - \mr a_h(\widehat{\ul{\mr u}}_h,\ul{\mr v}_h) \le \sup_{\ul{\mr w}_h \in \ul{\mr U}^k_{h,0}\setminus\{\ul{0}\}}
  \frac{(\div\divb\bb A\grad^2 u,w_h)-\mr a_h(\widehat{\ul{\mr u}}_h,\ul{\mr w}_h)}{\| \ul{\mr w}_h\|_{\bb A,h}} \|\ul{\mr v}_h\|_{\bb A,h}.
  $$
  Thus, choosing $\ul{\mr v}_h = \ul{\mr u}_h - \widehat{\ul{\mr u}}_h$ and using the consistency~\eqref{consistenza} of $\mr a_h$ to bound the supremum in the right-hand side, the basic estimate~\eqref{errore} follows.

  Let us now prove~\eqref{errore'}. Using the triangle inequality, we infer that
  \begin{equation*}
    \begin{aligned}
      &\|\bb A^{\nicefrac12}\grad_h^2(\ph\ul{\mr u}_h - u)\| + |\ul{\mr u}_h|_{\mr{s},h}
      \\
      &\qquad \le \|\bb A^{\nicefrac12}\grad_h^2(\ph\ul{\mr u}_h - \wh{\ul{\mr u}}_h)\| + |\ul{\mr u}_h - \widehat{\ul{\mr u}}_h|_{\mr{s},h}
      + \|\bb A^{\nicefrac12}\grad^2(\ph\wh{\ul{\mr u}}_h - u)\| + |\wh{\ul{\mr u}}_h|_{\mr{s},h}
      \\
      &\qquad \le \|\ul{\mr u}_h - \wh{\ul{\mr u}}_h\|_{\mr a,h}
      +\|\bb A^{\nicefrac12}\grad^2(\ph\wh{\ul{\mr u}}_h - u)\| + |\wh{\ul{\mr u}}_h|_{\mr{s},h},
    \end{aligned}
  \end{equation*}
  where we have used the discrete Cauchy--Schwarz inequality together with the definition~\eqref{global_stab} of the $\|{\cdot}\|_{\mr a,h}$-norm in the last line.
  The conclusion follows using~\eqref{errore} to estimate the first term in the right-hand side, the optimal approximation properties \eqref{approx_properties} of $\pT\wh{\ul{\mr u}}_T=\varpi_T^{k+2}u$ with $s = k+3$ and $m=2$ for all $T\in\cc T_h$ to estimate the second term,  and the consistency~\eqref{consistenza_s_T} of $\mr{s}_T$ for all $T\in\cc T_h$ to estimate the third term.
\end{proof}
{\begin{remark}[Convergence of face unknowns]
    Combining the norm equivalence \eqref{global_stab} with \eqref{errore}, we readily infer that
    \begin{multline*}
    \left[
      \sum_{T\in\cc T_h}\left( \|\bb A_T^{\nicefrac{1}{2}} \grad^2 (u_T - \pi_T^k u)\|_T^2
      + \sum_{F\in\cc F_T}\left(
      \frac{\cc A^+_T}{h_T}\|\bm u_{\grad,F}-\bm\pi_F^k(\grad u)\|_F^2
      + \frac{\cc A^+_T}{h_T^3}\|u_F - \pi_F^ku\|_F^2
      \right) \right)
      \right]^{\nicefrac12}
    \\
    \lesssim h^{k+1}|u|_{H^{k+3}(\cc T_h)},
    \end{multline*}
    which shows, in particular, that the face variables converge in an energy-like norm to the corresponding projections of the exact solution and its normal derivative.
    This is in itself a supercloseness result for the face variables, since, replacing $\bm\pi_F^k(\grad u)$ by $\grad u$ and $\pi_F^ku$ by $u$ in the left-hand side of the above inequality, one would only obtain a suboptimal estimate in $h^{k-1}$ (which would only converge if $k\ge 2$). %\nicefrac12}$.
    An optimal error estimate in $h^{k+1}$ for the trace of $u$ and its gradient can be recovered using the deflection reconstruction instead of the face variables:
    $$
    \left[
      \sum_{T\in\cc T_h}\sum_{F\in\cc F_T}\left(
      \frac{\cc A^+_T}{h_T}\|\grad(\pT\ul{\mr u}_T- u)\|_F^2
      + \frac{\cc A^+_T}{h_T^3}\|\pT\ul{\mr u}_T- u\|_F^2
      \right)
      \right]^{\nicefrac12}\lesssim h^{k+1}|u|_{H^{k+3}(\cc T_h)}.
    $$
\end{remark}}
\begin{remark}[Convergence of the jumps]
From the estimate of Theorem~\ref{thm:err.est}, one can prove that the jumps of $\ph\ul{\mr u}_h$ and of its gradient converge to zero with optimal rate. To this end, define on $H^2(\cc T_h)$ (cf. definition~\eqref{H_ell_brise}) the following \emph{jump seminorm}:
  $$
  |v|_{\mr{J},h}^2\coloneq
  \sum_{F\in\cc F_h}\left(
  \frac{\cc A_F}{h_F}\|\bm{\pi}_F^k\jump{\grad v}\|_F^2
  + \frac{\cc A_F}{h_F^3}\|\pi_F^{k}\jump{v}\|_F^2
  \right),
  $$
  where $\jump{{\cdot}}$ is the usual jump operator if $F$ is an interface (the sign is irrelevant), whereas $\jump{\varphi}\coloneq\varphi_{|F}$ if $F$ is a boundary face, and $\cc A_F\coloneq \min_{T\in \cc T_F} \cc A_T^+$. Then, observing that $|\ph\ul{\mr u}_h|_{\mr{J},h}^2\le 2\varrho^{-6}|\ul{\mr u}_h|^2_{\mr{s},h}$ as a consequence of the triangle inequality together with~\eqref{meshreg}, it is inferred from~\eqref{errore'} that
  \begin{equation}\label{eq:jump.est}
    |\ph\ul{\mr u}_h|_{\mr{J},h}\le C h^{k+1} |u|_{{H^{k+3}(\cc T_h)}}.
  \end{equation}
  with real number $C>0$ independent of $h$, but possibly depending on $\bb A$, $\varrho$, and $k$.
\end{remark}
\subsubsection{$\vect L^2$-error estimate}
A sharp $L^2$-norm error estimate can also be inferred assuming \emph{biharmonic regularity}, in the following form:
For all $q \in L^2(\Omega)$, the unique solution $z\in H^2_0(\Omega)$ to
\begin{equation}\label{aux_pb}
a(z,v) = (q,v)\qquad\forall v \in H^2_0(\Omega)
\end{equation}
satisfies the \emph{a priori} estimate {(see, e.g., \cite{blum})}
\begin{equation}\label{biell_reg}
\|z\|_{H^4(\Omega)} \le C_{\mr{bihar}} \|q\|,
\end{equation}
with $C_{\mr{bihar}}>0$ only depending on $\Omega$ and on $\bb A$.
\begin{theorem}[$L^2$-error estimate]\label{L2:err.est}
  Let $u \in H^2_0(\Omega)$ and $\ul{\mr u}_h \in \ul{\mr U}^k_{h,0}$ denote the unique solutions to the continuous \eqref{varform_hho} and discrete \eqref{pb_discreto} problems, respectively.
  Assume $u \in H^4(\Omega) \cap {H^{k+3}(\cc T_h)}$, biharmonic regularity, and $f \in H^{k+1}(\cc T_h)$.
  Then, there exists a real number $C>0$ depending on $\bb A$, $\varrho$, and $k$, but independent of $h$, such that
\begin{equation} \label{L2:err.est.thm}
  \|\ph \ulr u_h - u\| \le C h^{k+3} \left(\|u\|_{{H^{k+3}(\cc T_h)}}+\|f\|_{H^{k +1}(\cc T_h)}\right).
\end{equation}
\end{theorem}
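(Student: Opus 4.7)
The approach is a standard Aubin--Nitsche duality argument, leveraging the extra regularity provided by biharmonic regularity on the (self-adjoint) problem associated to $a(\cdot,\cdot)$.

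First, I would split the error by inserting the interpolate: setting $\widehat{\ulr u}_h \coloneq \ulr I_h^k u$ and $e_h \coloneq \ulr u_h - \widehat{\ulr u}_h$, Proposition~\ref{polynomial_consistency} yields
\[
\ph\ulr u_h - u = \ph e_h + (\varpi_h^{k+2}u - u),
\]
where $\varpi_h^{k+2}$ denotes the broken local energy projector. By Theorem~\ref{thm:approx.biell} applied locally with $s=k+3$ and $m=0$, the second summand is bounded by $C h^{k+3}|u|_{H^{k+3}(\cc T_h)}$, so the remaining task is to estimate $\|\ph e_h\|$.

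Next, I would set up the dual problem with data $q \coloneq \ph e_h \in L^2(\Omega)$: let $z \in H^2_0(\Omega)$ solve $a(z,v)=(q,v)$ for all $v \in H^2_0(\Omega)$; biharmonic regularity~\eqref{biell_reg} then gives $z \in H^4(\Omega)$ with $\|z\|_{H^4(\Omega)}\le C_{\mr{bihar}}\|q\|$. Using the identity $\div\divb\bb A\grad^2 z = q$ together with element-by-element integration by parts and the definition~\eqref{reconstruction_locale} of $\pT$, then testing the discrete problem~\eqref{pb_discreto} with $\ulr I_h^k z$ and expanding $\mr a_h(\widehat{\ulr u}_h,\ulr I_h^k z)$ via the primal consistency defect, I would derive an identity of the form
\[
\|q\|^2 = \mathcal{E}_u(\ulr I_h^k z) + \mathcal{E}_z(e_h) + \mathcal{E}_f,
\]
where $\mathcal{E}_u$ and $\mathcal{E}_z$ are the primal and adjoint consistency errors for $\mr a_h$ (in the sense of Lemma~\ref{stab_consist}(ii)), and $\mathcal{E}_f\coloneq(f-\pi_h^k f,z-\pi_h^k z)$ captures the discrepancy between $(f,\pi_h^k z)$ and $(f,z)$ via $L^2$-orthogonality of $\pi_h^k$.

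Each contribution would then be bounded at the sharp rate $h^{k+3}\|q\|$. For $\mathcal{E}_f$, Cauchy--Schwarz together with the standard estimates $\|f-\pi_h^k f\|\le C h^{k+1}|f|_{H^{k+1}(\cc T_h)}$ and $\|z-\pi_h^k z\|\le C h^2|z|_{H^2(\Omega)}\le C h^2\|q\|$ yields $|\mathcal{E}_f|\le C h^{k+3}\|f\|_{H^{k+1}(\cc T_h)}\|q\|$. For $\mathcal{E}_z(e_h)$, since $z$ has only $H^4$ regularity, I would revisit the proof of Lemma~\ref{stab_consist}(ii) and, substituting $s=4$ for $s=k+3$ in the approximation estimates~\eqref{approx_properties}--\eqref{approx_properties_b}, establish the sharpened bound $|\mathcal{E}_z(\ulr w_h)|\le C h^2\|z\|_{H^4(\Omega)}\|\ulr w_h\|_{\bb A,h}$; testing with $\ulr w_h=e_h$ and using Theorem~\ref{thm:err.est} to bound $\|e_h\|_{\bb A,h}\le C h^{k+1}|u|_{H^{k+3}(\cc T_h)}$ then gives $|\mathcal{E}_z(e_h)|\le C h^{k+3}|u|_{H^{k+3}(\cc T_h)}\|q\|$.

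The delicate step, and the main obstacle, is the primal term $\mathcal{E}_u(\ulr I_h^k z)$: a direct application of Lemma~\ref{stab_consist}(ii) only yields $C h^{k+1}|u|_{H^{k+3}(\cc T_h)}\|\ulr I_h^k z\|_{\bb A,h}\le C h^{k+1}|u|_{H^{k+3}(\cc T_h)}\|q\|$, i.e.\ two orders short of the target. To recover the missing $h^2$, I would exploit the symmetry of the consistency defect in its two arguments by inserting appropriate $L^2$-projections and rewriting each face contribution as an $L^2$-pairing between an approximation residual of $u$---bounded, via~\eqref{approx_properties_b} with $s=k+3$, by $h_T^{k+\nicefrac32}|u|_{H^{k+3}(T)}$---and a residual of $z$---bounded, via the corresponding estimate with $s=4$, by $h_T^{\nicefrac52}|z|_{H^4(T)}$; summing over faces and accounting for the scaling weights of the stabilization produces $C h^{k+3}|u|_{H^{k+3}(\cc T_h)}\|z\|_{H^4(\Omega)}\le C h^{k+3}|u|_{H^{k+3}(\cc T_h)}\|q\|$. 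Collecting the three bounds and dividing by $\|q\|$ gives $\|\ph e_h\|\le C h^{k+3}(\|u\|_{H^{k+3}(\cc T_h)}+\|f\|_{H^{k+1}(\cc T_h)})$, which combined with the approximation estimate for $\|\varpi_h^{k+2}u-u\|$ concludes the proof of~\eqref{L2:err.est.thm}.
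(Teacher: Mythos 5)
Your overall strategy (duality with biharmonic regularity, adjoint consistency of $z$ at rate $h^2$, the term $(f-\pi^k_h f,\,z-\pi^k_h z)$) is the same as the paper's, but two steps do not hold as written. First, the identity $\|q\|^2=\mathcal{E}_u(\ulr I^k_h z)+\mathcal{E}_z(\ulr e_h)+\mathcal{E}_f$ with $q=\ph\ulr e_h$ is not correct: both consistency defects, as defined in Lemma~\ref{stab_consist}(ii), pair the PDE residual with the \emph{element component} of the discrete test vector, so the symmetry of $\mr a_h$ only yields $\mathcal{E}_u(\ulr I^k_h z)+\mathcal{E}_z(\ulr e_h)=(q,e_h)$, where $e_h$ is the broken polynomial collecting the element unknowns of your error vector; with your choice of $q$ this is $(\ph\ulr e_h,e_h)$, not $\|\ph\ulr e_h\|^2$, and $\mathcal{E}_f$ does not appear additively at this stage either. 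This part is repairable: either run the duality for $\|e_h\|=\|u_h-\wh u_h\|$, as the paper does in Lemma~\ref{L2.lemma}, and recover $\|\ph\ulr u_h-\ph\wh{\ulr u}_h\|$ afterwards through the closure condition \eqref{fermeture}, which gives $\|\pT\ulr e_T-e_T\|_T\lesssim h_T^2\|\ulr e_T\|_{\bb A,T}$ by \eqref{stima} with $l=1$; or keep $q=\ph\ulr e_h$ and bound the missing remainder $(q,\ph\ulr e_h-e_h)$ in the same way.

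Second, and this is the genuine gap, your treatment of the primal term $\mathcal{E}_u(\ulr I^k_h z)$ does not produce the two missing orders. The face contributions of this defect pair $(\bb A_T\grad^2(\varpi_T^{k+2}u-u))\bm n_{TF}$ and $\divb\bb A_T\grad^2(\varpi_T^{k+2}u-u)\cdot\bm n_{TF}$ --- of sizes $h^{k+\nicefrac12}$ and $h^{k-\nicefrac12}$ by \eqref{approx_properties_b} with $m=2,3$, not $h^{k+\nicefrac32}$ --- with the quantities $\bm\pi_F^k(\grad z)-\grad\pi_T^k z$ and $\pi_F^k z-\pi_T^k z$, which involve only the degree-$k$ $L^2$-projections of $z$ and are therefore of sizes $h^{\min(k+1,4)-\nicefrac32}$ and $h^{\min(k+1,4)-\nicefrac12}$; no insertion of $L^2$-projections converts them into energy-projector residuals of order $h^{\nicefrac52}$. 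For $k=1$ the face-by-face pairing you describe thus gives only $h^{k+1}$, not $h^{k+3}$. The missing idea is that no face estimate is needed at all: by the orthogonality \eqref{orthogonality} one has $a_{|T}(\varpi_T^{k+2}u,\varpi_T^{k+2}z)-a_{|T}(u,z)=-a_{|T}(\varpi_T^{k+2}u-u,\varpi_T^{k+2}z-z)$, and $\sum_{T\in\cc T_h}a_{|T}(u,z)=(f,z)$, so that $\mr a_h(\wh{\ulr u}_h,\wh{\ulr z}_h)-(f,\pi^k_h z)$ collapses into volume products of two projection errors (each pair giving $h^{k+1}\cdot h^2$), the stabilization cross terms $\mr s_T(\wh{\ulr u}_T,\wh{\ulr z}_T)$ handled by Cauchy--Schwarz together with \eqref{consistenza_s_T} applied with $s=k+3$ for $u$ and $s=4$ for $z$, and the term $(f-\pi^k_h f,z-\pi^k_h z)$. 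This is how the paper gains the factor $h^2$ on the primal side; without it your argument does not reach the rate $h^{k+3}$.
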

\begin{proof}
Let, for the sake of brevity, $\wh{\ulr u}_h \coloneq \ulr I^k_h u$.
By the triangle inequality, we have that
$$
\|\ph \ulr u_h - u\| \le  \|\ph \wh{\ulr u}_h - u\| + \| \ph(\ulr u_h - \wh{\ulr u}_h)\|  \eqcolon \mathfrak T_1 + \mathfrak T_2.
$$
By the approximation properties \eqref{approx_properties}
of $\pT\circ \ulr I^k_T = \varpi^{k+2}_T$ (cf. Remark~\ref{pT_circ_IT}) with $s=k+3$ and $m=0$, we immediately
have that $$\mathfrak T_1 \lesssim h^{k+3} \|u\|_{{H^{k+3}(\cc T_h)}}.$$
For the second term, on the other hand, we observe that
$$
\begin{aligned}
\mathfrak T_2^2 & = \sum_{T\in\cc T_h} \|\pT(\wh{\ulr u}_T - \ulr u_T)\|_T^2 \\
& \lesssim \sum_{T\in\cc T_h}\left( h_T^4 \|\bb A^{\nf{1}{2}}_T \grad^2 \pT( \wh{\ulr u}_T - \ulr u_T)\|_T^2 +
\|\pi^1_T(\wh{u}_T - u_T)\|_T^2 \right) \\
& \lesssim h^4 \|\wh{\ulr u}_h - \ulr u_h\|_{\mr{a},h}^2 + \|\wh u_h - u_h\|^2,
\end{aligned}
$$
where we have used the triangle inequality and the approximation properties of $\pi_T^1$ for $s=2$ and $m=0$,
as well as the closure condition \eqref{fermeture} to pass to the second line, and the definition
of the $\|{\cdot}\|_{\mr a,h}$-norm to conclude. Using \eqref{errore} and Lemma~\ref{L2.lemma} below
to bound the first and second addend in the right-hand side, respectively, the conclusion follows.
\end{proof}
%
%{%
%\begin{remark}[Convergence of the discrete energy]
%Notice that the exact and discrete solutions $u\in H^2_0(\Omega)$ and $\ulr u_h \in \ulr U^k_{h,0}$ satisfy $a(u,u)=(f,u)$ and $\mr a_h(\ulr u_h,\ulr u_h) = (f,u_h)$, respectively. Thus, it holds
%$$
%E(u) = -\frac{1}{2}(f,u),\quad \mr E(\ulr u_h) = -\frac{1}{2}(f,u_h),
%$$
%so that
%\begin{equation}\label{en_conv}
%|E(u)-\mr E(\ulr u_h)| \le
%\end{equation}
%\end{remark}
%}
%

The following lemma, used in the proof of Theorem~\ref{L2:err.est} above, shows that element-based discrete unknowns behave ``almost'' like the $L^2$-orthogonal projection of the exact solution on the space of broken polynomials of total degree at most $k$ on $\cc T_h$.
\begin{lemma}[Supercloseness of element discrete unknowns]\label{L2.lemma}
  Under the assumptions and notations of Theorem \ref{L2:err.est}, it holds that
  \begin{equation} \label{L2:err.est.lemma}
    \|\wh u_h - u_h\| \le C h^{k+3} \left(\|u\|_{{H^{k+3}(\cc T_h)}}+\|f\|_{H^{k+1}(\cc T_h)}\right),
  \end{equation}
  where $\wh u_h$ and $u_h$ are the broken polynomial functions of total degree at most $k$ such that $\wh u_{h | T} \coloneq \wh u_T = \pi^k_T   u$
  and $u_{h | T} \coloneq u_T$ for any mesh element $T \in \cc T_h$.
\end{lemma}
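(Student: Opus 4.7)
The strategy is a classical Aubin--Nitsche duality argument tailored to the HHO framework, exploiting the assumed biharmonic regularity. Set $\ul e_h \coloneq \ul u_h - \wh{\ul u}_h \in \ul{\mr U}^k_{h,0}$ with element components $e_h = u_h - \wh u_h$. Introduce the dual problem: find $z \in H^2_0(\Omega)$ such that $a(z, v) = (e_h, v)$ for all $v \in H^2_0(\Omega)$. By \eqref{biell_reg}, $z \in H^4(\Omega)$ with $\|z\|_{H^4(\Omega)} \le C_{\mr{bihar}} \|e_h\|$, and strongly $\div\divb (\bb A \grad^2 z) = e_h$ a.e. in $\Omega$. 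Let $\ul \Xi_h \coloneq \ul{\mr I}_h^k z \in \ul{\mr U}^k_{h,0}$.

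Combining the dual consistency relation $(e_h, \ul w_h\text{-elements}) - \mr{a}_h(\ul \Xi_h, \ul w_h) \eqcolon \mathcal{E}^z_h(\ul w_h)$, evaluated at $\ul w_h = \ul e_h$, with the symmetry of $\mr{a}_h$, the discrete problem \eqref{pb_discreto}, and the primal consistency relation $(f, \ul w_h\text{-elements}) - \mr{a}_h(\wh{\ul u}_h, \ul w_h) \eqcolon \mathcal{E}^u_h(\ul w_h)$, evaluated at $\ul w_h = \ul \Xi_h$, one obtains the fundamental identity
\[
\|e_h\|^2 \;=\; \mathcal{E}^u_h(\ul \Xi_h) \,+\, \mathcal{E}^z_h(\ul e_h).
\]
For the dual term, the consistency estimate \eqref{consistenza} applied with $z$ in place of $v$ admits a weaker variant (obtained by substituting the approximation properties of $\varpi_T^{k+2}$ at order $s=4$) that trades the rate $h^{k+1}$ for $h^2$ and the seminorm $|z|_{H^{k+3}}$ for $|z|_{H^4}$. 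Together with the \emph{a priori} bound $\|z\|_{H^4}\le C_{\mr{bihar}}\|e_h\|$ and the energy estimate \eqref{errore} on $\ul e_h$, this gives
\[
|\mathcal{E}^z_h(\ul e_h)| \;\le\; C\,h^{2}\,\|z\|_{H^4}\,\|\ul e_h\|_{\bb A,h} \;\le\; C\,h^{k+3}\,|u|_{H^{k+3}(\cc T_h)}\,\|e_h\|,
\]
which is the desired rate. Dividing by $\|e_h\|$ at the end will complete the argument once $\mathcal{E}^u_h(\ul \Xi_h)$ is handled.

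The main obstacle is the sharp estimate of $\mathcal{E}^u_h(\ul \Xi_h)$. A naive use of \eqref{consistenza} only yields $|\mathcal{E}^u_h(\ul \Xi_h)| \lesssim h^{k+1}\,|u|_{H^{k+3}(\cc T_h)}\,\|\ul \Xi_h\|_{\bb A,h} \lesssim h^{k+1}\,|u|_{H^{k+3}(\cc T_h)}\,\|z\|_{H^2}$, which is insufficient by two orders of $h$. The remedy is to unroll $\mathcal{E}^u_h(\ul \Xi_h)$ by element-wise integrations by parts on $(f,\pi_h^k z) = (\div\divb\bb A\grad^2 u,\pi_h^k z)$ together with the reconstruction identity~\eqref{reconstruction_locale2} and the stabilization definition \eqref{stabiliz}, producing a sum of localised residuals in which each cofactor of a face/volume approximation error of $u$ (such as $u-\pi_T^k u$, $\grad u - \bm\pi_F^k(\grad u)$, $u-\pi_F^k u$) and of $f$ (of the form $f-\pi_T^k f$) is paired with a companion quantity attached to $\ul \Xi_h$ that, crucially, features an approximation error of $z$ or $\grad z$. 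Invoking the orthogonality of the $L^2$-projections together with the optimal approximation estimates of Theorem~\ref{thm:approx.biell} and \eqref{stima}--\eqref{stima_bordo} applied to $u$, $f$, and $z$, one balances an $h^{k+1}$ factor from the $u$- or $f$-part against an $h^{2}$ factor from the $z$-part, yielding
\[
|\mathcal{E}^u_h(\ul \Xi_h)| \;\le\; C\,h^{k+3}\,\bigl(\|u\|_{H^{k+3}(\cc T_h)} + \|f\|_{H^{k+1}(\cc T_h)}\bigr)\,\|e_h\|.
\]
The presence of $\|f\|_{H^{k+1}(\cc T_h)}$ comes from the volumetric cross-term of the form $\sum_T (f-\pi_T^k f,\, z-\pi_T^k z)_T$ that surfaces in this decomposition. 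Summing the bounds on $\mathcal{E}^u_h(\ul \Xi_h)$ and $\mathcal{E}^z_h(\ul e_h)$, plugging into the identity above, and dividing by $\|e_h\|$ yields \eqref{L2:err.est.lemma}. The hardest technical point is the careful bookkeeping in the decomposition of $\mathcal{E}^u_h(\ul \Xi_h)$, where the interplay between the $H^{k+3}$-regularity of $u$, the $H^{k+1}$-regularity of $f$ and the merely $H^4$-regularity of $z$ must be exploited in a balanced way.
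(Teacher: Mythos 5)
Your duality setup is sound and coincides with the paper's: the identity $\|e_h\|^2=\mathcal{E}^u_h(\ul{\mr I}^k_hz)+\mathcal{E}^z_h(\ul{\mr e}_h)$ is exactly the paper's splitting $\|e_h\|^2=\mathfrak T_1+\mathfrak T_2$, and your treatment of the dual part (consistency of $\mr a_h$ rerun with $s=4$, giving $h^2\|z\|_{H^4(\Omega)}\|\ul{\mr e}_h\|_{\bb A,h}$, then \eqref{errore} and biharmonic regularity) is correct and is what the paper does for $\mathfrak T_1$.

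The gap is in the term you yourself flag as the crux, $\mathcal{E}^u_h(\ul{\mr I}^k_hz)$. The route you describe --- element-wise integration by parts of $(\div\divb\,\bb A\grad^2u,\pi^k_hz)$ combined with \eqref{reconstruction_locale2}, so that approximation errors of $u$ and $f$ get paired with ``approximation errors of $z$'' carried by the components of $\ul{\mr I}^k_hz$ --- cannot deliver the extra factor $h^2$ for small $k$. Indeed, that expansion is just the consistency error of Lemma~\ref{stab_consist} evaluated at $\ul{\mr w}_h=\ul{\mr I}^k_hz$, and its face residuals are paired with $\bm\pi^k_F(\grad z)-\grad\pi^k_Tz$ and $\pi^k_Fz-\pi^k_Tz$, i.e.\ with \emph{degree-$k$ $L^2$-projection} errors of $z$, whose decay saturates at order $k+1$: for $k=1$, $\grad\pi^1_Tz$ is constant, so $\|\bm\pi^1_F(\grad z)-\grad\pi^1_Tz\|_F\lesssim h^{\nicefrac12}|z|_{H^2(T)}$ at best, while your balancing would require $O(h^{\nicefrac52}|z|_{H^4(T)})$. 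A term-by-term Cauchy--Schwarz bound of that expansion therefore tops out at $h^{k+1}$ for $k\in\{1,2\}$; the hidden cancellations are not captured. The paper avoids face terms in this contribution altogether: it writes $(f,\pi^k_hz)=(\pi^k_hf,z)$, uses the exact identity $\sum_{T\in\cc T_h}a_{|T}(u,z)=(f,z)$ together with the orthogonality \eqref{eq:biell} of the energy projector $\varpi^{k+2}_T$, and thereby reduces $\mathcal{E}^u_h(\ul{\mr I}^k_hz)$ to three purely volumetric pieces, $\sum_{T}a_{|T}(\varpi^{k+2}_Tu-u,\varpi^{k+2}_Tz-z)$, $\sum_T\mr s_T(\ul{\mr I}^k_Tu,\ul{\mr I}^k_Tz)$ and $(f-\pi^k_hf,z-\pi^k_hz)$, each of which cleanly balances $h^{k+1}$ against $h^2$ (the last one is the source of $\|f\|_{H^{k+1}(\cc T_h)}$, as you correctly anticipated). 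Without this restructuring --- or some equivalent device replacing the degree-$k$ companions of $z$ by errors of the degree-$(k+2)$ energy projection --- your plan does not prove the claimed $h^{k+3}$ rate for $k=1,2$.
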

\begin{proof}
  Set, for the sake of brevity, $\ulr e_h \coloneq \wh{\ulr u}_h - \ulr u_h$ and $e_h \coloneq \wh u_h - u_h$.
  Let $z$ solve \eqref{aux_pb} with $q = e_h$ and set $\wh{\ulr z}_h \coloneq \ulr I^k_h z$.
  Integrating by parts, using the linearity of $\mr a_h$ in its first argument, as well as the continuity
  of moments and shear forces at interfaces, and letting
  $\wc z_T \coloneq \varpi^{k+2}_T(z_{| T})$ for all $T\in\cc T_h$, we have that $\|e_h\|^2 = \mathfrak T_1 + \mathfrak T_2$, with
  \begin{equation}
    \begin{aligned}
      \mathfrak T_1 \coloneq & \sum_{T\in \cc T_h} \sum_{F\in \cc F_T}\Big( \big(\bb A_T \grad^2(z-\wc z_T)\big)\bm n_{TF},
      \bm e_{\grad,F}-\grad e_T)_F \\
      & -(\divb \bb A_T\grad^2(z-\wc z_T)\cdot \bm n_{TF}, e_F - e_T)_F\Big) - \sum_{T\in \cc T_h} \mr s_T(\wh{\ulr z}_T,\ulr e_T),\\
      \mathfrak T_2  \coloneq & \ \mr a_h(\wh{\ulr u}_h, \wh{\ulr z}_h) - (f,\pi^k_h z),
    \end{aligned}
  \end{equation}
  where $\pi^k_h$ is such that $(\pi^k_h v)_{| T} = \pi^k_T (v_{| T})$ for all $T\in \cc T_h$ and all $v \in H^2(\Omega)$.
  The Cauchy--Schwarz inequality then yields 
  $$
  \begin{aligned}
    |\mathfrak T_1| \lesssim
    \left[ \sum_{T \in \cc T_h}\hspace{-1ex} \left(
    \frac{h_T}{\cc A_T^+} \|\bb A_T \grad^2(z-\wc z_T)\|_{\partial T}^2
    + \frac{h_T^3}{\cc A_T^+} \|\divb \bb A_T\grad^2(z-\wc z_T)\|_{\partial T}^2
    \right) 
    + | \wh{\ulr z}_h|^2_{\mr s,h}
    \right]^{\nf{1}{2}}
    \hspace{-1em}\times\left( \|\ulr e_h\|_{\bb A,h}^2 + |\ulr e_h|_{\mr s,h}^2 \right)^{\nf{1}{2}}.
  \end{aligned}
$$
The approximation properties \eqref{approx_biell} of $\varpi^{k+2}_T$ with $s=4$, the consistency property \eqref{consistenza_s_T} of the stabilization bilinear form, and the stability of $\mr a_h$ together with the energy error estimate \eqref{errore} allow to conclude that
$$|\mathfrak T_1| \lesssim h^2 |z|_{H^4(\Omega)} h^{k+1} \|u\|_{{H^{k+3}(\cc T_h)}} \lesssim h^{k+3} \|u\|_{{H^{k+3}(\cc T_h)}} \|e_h\|,$$
where in the last estimate we have used the biharmonic regularity hypothesis. Turning to $\mathfrak T_2$, using the fact that $(f,\pi^k_T   z)_T =
(\pi^k_T   f, z)_T$ and exploiting the orthogonality property \eqref{orthogonality}, we have
$$\mathfrak T_2 = \sum_{T\in \cc T_h}  a_{| T}(\varpi^{k+2}_T u - u, \varpi^{k+2}_T z - z) + \sum_{T\in \cc T_h} \mr s_T(\wh{\ulr u}_T,\wh{\ulr z}_T)
+ (f-\pi^{k}_h f,z) \eqcolon \mathfrak T_{2,1} + \mathfrak T_{2,2} + \mathfrak T_{2,3}.$$
We have that $|\mathfrak T_{2,1}| \lesssim h^{k+3} \|u\|_{{H^{k+3}(\cc T_h)}}\|e_h\|$ by the Cauchy--Schwarz inequality, the approximation properties of $\varpi^{k+2}_T$, and biharmonic regularity.
An analogous bound can be obtained for $|\mathfrak T_{2,2}|$. Finally, we observe that $\mathfrak T_{2,3} = (f-\pi^{k}_h  f, z - \pi^{k}_h  z)$ by the definition \eqref{L2proj} of the $L^2$-orthogonal projector.
Using the approximation properties \eqref{stima} of $\pi^k_T$ with $l=k$, $m=0$, and $s=k+1$ for the first factor, $s=2$ for the second, we obtain
$$
|\mathfrak T_{2,3}| \le \| f-\pi^{k}_h f\| \| z-\pi^{k}_h z\| \lesssim h^{k+1} \|f\|_{H^{k+1}(\cc T_h)} h^2 \|z\|_{H^2(\Omega)}
\lesssim h^{k+3} \| f \|_{H^{k +1}(\cc T_h)} \|e_h\|.
$$
This concludes the proof.
\end{proof}
\subsection{Numerical examples}\label{sec:numerical.examples}
In this section we solve problem \eqref{static_hho} for $\bb A = \bb I$ (i.e., the biharmonic equation) in the unit square and, with a view towards testing the convergence of the method in the case of more complex geometries, in a L-shaped domain as well.
\subsubsection{Unit square}
In this first case, the domain under consideration is $\Omega = (0,1)\times(0,1)$. The right-hand side $f$ is set in agreement with the exact solution
$$u(x,y) = x^2(1-x)^2y^2(1-y)^2,$$
on three different meshes: triangular, cartesian and hexagonal (cf.~Fig.~\ref{meshes}). Figures \ref{test:err_en} and \ref{test:err_L2} show convergence results in the energy norm and in the $L^2$-norm, respectively, for different meshes and polynomial degrees, up to three. We consider $\|\wh{\ulr u}_h - \ulr u_h\|_{\mr{a},h}$ and $\|\pi^k_h u - u_h\|$ as measures of the error in the energy norm and in the $L^2$-norm, respectively. Since biharmonic regularity \eqref{biell_reg} is satisfied (the domain is convex and the exact solution is of class $C^\infty$), the numerical results show asymptotic convergence rates that match those predicted by the theory, i.e. estimates \eqref{errore} and \eqref{L2:err.est.lemma}, in all of the three cases.
\begin{figure}
\centering
\subfloat[Triangular]{\includegraphics[scale=1.2]{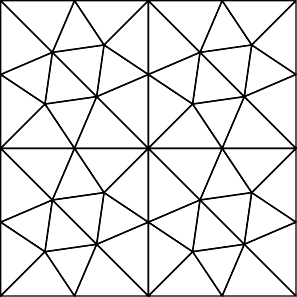}}
\qquad\quad\quad
\subfloat[Cartesian]{\includegraphics[scale=1.2]{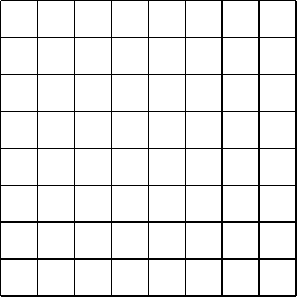}}
\qquad\quad\quad
\subfloat[Hexagonal]{\includegraphics[scale=1.2]{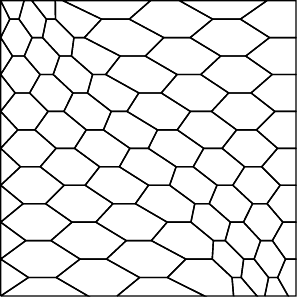}}
\caption{Meshes used for the numerical tests} 
\label{meshes}
\end{figure} 
\begin{figure}[h!]
\includegraphics{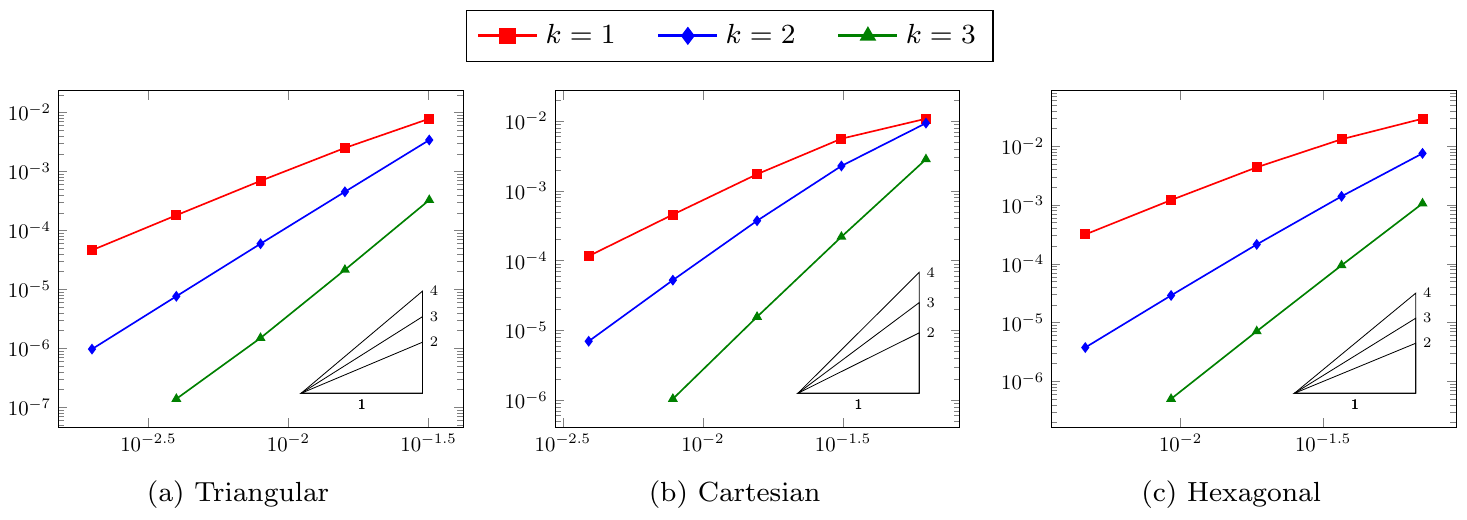}
\caption{$\|\wh{\ulr u}_h - \ulr u_h\|_{\mr{a},h}$ vs.~$h$ for three different meshes}
\label{test:err_en}
\end{figure}
\begin{figure}[h!]
\hspace{-.15cm}\includegraphics{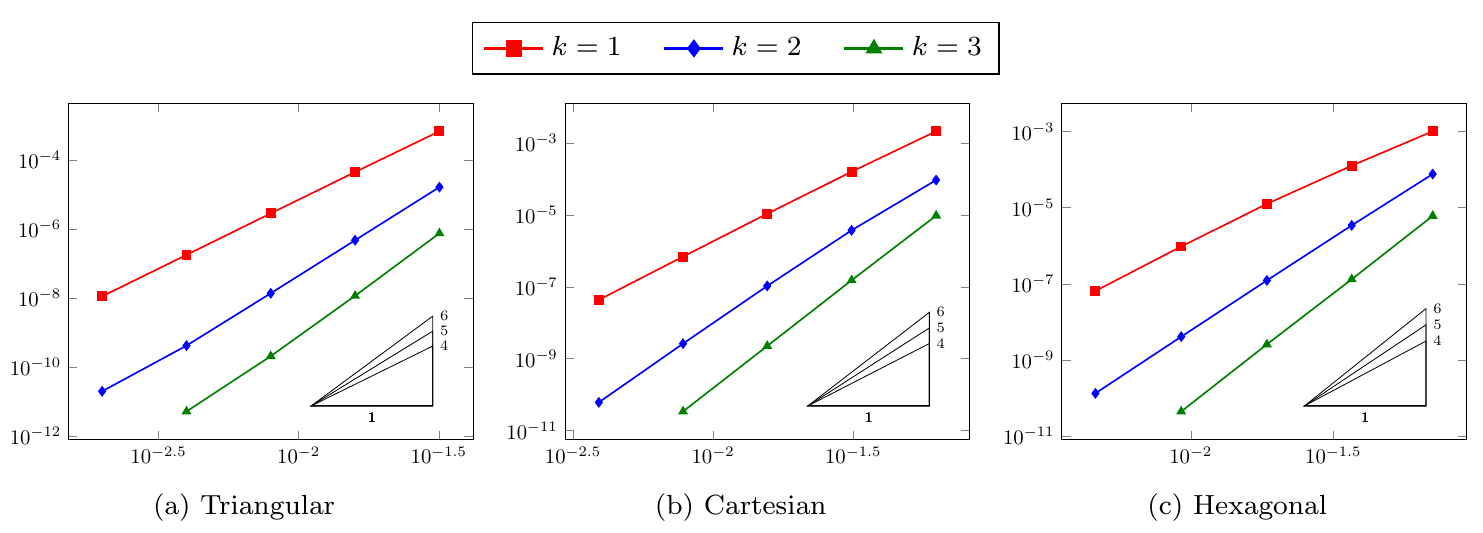}
\caption{$\|\pi^k_h u -  u_h\| $ vs.~$h$ for three different meshes}
\label{test:err_L2}
\end{figure}
{%
Also, we check the numerical convergence of the discrete energy \eqref{discr_en} with four uniformly refined triangular meshes, and a polynomial degree $k$ ranging from 1 to 4. As Table \ref{square_table} shows, only three refinements are necessary when $k\in\{2,3,4\}$ to achieve a five-significant-digit precision for the limit of the discrete energy.
\begin{table}
  \caption{Convergence of $\rm E(\ulr u_h)$ with four uniform mesh refinements for each polynomial degree \mbox{$k\in\{1,2,3,4\}$}. The number of triangular elements is given by $N$.}
  \label{square_table}{ 
    \begin{tabular}{|c|c|c|c|c|}
      \hline
      \diagbox[width=1.25cm, height=.55cm]{}{} & {$N = 56$} & {$N=224$} & {$N=896$} & {$N=3584$} \\ \hline
      $k=1$ & -1.662960e-03 & -1.635846e-03 & -1.632895e-03 & -1.632670e-03 \\ \hline
      $k=2$ & -1.637918e-03 & -1.632707e-03 & -1.632652e-03 & -1.632653e-03 \\ \hline
      $k=3$ & -1.632412e-03 & -1.632634e-03 & -1.632652e-03 & -1.632653e-03 \\ \hline
      $k=4$ & -1.632433e-03 & -1.632638e-03 & -1.632652e-03 &  -1.632653e-03 \\ \hline
    \end{tabular}
  }
\end{table}

}

We {finally} test the robustness of the variant of the HHO method based on the local bilinear form \eqref{forma_locale.eta} with respect to the user-dependent parameter $\eta$.
In Figures~\ref{eta_en} and~\ref{eta_l2} we plot, respectively, the energy- and $L^2$-norms of the error when $\eta$ varies from $10^{-3}$ to $10^3$ on fixed meshes corresponding to the third refinement level of the ones in Figure~\ref{meshes}.
From these plots, the robustness of the method can be appreciated, as the energy error spans only two orders of magnitude and the $L^2$-error spans four orders of magnitude, while the user-dependent parameter $\eta$ spans six orders of magnutide.
\begin{figure}[h!]
\includegraphics{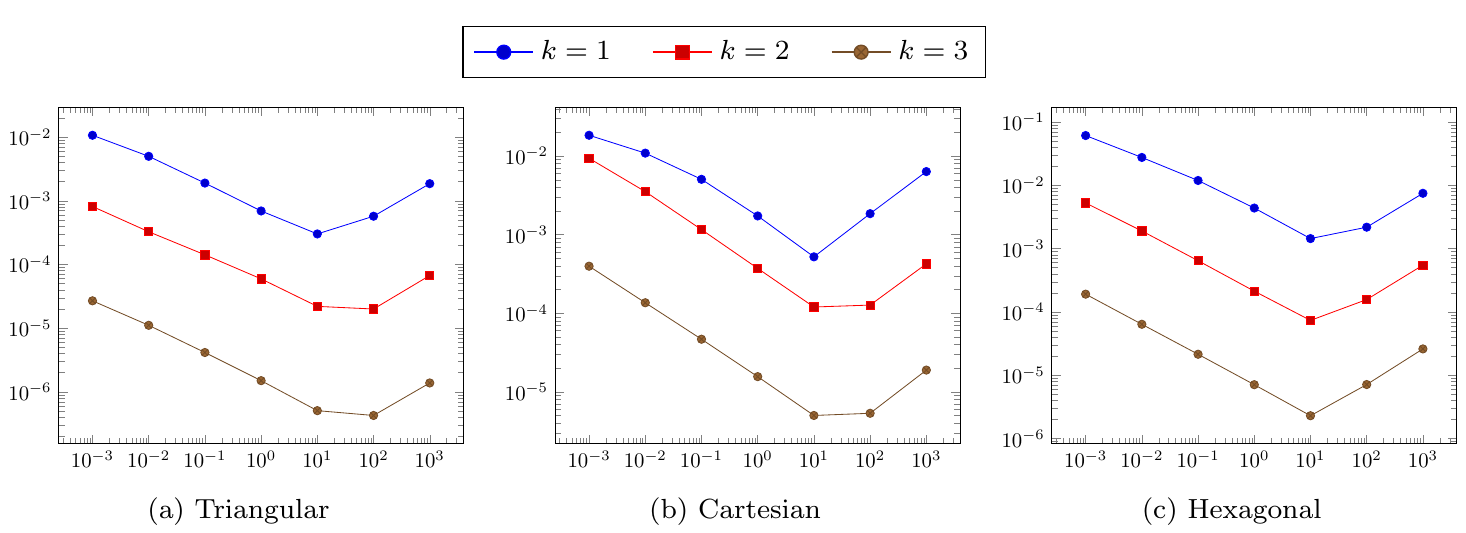}
\caption{$\|\wh{\ulr u}_h - \ulr u_h\|_{\mr{a},h}$ vs.~$\eta$ for the third refinement level of the Triangular, Cartesian, and Hexagonal meshes}
\label{eta_en}
\end{figure}
\begin{figure}[h!]
\includegraphics{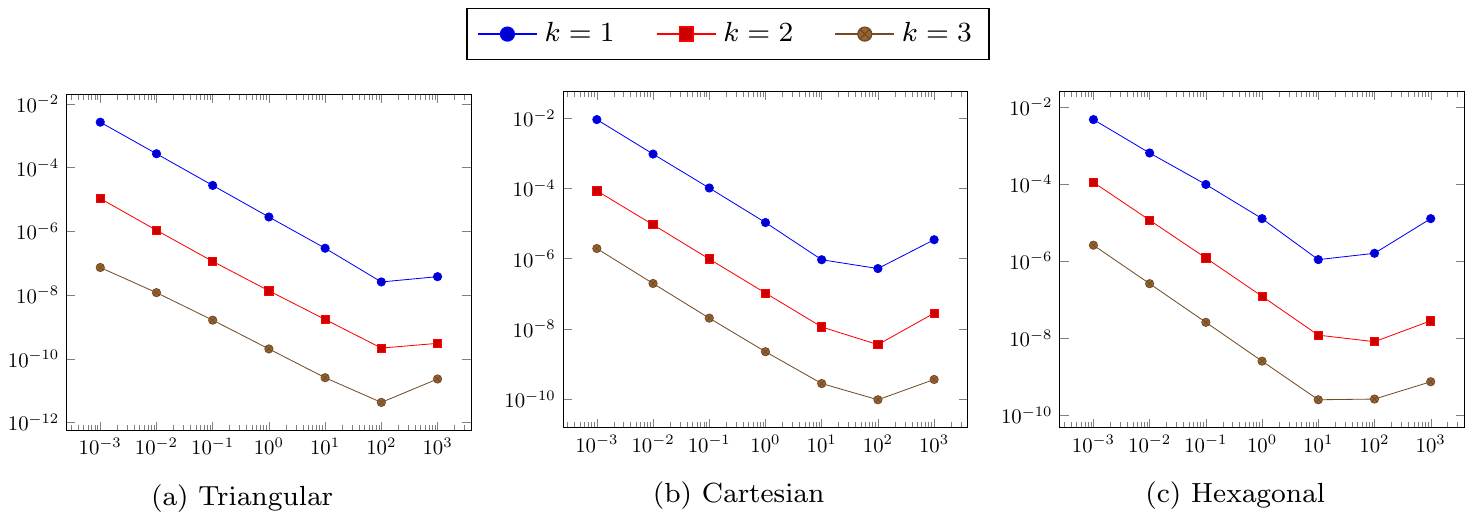}
\caption{$\|\pi^k_h u -  u_h\| $ vs.~$\eta$ for the third refinement level of the Triangular, Cartesian, and Hexagonal meshes}
\label{eta_l2}
\end{figure}
%\vspace{-1cm}
{%
\subsubsection{L-shaped domain}
We now consider the domain $\Omega = \left( (0,1) \times (0,1)\right) \setminus \left ( \left({1}/{2},1\right)\times\left({1}/{2},1\right)\right)$, and a uniform load \mbox{$f\equiv 1$}. Figure \ref{L_shaped} shows the numerical solution obtained for $k=3$ on five nested, uniformly refined triangular meshes. Since a closed-form solution is not available in this case, we check the numerical convergence of the discrete energy on the above-mentioned meshes, again for a polynomial degree $k$ ranging from $1$ to $4$. As Table \ref{L_shaped_table} shows, this energy converges numerically towards a value given by -2.80e-05 to two significant digits. This allows to conclude that the method converges even in situations where such singular geometries are considered. As expected, since biharmonic regularity is not satisfied in this case (because of the domain geometry), convergence is slower than in Table \ref{square_table}, and five mesh refinements are required to achieve a two-significant-digit precision for the limit. For further details, we refer the reader to Section \ref{sec:conclusioni:comp.cost.method}.
\begin{figure}
  \centering
  \subfloat[]{\includegraphics[keepaspectratio=true,scale=.15]{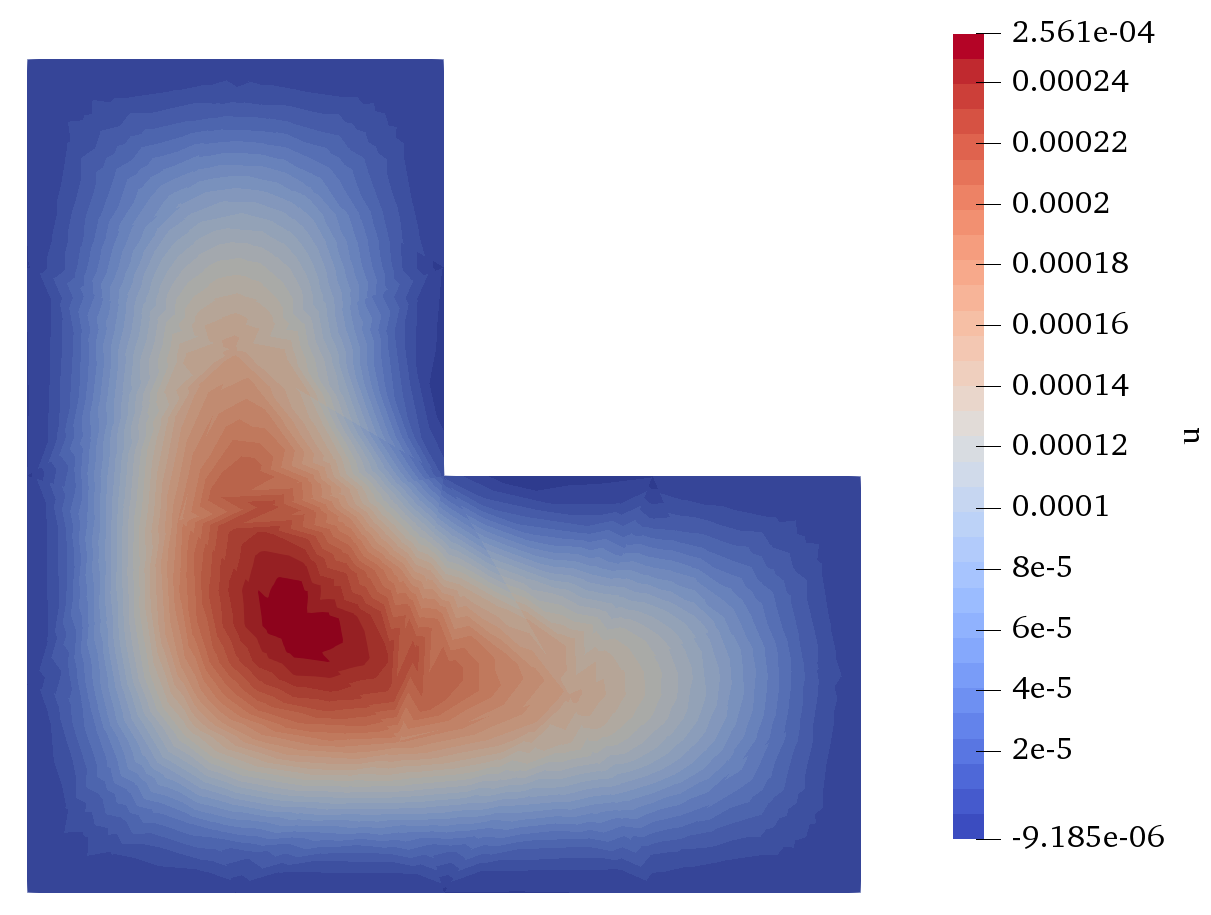}}
  \hspace{1cm}
  \raisebox{.75ex}{
    \subfloat[]{\includegraphics[keepaspectratio=true,scale=.15]{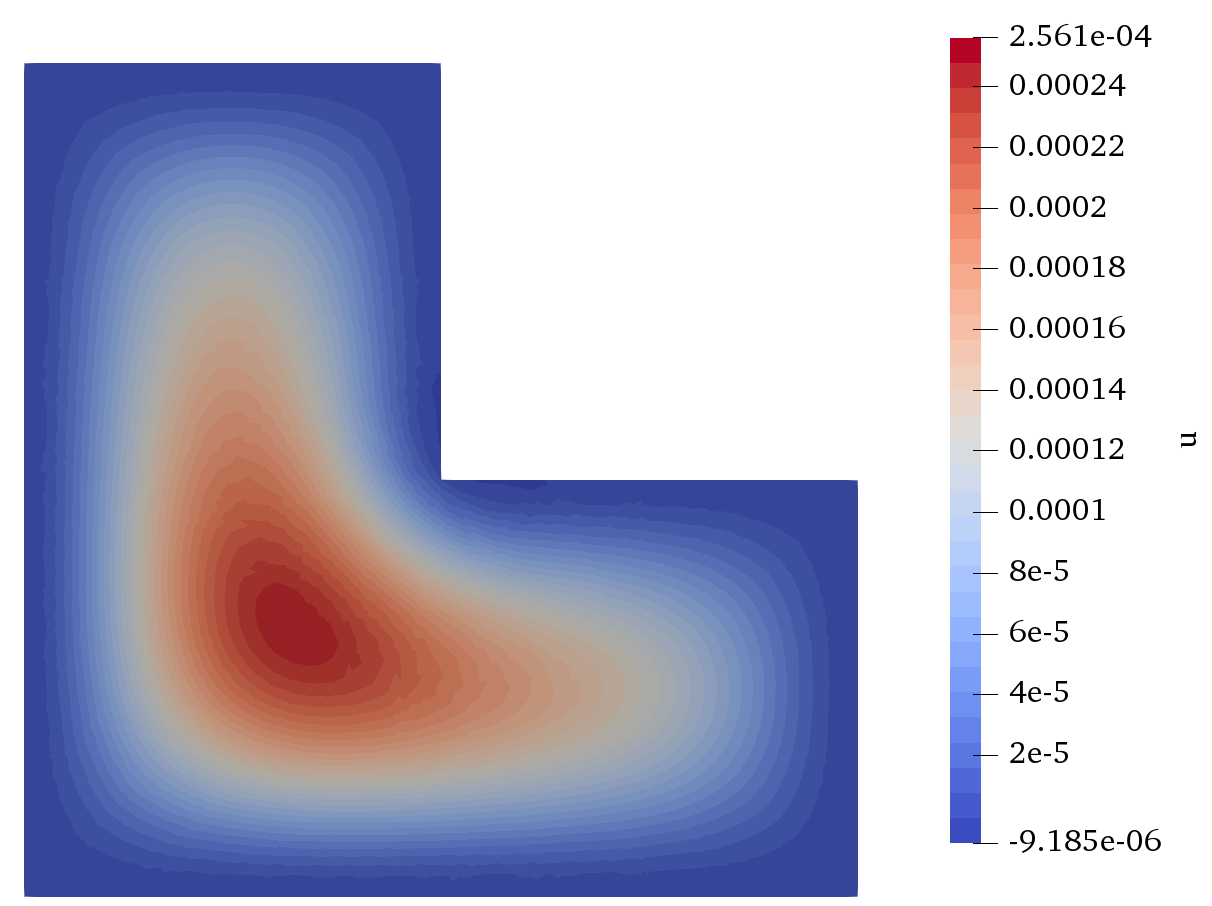}}}
  \hspace{1cm}
  \subfloat[]{\includegraphics[keepaspectratio=true,scale=.15]{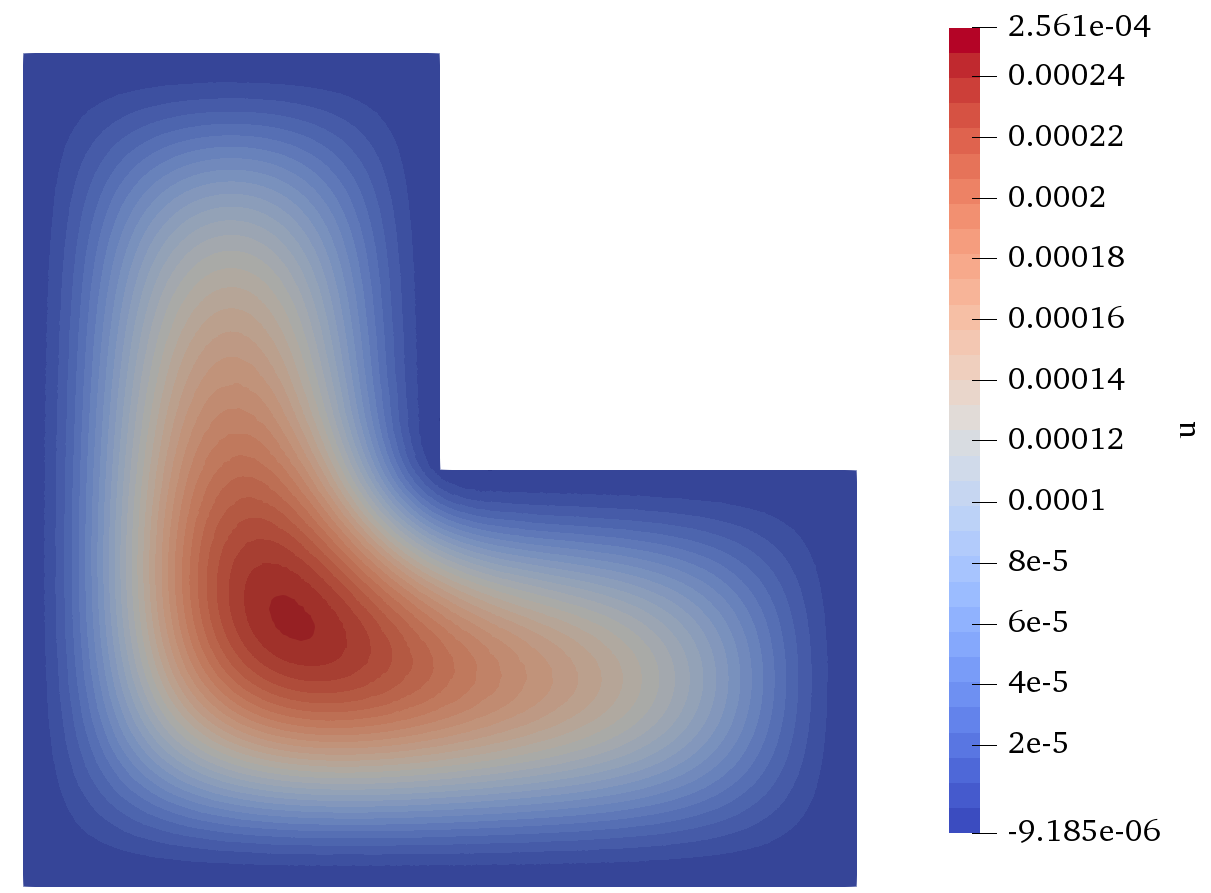}}
  \hspace{1cm}
  \subfloat[]{\includegraphics[keepaspectratio=true,scale=.15]{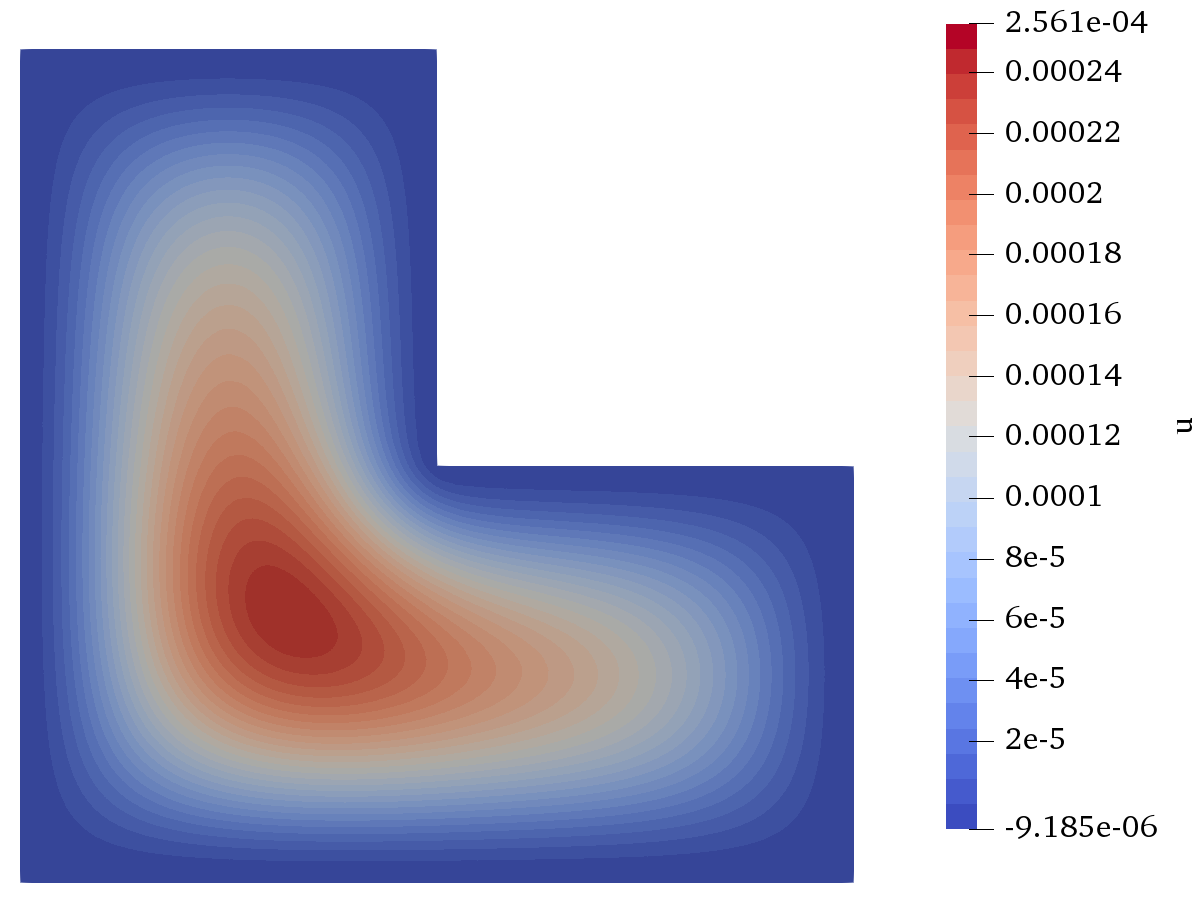}}
  \hspace{1cm}
  \subfloat[]{\includegraphics[keepaspectratio=true,scale=.15]{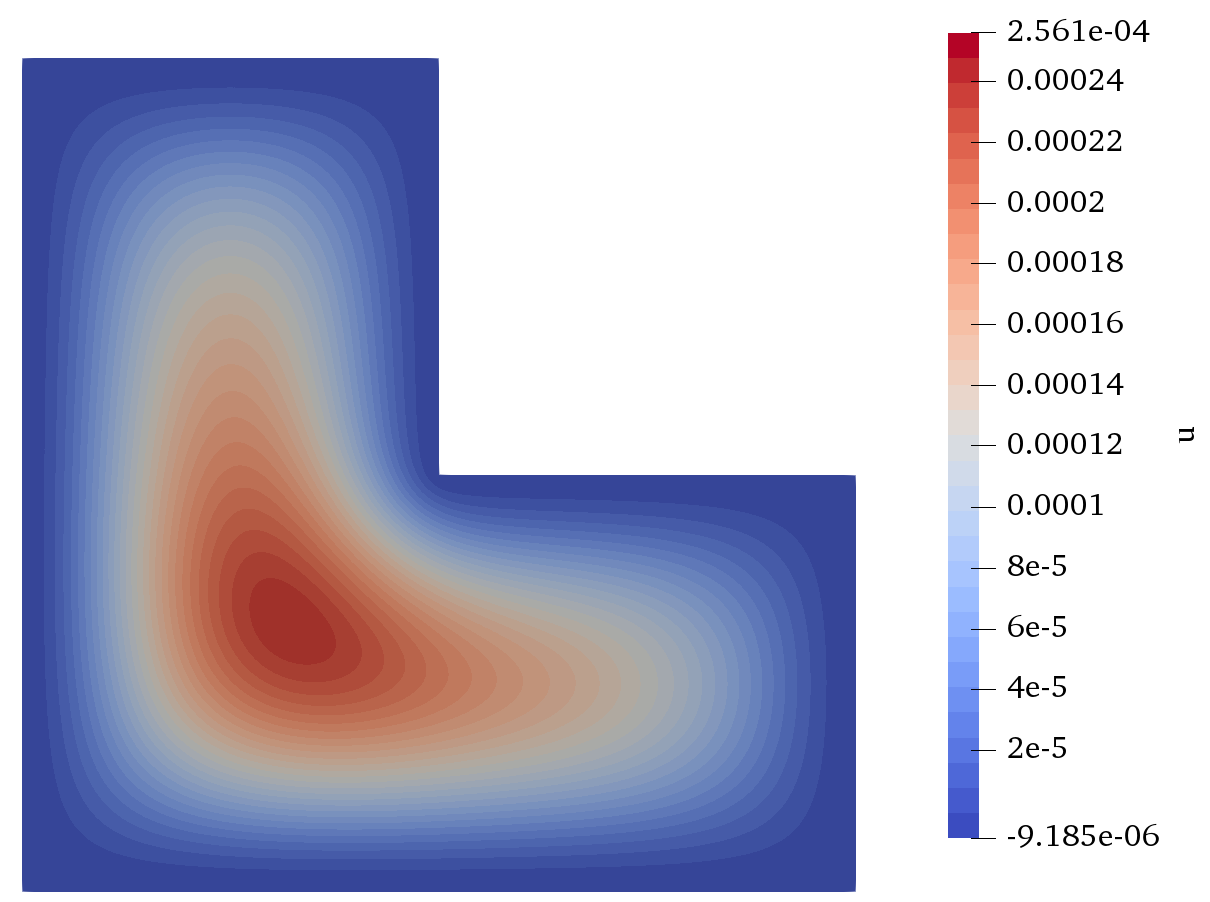}}
  \caption{Numerical solution obtained for a uniform load $f \equiv 1$, on five uniformly refined triangular meshes (with $N$ elements) of the domain, with $k=3$. Case (a): $N=34$; case (b): $N=136$; case (c): $N=544$; case (d): $N=2176$; case (e): $N=8704$.}
  \label{L_shaped}
\end{figure} 
\begin{table}
 \caption{Convergence of $\rm E(\ulr u_h)$ with uniform mesh refinements for each polynomial degree \mbox{$k\in\{1,2,3,4\}$}. The number of triangular elements is given by $N$.}
  \label{L_shaped_table}{ 
    \begin{tabular}{|c|c|c|c|c|c|}
      \hline
      \diagbox[width=1.25cm, height=.55cm]{}{} & {$N = 34$} & {$N=136$} & {$N=544$} & {$N=2176$} & {$N=8704$} \\ \hline
      $k=1$ & -4.208744e-05 & -3.071276e-05 & -2.885137e-05 & -2.833621e-05 & -2.813136e-05 \\ \hline
      $k=2$ & -3.167765e-05 & -2.945556e-05 & -2.858722e-05 & -2.824400e-05 & -2.809123e-05  \\ \hline
      $k=3$ & -2.944060e-05 & -2.868230e-05 & -2.828896e-05 & -2.811198e-05 & -2.803012e-05  \\ \hline
      $k=4$ & -2.899953e-05 & -2.845505e-05 & -2.818988e-05 &  -2.806669e-05 & -2.800896e-05 \\ \hline
    \end{tabular}
  }
\end{table}
}
%

%------------------------------------------------------------------------------%
%------------------------------------------------------------------------------%

\section{Local principle of virtual work and laws of action-reaction}\label{sec:acreac}

Let a mesh element $T\in\cc T_h$ be fixed.
At the continuous level, the deflection field $u$ satisfies, for all $v\in\bb P^k(T)$,
\begin{subequations}
\begin{equation}\label{cont_loc_eq}
  a_{| T}(u,v) + \sum_{F\in \cc F_T} (\bm M_T\bm n_{TF},\grad v)_{F} - \sum_{F\in \cc F_T} (\divb\bm M_T\cdot \bm n_{TF},v)_{F} = (f,v)_T,
\end{equation}
where $\bm M_T \coloneq -\bb A_T \grad^2 u$. Equation \eqref{cont_loc_eq} expresses the principle of \emph{virtual work} in the context
of Kirchhoff--Love plates, written for the mesh element element $T$ and with $\bb P^k(T)$ as the space of virtual deflections.
The quantities $\bm M_T \bm n_{TF}$ and $\divb\bm M_T \cdot \bm n_{TF}$ are internal actions and represent, respectively,
the moment and the (scalar) shear force exerted on the face $F \in \cc F_T$ by the adjacent
element. This can be viewed as a two-dimensional counterpart of Cauchy's hypothesis that the contact force density $\bm c$
at a point of an oriented surface $\Sigma$ in a three-dimensional continuum depends on $\Sigma$ only through the normal $\bm n$
to $\Sigma$ at that point; indeed, this implies that there is a second-order tensor field,
the Cauchy stress $\bm S$, such that, at each point of the three-dimensional body, $\bm c = \bm S \bm n$.

For an interface $F\in\cc F_{T_1}\cap\cc F_{T_2}$, with $T_1$, $T_2$ distinct elements of $\cc T_h$,
since $\bm n_{T_2 F} = - \bm n_{T_1 F}$,
both moments and shear forces obey the following \emph{laws of action-reaction}:
%that $\bm n_{T_2 F} = - \bm n_{T_1 F}$,
\begin{equation}\label{cont}
  \bm M_{T_1} \bm n_{T_1F} + \bm M_{T_2} \bm n_{T_2F} = \bm{0},\qquad
  \divb\bm M_{T_1} \cdot \bm n_{T_1F} + \divb\bm M_{T_2} \cdot \bm n_{T_2F} = 0.
\end{equation}
\end{subequations}
The denomination for equations \eqref{cont} emphasizes the fact that the moment (resp., shear force) exerted on element $T_1$ by element $T_2$ through the common interface $F$ is the opposite of the moment (resp., shear force) exerted on $T_2$ by $T_1$ through $F$.

We next show that the solution to discrete problem~\eqref{pb_discreto} satisfies discrete counterparts of~\eqref{cont_loc_eq} and~\eqref{cont}.
This requires a reformulation of the stabilization contribution in terms of the differences between face-based and element-based discrete unknowns.
Define the space
$$
\ulr D_{\partial T}^k \coloneq \left(\bigtimes_{F\in \cc F_T}\bb P^k(F)^2\right) \times \left(\bigtimes_{F\in\cc F_T} \bb P^{k}(F)\right)
$$
and the boundary difference operator $\ulr\delta_{\partial T}^k \colon \ulr U_T^k \to \ulr D_{\partial T}^k$ such that, for all $\ulr v_T\in\ulr U_T^k$,
$$
\ulr\delta_{\partial T}^k\ulr v_T
\equiv \big( (\bm{\delta}_{\grad,F}^k\ulr v_T)_{F\in\cc F_T}, (\delta_F^k\ulr v_T)_{F\in\cc F_T}\big)
\coloneq \big( (\bm v_{\grad,F}-\grad v_T)_{F\in \cc F_T}, (v_F - v_T)_{F\in \cc F_T}\big).
%\eqcolon \left( (\bm{\delta}_{\grad,F}^k\ulr v_T)_{F\in\cc F_T}, (\delta_F^k\ulr v_T)_{F\in\cc F_T} \right).
$$

\begin{proposition}[Boundary difference reformulation of $\mr s_T$]
  The local stabilization bilinear form $\mr s_T$ defined by \eqref{stabiliz} can be rewritten, for all $\ulr u_T,\ulr v_T\in\ulr U_T^k$,
  \begin{equation}\label{stabiliz'}
  \mr s_T(\ulr u_T, \ulr v_T) = \mr s_T((0,\ulr\delta_{\partial T}^k\ulr u_T),(0,\ulr\delta_{\partial T}^k\ulr v_T)).
%  \tilde{\mr s}_T(\ulr\delta_{\partial T}^k \ulr u_T, \ulr\delta^k_{\partial T} \ulr v_T),
  \end{equation}
%  where the bilinear form $\tilde{\mr s}_T \colon \ulr D_{\partial T}^k \times \ulr D_{\partial T}^k \to \bb R$ is such that, for all $\ulr \alpha_{\de T},\ulr \beta_{\de T}\in\ulr D_{\de T}^k$,
%  \begin{equation}\label{stabiliz'}
%    \tilde{\mr s}_T(\ulr\alpha_{\de T},\ulr\beta_{\de T})
%    \coloneq {\mr s}_T((0,\ulr\alpha_{\de T}),(0,\ulr\beta_{\de T})).
%  \end{equation}
\end{proposition}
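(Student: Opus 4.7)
The plan is to reduce the claim to a single structural identity for the reconstruction operator, namely
$$
\pT \ulr v_T = \pT(0,\ulr\delta_{\partial T}^k\ulr v_T) + v_T \qquad \forall\, \ulr v_T\in\ulr U_T^k,
$$
and then substitute it term-by-term into the definition \eqref{stabiliz} of $\mr s_T$. The identity above should be the main technical step; everything else is essentially bookkeeping.

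To prove the identity, I would rely on the equivalent formulation \eqref{reconstruction_locale2}, which reads
$$
a_{|T}(\pT\ulr v_T,w) = a_{|T}(v_T,w) - \sum_{F\in\cc F_T}(\bm v_{\grad,F}-\grad v_T,\bm M_{w,T}\bm n_{TF})_F + \sum_{F\in\cc F_T}(v_F-v_T,\divb\bm M_{w,T}\cdot\bm n_{TF})_F.
$$
On the other hand, applying the original definition \eqref{reconstruction_locale} to the shifted tuple $(0,\ulr\delta_{\partial T}^k\ulr v_T)$, the volumetric term vanishes and the face contributions reproduce exactly the boundary sums above. Subtracting the two relations, one obtains that $a_{|T}(\pT\ulr v_T - \pT(0,\ulr\delta_{\partial T}^k\ulr v_T) - v_T,w)=0$ for every $w\in\bb P^{k+2}(T)$. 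Since $\ker(\grad^2)=\bb P^1(T)$, the polynomial $p\coloneq \pT\ulr v_T - \pT(0,\ulr\delta_{\partial T}^k\ulr v_T) - v_T$ lies in $\bb P^1(T)$. Invoking the closure condition \eqref{fermeture} for each of the two tuples (which gives $\pi^1_T\pT\ulr v_T = \pi^1_T v_T$ and $\pi^1_T\pT(0,\ulr\delta_{\partial T}^k\ulr v_T)=0$) yields $\pi^1_T p=0$, and since $p\in\bb P^1(T)$ we conclude $p=0$.

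Armed with this identity, the three arguments appearing in \eqref{stabiliz} rewrite cleanly as
$$
\pT\ulr v_T - v_T = \pT(0,\ulr\delta_{\partial T}^k\ulr v_T),\qquad
\grad\pT\ulr v_T - \bm v_{\grad,F} = \grad\pT(0,\ulr\delta_{\partial T}^k\ulr v_T) - \bm\delta_{\grad,F}^k\ulr v_T,
$$
$$
\pT\ulr v_T - v_F = \pT(0,\ulr\delta_{\partial T}^k\ulr v_T) - \delta_F^k\ulr v_T,
$$
and each right-hand side coincides with what is obtained by writing the corresponding argument for the tuple $(0,\ulr\delta_{\partial T}^k\ulr v_T)$ (whose element component is $0$ and whose face components are, by definition, $\bm\delta_{\grad,F}^k\ulr v_T$ and $\delta_F^k\ulr v_T$). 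Doing the same for $\ulr u_T$ and plugging into \eqref{stabiliz} gives \eqref{stabiliz'}. The only genuine obstacle is the uniqueness argument above; everything after it is a direct substitution.
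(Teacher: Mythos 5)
Your proof is correct, and its second half (the three term-by-term rewritings and their substitution into \eqref{stabiliz}) coincides with the paper's argument. The only difference lies in how you establish the key identity $\pT\ulr v_T - v_T = \pT(0,\ulr\delta_{\partial T}^k\ulr v_T)$: the paper obtains it in one line by combining the linearity of $\pT$ with Proposition~\ref{polynomial_consistency}, i.e.\ $\pT\ulr I_T^k v_T=\varpi_T^{k+2}v_T=v_T$ since the energy projector preserves polynomials of degree up to $k+2$, whereas you re-derive it from scratch via \eqref{reconstruction_locale2}, the definition \eqref{reconstruction_locale} applied to the shifted tuple, and a uniqueness argument based on the closure condition \eqref{fermeture}. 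Your route is self-contained (it never invokes the link with $\varpi_T^{k+2}$), at the cost of being slightly longer; the paper's is shorter because it reuses an already-proved proposition. One small point to make explicit in your uniqueness step: from $a_{|T}(p,w)=0$ for all $w\in\bb P^{k+2}(T)$ you should take $w=p$ (legitimate since $v_T\in\bb P^k(T)\subset\bb P^{k+2}(T)$, so $p\in\bb P^{k+2}(T)$) and use the uniform ellipticity of $\bb A_T$ to conclude $\grad^2 p=\bm 0$, hence $p\in\bb P^1(T)$; then $\pi_T^1 p=0$ indeed forces $p=0$.
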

\begin{proof}
As a consequence of~\eqref{eq:pTIT=varpi}, for all $v_T\in \bb P^{k}(T) \subset \bb P^{k+2}(T)$ it holds
\begin{equation}\label{pTvT}\pT \ulr I^k_T v_T = \varpi^{k+2}_T v_T = v_T,\end{equation}
where we have used the fact that, as a projector, $\varpi^{k+2}_T$ preserves polynomials up to degree $(k+2)$.
Now, using \eqref{pTvT} and the linearity of $\pT$, we have 
\begin{equation}\label{un}
  \pT \ulr v_T - v_T = \pT(\ulr v_T - \ulr I^k_T v_T)
  = \pT(0,\ulr{\delta}^k_{\de T}\ulr v_T).
\end{equation}
Also, for all $F\in \cc F_T$, it holds
\begin{equation}\label{deux}
\grad \pT\ulr v_T - \bm v_{\grad,F} = \grad( \pT\ulr v_T - \pT\ulr I^k_T v_T) - (\bm v_{\grad,F}-\grad v_T) = 
\grad \pT(0,\ulr{\delta}^k_{\de T} \ulr v_T) - \bm\delta^k_{\grad,F} \ulr v_T
\end{equation}
and, analogously,
\begin{equation}\label{trois}
\pT \ulr v_T - v_F = (\pT \ulr v_T - \pT \ulr I^k_T v_T) - (v_F - v_T) = \pT(0,\ulr{\delta}^k_{\de T}\ulr v_T) - \delta^k_F\ulr v_T.
\end{equation}
Using~\eqref{un},~\eqref{deux}, and~\eqref{trois} respectively in the first, second, and third term in the right-hand side of~\eqref{stabiliz}, the conclusion follows.
\end{proof}
Define now the residual operator
$$
\ulr R_{\de T}^k \equiv \left( (\bm R^k_{\grad, F})_{F\in\cc F_T},(R_F^{k})_{F\in\cc F_T}\right) \colon \ulr U^k_T \to \ulr D^k_{\de T}
$$
such that, for all $\ulr v_T \in \ulr U^k_T$ and all $\ulr\alpha_{\de T} \equiv ((\bm\alpha_{\grad,F})_{F\in\cc F_T}, (\alpha_F)_{F\in\cc F_T}) \in \ulr D^k_{\de T}$,
\begin{equation}\label{residuo}
(\ulr R^k_{\de T}\ulr v_T,\ulr\alpha_{\de T})_{0,\de T} \coloneq \! \!
\sum_{F\in \cc F_T} \!\! \Big( (\bm R^k_{\grad,F} \ulr v_T,\bm\alpha_{\grad,F})_F + (R^{k}_F\ulr v_T, \alpha_F)_F\Big) = 
\mr s_T((0,\ulr\delta_{\partial T}^k\ulr v_T),(0,\ulr\alpha_{\de T})).
\end{equation}
Problem~\eqref{residuo} is well-posed as a consequence of the Riesz representation theorem for the $L^2$-like product in the left-hand side.
\begin{lemma}[Local principle of virtual work and laws of action-reaction]
  Denote by $\ulr u_h\in \ulr U^k_{h,0}$ the unique solution to~\eqref{pb_discreto} and, for all $T\in\cc T_h$ and all $F\in\cc F_T$, define  the \emph{discrete moment and shear force}
\begin{equation}\label{fluxes}
\begin{aligned}
  \bm{\mathcal{M}}^k_{TF}(\ulr u_T) & \coloneq -\left( \bb (\bb A \grad^2 \pT \ulr u_T)\bm n_{TF} + \bm R^k_{\grad, F}\ulr u_T\right),\\
\mathcal{S}^k_{TF}(\ulr u_T) & \coloneq -\,\mr{\bf{div}}\,\bb A \grad^2 \pT \ulr u_T \cdot \bm n_{TF} + R^{k}_F\ulr u_T.
\end{aligned}
\end{equation}
Then, the following discrete counterparts of \eqref{cont_loc_eq} and \eqref{cont} hold, respectively:
For any mesh element $T\in\cc T_h$,
\begin{subequations}
  \begin{equation}\label{discr_loc_eq}
  a_{| T}(\pT \ulr u_T, v_T)\, + \sum_{F\in\cc F_T}  (\bm{\mathcal{M}}^k_{TF}(\ulr u_T), \grad v_T)_F \, - \sum_{F\in\cc F_T}  (\mathcal{S}^k_{TF}(\ulr u_T), v_T)_F = (f,v_T)_T,
  \quad\forall v_T \in \bb P^{k}(T),
  \end{equation}
  and, for any interface $F\in\cc F_{T_1}\cap\cc F_{T_2}$, with $T_1$, $T_2$ distinct elements of $\cc T_h$,
\begin{equation}\label{az_reaz}
  \bm{\mathcal{M}}^k_{T_1 F}(\ulr u_{T_1})+\bm{\mathcal{M}}^k_{T_2 F}(\ulr u_{T_2}) = \bm 0,\qquad
  \mathcal{S}^k_{T_1 F}(\ulr u_{T_1})+\mathcal{S}^k_{T_2 F}(\ulr u_{T_2}) = 0.
\end{equation}
\end{subequations}
\end{lemma}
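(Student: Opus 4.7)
The idea is to extract each of the four equations in~\eqref{discr_loc_eq}--\eqref{az_reaz} from the global discrete problem~\eqref{pb_discreto} by testing against suitably localized elements of $\ulr U_{h,0}^k$, and then to rewrite the resulting identities by means of the reformulations~\eqref{reconstruction_locale2} of $\pT$ and~\eqref{stabiliz'}--\eqref{residuo} of $\mr s_T$, so as to make the discrete moments $\bm{\cc M}_{TF}^k$ and shear forces $\cc S_{TF}^k$ appear naturally.

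\textbf{Local principle of virtual work~\eqref{discr_loc_eq}.} Fix $T\in\cc T_h$ and $v_T\in\bb P^k(T)$, and plug into~\eqref{pb_discreto} the test function $\ulr v_h\in\ulr U_{h,0}^k$ all of whose components vanish, except the element unknown on $T$ which is set equal to $v_T$. Only the contribution from $T$ survives, so $\mr a_T(\ulr u_T,\ulr v_T)=(f,v_T)_T$ with $\ulr v_T=(v_T,\bm 0,0)$. For this $\ulr v_T$ we first use~\eqref{reconstruction_locale2} and the symmetry of $a_{|T}$ (taking $w=\pT\ulr u_T$ and swapping the two arguments) to obtain
\begin{equation*}
a_{|T}(\pT\ulr u_T,\pT\ulr v_T)
= a_{|T}(\pT\ulr u_T,v_T)
-\sum_{F\in\cc F_T}(\grad v_T,\bb A_T\grad^2\pT\ulr u_T\,\bm n_{TF})_F
+\sum_{F\in\cc F_T}(v_T,\divb\bb A_T\grad^2\pT\ulr u_T\cdot\bm n_{TF})_F.
\end{equation*}
Secondly, since $\ulr\delta_{\de T}^k\ulr v_T=((-\grad v_T)_{F\in\cc F_T},(-v_T)_{F\in\cc F_T})$, the reformulation~\eqref{stabiliz'} together with the definition~\eqref{residuo} of the residual operator yields
\begin{equation*}
\mr s_T(\ulr u_T,\ulr v_T)=-\sum_{F\in\cc F_T}(\bm R_{\grad,F}^k\ulr u_T,\grad v_T)_F-\sum_{F\in\cc F_T}(R_F^k\ulr u_T,v_T)_F.
\end{equation*}
Summing these two identities and regrouping face terms according to the definitions~\eqref{fluxes} of $\bm{\cc M}_{TF}^k(\ulr u_T)$ and $\cc S_{TF}^k(\ulr u_T)$ gives~\eqref{discr_loc_eq}.

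\textbf{Laws of action--reaction~\eqref{az_reaz}.} Fix an interface $F\in\cc F_{T_1}\cap\cc F_{T_2}$. To obtain the moment equation, plug into~\eqref{pb_discreto} the test function supported only by the gradient unknown on $F$, equal to an arbitrary $\bm\mu\in\bb P^k(F)^2$; this is admissible since $F$ is not a boundary face. The load side vanishes (no element unknown is active), so $\mr a_{T_1}(\ulr u_{T_1},\ulr v_{T_1})+\mr a_{T_2}(\ulr u_{T_2},\ulr v_{T_2})=0$. Using now~\eqref{reconstruction_locale} with all element and trace unknowns zero one finds $a_{|T_i}(\pT\ulr u_{T_i},\pT\ulr v_{T_i})=(\bm\mu,\bb A_{T_i}\grad^2\pT\ulr u_{T_i}\,\bm n_{T_iF})_F$, whereas~\eqref{stabiliz'}--\eqref{residuo} give $\mr s_{T_i}(\ulr u_{T_i},\ulr v_{T_i})=(\bm R_{\grad,F}^k\ulr u_{T_i},\bm\mu)_F$; adding these two yields $\mr a_{T_i}(\ulr u_{T_i},\ulr v_{T_i})=-(\bm\mu,\bm{\cc M}_{T_iF}^k(\ulr u_{T_i}))_F$. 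Therefore $(\bm\mu,\bm{\cc M}_{T_1F}^k(\ulr u_{T_1})+\bm{\cc M}_{T_2F}^k(\ulr u_{T_2}))_F=0$ for every $\bm\mu\in\bb P^k(F)^2$, and since each discrete moment belongs to $\bb P^k(F)^2$ by construction, the first identity in~\eqref{az_reaz} follows. The shear identity is obtained verbatim by instead activating only the trace unknown on $F$, equal to an arbitrary $\sigma\in\bb P^k(F)$.

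\textbf{Expected difficulty.} Everything hinges on carrying the integrations by parts hidden in~\eqref{reconstruction_locale2} correctly and on tracking the signs of $\bm M_{w,T}=-\bb A_T\grad^2 w$ when matching them against the definitions~\eqref{fluxes}; once the boundary-difference rewriting of $\mr s_T$ via~\eqref{stabiliz'}--\eqref{residuo} is in hand, the rest is essentially a bookkeeping exercise, and the polynomial nature of $\bm{\cc M}_{TF}^k(\ulr u_T)$ and $\cc S_{TF}^k(\ulr u_T)$ in $\bb P^k(F)^2$ and $\bb P^k(F)$ respectively (needed to pass from ``tested against all polynomials'' to actual equality) is immediate from~\eqref{residuo} and from the fact that $\bb A_T$ is constant over $T$.
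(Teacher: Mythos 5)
Your proposal is correct and follows essentially the same route as the paper: both rest on the boundary-difference reformulation \eqref{stabiliz'} with the residual operator \eqref{residuo} and the integration-by-parts identity \eqref{reconstruction_locale2} applied with $w=\pT\ulr u_T$, followed by testing \eqref{pb_discreto} against functions supported on a single element or face unknown (you merely localize before, rather than after, writing the paper's global identity \eqref{discr_loc}). The sign bookkeeping and the observation that $\bm{\mathcal{M}}^k_{TF}(\ulr u_T)\in\bb P^k(F)^2$ and $\mathcal{S}^k_{TF}(\ulr u_T)\in\bb P^k(F)$, needed to pass from the vanishing pairings to \eqref{az_reaz}, both check out.
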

\begin{proof}
Recalling the definition \eqref{forma_locale} of $\mr a_T$, and using the reformulation~\eqref{stabiliz'} of ${\mr s}_T$ together with the definition~\eqref{residuo} of the residual operator, it is inferred from the discrete problem~\eqref{pb_discreto} that, for all $\ulr v_h\in\ulr U_{h,0}^k$, it holds
\begin{equation}\label{discr_loc1}
  \sum_{T\in\cc T_h}\left(
  a_{| T}(\pT \ulr u_T, \pT \ulr v_T)
  + (\ulr R^k_{\de T}\ulr u_T, \ulr{\delta}^k_{\de T}\ulr v_T)_{0,\de T}
  \right) = (f,v_h).
\end{equation}
Using the definition~\eqref{reconstruction_locale2} of $\pT \ulr v_T$ with $w = \pT \ulr u_T$ for the first term, and recalling~\eqref{residuo} and \eqref{fluxes}, we can rewrite~\eqref{discr_loc1} as
\begin{equation}\label{discr_loc}
  \sum_{T\in\cc T_h} \! \! \left(
  a_{| T}(\pT \ulr u_T, v_T)
  - \! \! \sum_{F\in \cc F_T} (\bm{\mathcal{M}}^k_{TF}(\ulr u_T), \bm v_{\grad,F} - \grad v_T)_F
  +\! \! \sum_{F\in \cc F_T} (\mathcal{S}^k_{TF}(\ulr u_T), v_F - v_T)_F
  \right) = (f,v_h).
\end{equation}
Thus, for a given mesh element $T\in\cc T_h$, choosing in \eqref{discr_loc} $\ulr v_h$ such that $v_T$ spans $\bb P^{k}(T)$, $v_{T'}\equiv 0$ for all $T'\in\cc T_h\setminus\{T\}$, $\bm v_{\grad,F}\equiv\bm 0$ and $v_F\equiv 0$ for all $F\in\cc F_h$ immediately yields \eqref{discr_loc_eq}.
Next, for a given interface $F\in\cc F_{T_1}\cap\cc F_{T_2}$, choosing in \eqref{discr_loc} $\ulr v_h$ such that $v_T\equiv 0$ for all $T\in\cc T_h$, $\bm v_{\grad,F'}\equiv\bm 0$ for all $F'\in\cc F_h\setminus\{F\}$, $v_F\equiv 0$ for all $F\in\cc F_h$, and letting $\bm v_{\grad,F}$ span $\bb P^k(F)^2$ yields the first equation in~\eqref{az_reaz}.
Similarly, choosing in \eqref{discr_loc} $\ulr v_h$ such that $v_T\equiv 0$ for all $T\in\cc T_h$, $\bm v_{\grad,F}\equiv\bm 0$ for all $F\in\cc F_h\setminus\{F\}$, $v_{F'}\equiv 0$ for all $F\in\cc F_h\setminus\{F\}$, and letting $v_F$ span $\bb P^k(F)$ yields the second equation in~\eqref{az_reaz}.
\end{proof}
%------------------------------------------------------------------------------%
%-------------------------------------------------------------------------------%

\section{Properties of the discrete bilinear form}\label{sec:analysis}

This section contains the proofs of the technical Lemmas~\ref{coerciv} and~\ref{stab_consist}.

\subsection{Local coercivity and boundedness}\label{sec:analysis:well-posedness}

\begin{proof}[Proof of Lemma~\ref{coerciv}]
Let a mesh element $T\in\cc T_h$ be fixed, and let $\ul{\mr v}_T\in\ul{\mr U}_T^k$. \\ \\
(i) \emph{Coercivity.}
Taking $ w=v_T \in \bb P^{k}(T)\subset\bb P^{k+2}(T) $ in (\ref{reconstruction_locale2}) gives
$$
\begin{aligned}
  a_{|T}( v_T,v_T)= a_{|T}(\pT \ul{\mr v}_T,v_T)
  + \!\! \sum_{F\in\cc F_T} \left(\bm v_{\grad,F}-\grad v_T,\bm M_{v_T}\bm n_{TF}\right)_F
  - \!\! \sum_{F\in \cc F_T} \big(v_F-v_T, \divb \bm M_{v_T} \cdot \bm n_{TF}\big)_F.
\end{aligned}
$$
Using the Cauchy--Schwarz inequality to bound the first term in the right-hand side, the Cauchy--Schwarz and discrete trace~\eqref{tr_discr} inequalities to bound the second, and the Cauchy--Schwarz, discrete trace~\eqref{tr_discr} and inverse~\eqref{inv} inequalities to bound the third, and simplifying we obtain:
\begin{equation}\label{primo pezzo}
  \|\bb A_T^{\nicefrac{1}{2}} \grad^2 v_T\|_T
  \lesssim
  \left(
  \|\bb A_T^{\nicefrac{1}{2}} \grad^2\pT \ul{\mr v}_T\|_T^2
  + \frac{\cc A_T^+}{h_T}\!\!\sum_{F \in \cc F_T}  \| \bm v_{\grad,F}-\grad v_T\|_F^2
  + \frac{\cc A_T^+}{h_T^3}\!\!\sum_{F\in \cc F_T} \| v_F-v_T\|_F^2
  \right)^{\nicefrac12}.
\end{equation}
It remains to estimate the boundary terms inside the parentheses using the $\|{\cdot}\|_{\mr{a},T}$-seminorm.

(i.a) \emph{Bound on $\frac{\cc A_T^+}{h_T}\sum_{F\in\cc F_T}\|\bm v_{\grad,F}-\grad v_T\|_F^2$.}
For all $F\in\cc F_T$, inserting $\pm\bm\pi_F^k\grad\left(\pT \ul{\mr v}_T - \pi^k_T\pT \ul{\mr v}_T\right)$ into the norm and using the linearity of $\bm\pi_F^k$ and the fact that it preserves polynomials in $\bb P^{k}(F)^2$ as a projector, we obtain
\begin{equation}\label{b}
\begin{aligned}
  &\|\bm v_{\grad,F} - \grad v_T \|_F
  \\
  &\quad=
  \|\bm\pi_F^k\left(\bm v_{\grad,F}-\grad \pT \ul{\mr v}_T\right)
  + \bm\pi_F^k\grad \left(\pT \ul{\mr v}_T - \pi_T^k\pT\ul{\mr v}_T\right)
  + \grad \left(\pi_T^k\pT\ul{\mr v}_T - v_T\right)
  \|_F
  \\
  &\quad\lesssim
  \| \bm\pi_F^k\left(\bm v_{\grad,F}-\grad  \pT \ul{\mr v}_T\right)\|_F
  {+} \|\bm\pi_F^k\grad( \pT \ul{\mr v}_T - \pi^k_T    \pT \ul{\mr v}_T)\|_F
  {+} \|\grad \pi^k_T  ( \pT \ul{\mr v}_T-v_T)\|_F
  \\
  &\quad\eqcolon \mathfrak T_1 + \mathfrak T_2 + \mathfrak T_3,
\end{aligned}
\end{equation}
where we have used the triangle inequality to pass to the second line.
By the definition~\eqref{stabiliz} of $\mr{s}_T$, we readily infer that
$$
h_T^{-\nicefrac{1}{2}} \sqrt{\cc A_T^+} |\mathfrak T_1| \lesssim
\|\ul{\mr v}_T\|_{\mr{a},T}.
$$
Using the $L^2(F)^2$-boundedness of $\bm\pi_F^k$ followed by the discrete trace inequality \eqref{tr_discr}, we can write $|\mathfrak T_2|\lesssim h_T^{-\nicefrac12}\|\grad( \pT \ul{\mr v}_T - \pi^k_T    \pT \ul{\mr v}_T)\|_T$.
Then, by the approximation properties \eqref{stima} of $\pi^k_T  $ with $l=k$, $m=1$, and $s=2$, we infer that
\begin{equation}\label{eq:stability:est.T2}
|\mathfrak T_2|
\lesssim h_T^{\nicefrac{1}{2}} |\pT\ul{\mr v}_T|_{H^2(T)}
\lesssim h_T^{\nicefrac{1}{2}} \|\grad^2 \pT\ul{\mr v}_T\|_T,
\end{equation}
so that
$$
h_T^{-\nicefrac{1}{2}}\sqrt{\cc A_T^+} |\mathfrak T_2|  \lesssim  \|\ul{\mr v}_T\|_{\mr{a},T}.
$$
{Notice, in passing,  that in the second bound in \eqref{eq:stability:est.T2} we have used the fact that $k \ge 1$.}
Finally, the third term in the right-hand side of~\eqref{b} can be estimated using the discrete trace \eqref{tr_discr} and inverse \eqref{inv} inequalities together with the definition~\eqref{stabiliz} of $\rm s_T$ as follows:
$$
h_T^{-\nicefrac{1}{2}}\sqrt{\cc A_T^+} |\mathfrak T_3|
\lesssim h_T^{-2}\sqrt{\cc A_T^+}\|\pi^k_T  ( \pT \ul{\mr v}_T-v_T)\|_T
\le \|\ul{\mr v}_T\|_{\mr{a},T}.
$$
Hence, multiplying \eqref{b} by $h_T^{-\nicefrac{1}{2}}\sqrt{\cc A^+_T}$, squaring, summing over $F\in\cc F_T$, using the above estimates for $\mathfrak T_1$, $\mathfrak T_2$, $\mathfrak T_3$, and recalling the uniform bound~\eqref{eq:bnd.faces} on $\card(\cc F_T)$, we have
\begin{equation}\label{b:1}
  \frac{\cc A_T^+}{h_T}\sum_{F\in\cc F_T}\|\bm v_{\grad,F}-\grad v_T\|_F^2\lesssim \|\ul{\mr v}_T\|_{\mr{a},T}^2.
\end{equation}

(i.b) \emph{Bound on $\frac{\cc A_T^+}{h_T^3}\sum_{F\in\cc F_T}\| v_F-v_T\|_F^2$.}
For all $F\in\cc F_T$, inserting $\pm\pi_F^k\left(\pT\ul{\mr v}_T - \pi_T^k\pT\ul{\mr v}_T\right)$ into the norm, and using the linearity of $\pi_F^k$ and $\pi_T^k$ together with the fact that they preserve polynomials up to degree $k$ as projectors, we have that
\begin{equation}\label{a}
  \begin{aligned}
    \|v_F-v_T\|_F &=
    \|\pi^k_F\left(v_F - \pT\ul{\mr v}_T\right)
    + \pi^k_F\left(\pT \ul{\mr v}_T - \pi^k_T   \pT \ul{\mr v}_T\right)
    + \pi^k_T(\pT \ul{\mr v}_T  - v_T )\|_F \\
    &\le
    \|\pi^k_F(v_F - \pT \ul{\mr v}_T)\|_F
    + \|\pi^k_F (\pT \ul{\mr v}_T - \pi^k_T   \pT \ul{\mr v}_T)\|_F
    + \| \pi^k_T(\pT \ul{\mr v}_T - v_T)\|_F
    \\
    &\eqcolon \mathfrak T_1 + \mathfrak T_2 + \mathfrak T_3.
  \end{aligned}
\end{equation}
By the definition~\eqref{stabiliz} of $\mr{s}_T$, it is readily inferred that
$$
h_T^{-\nicefrac{3}{2}} \sqrt{\cc A^+_T} |\mathfrak T_1| \lesssim\|\ul{\mr v}_T\|_{\mr{a},T}.
$$ 
The second term can be estimated as follows:
$$
|\mathfrak T_2| \lesssim h_T^{-\nicefrac{1}{2}} \| \pT \ul{\mr v}_T - \pi^k_T   \pT \ul{\mr v}_T\|_T
\lesssim h_T^{-\nicefrac{1}{2}}h_T^2
| \pT \ul{\mr v}_T |_{H^2(T)} \lesssim h_T^{\nicefrac{3}{2}} \|\grad^2 \pT \ul{\mr v}_T\|_T,
$$
where we have used the $L^2(F)$-boundedness of $\pi^k_F$, the discrete trace inequality \eqref{tr_discr}, the uniform equivalence of face and element diameters \eqref{meshreg} to replace $h_F$ with $h_T$, and the approximation property \eqref{stima} with $l=k$, $s=2$, and $m=0$. {Again, here the hypothesis $k \ge 1$ is necessary to infer the second bound.}
Hence,
$$
h_T^{-\nicefrac{3}{2}} \sqrt{\cc A_T^+} |\mathfrak T_2| \lesssim \|\ul{\mr v}_T\|_{\mr{a},T}.
$$
Finally, using the discrete trace inequality~\eqref{tr_discr} followed by the definition~\eqref{stabiliz} of $\mr{s}_T$, we have
$$
h_T^{-\nicefrac{3}{2}}\sqrt{\cc A_T^+}|\mathfrak T_3| \lesssim \|\ul{\mr v}_T\|_{\mr{a},T}.
$$
Multiplying \eqref{a} by
$h_T^{-\nicefrac{3}{2}}\sqrt{\cc A^+_T}$, squaring, summing over $F\in\cc F_T$, using the above estimates for $\mathfrak T_1$, $\mathfrak T_2$, $\mathfrak T_3$, and recalling the uniform bound~\eqref{eq:bnd.faces} on $\card(\cc F_T)$, we arrive at
\begin{equation}\label{a:1}
  \frac{\cc A_T^+}{h_T^3}\sum_{F\in\cc F_T}\|v_F-v_T\|_F^2 \lesssim \|\ul{\mr v}_T\|_{\mr{a},T}^2.
\end{equation}

(i.c) \emph{Conclusion.} Combining~\eqref{primo pezzo}, \eqref{a:1}, and~\eqref{b:1}, the first inequality in \eqref{equivnorm} follows.

(ii) \emph{Boundedness.} Taking $ w=\pT \ul{\mr v}_T $ in \eqref{reconstruction_locale2}, using the Cauchy--Schwarz, discrete trace \eqref{tr_discr} and inverse inequalities \eqref{inv}, and simplifying, we get
\begin{equation}\label{e}
  \| \bb A_T^{\nicefrac{1}{2}}\grad^2 \pT \ul{\mr v}_T \|_{T}\lesssim\|\ul{\mr v}_T\|_{\bb A, T},
\end{equation}
which bounds the portion of $\|\ul{\mr v}_T\|_{\mr{a},T}$ stemming from the consistency term in \eqref{forma_locale}.
It remains to bound on the local stabilization terms in $\mr{s}_T(\ul{\mr v}_T, \ul{\mr v}_T)$.

(ii.a) \emph{Bound on $\frac{\cc A_T^+}{h_T^4} \| \pi^k_T  (\pT\ul{\mr v}_T - v_T)\|_T^2$.}
Inserting $\pm\pT\ul{\mr v}_T$ into the norm and using the triangle inequality, we have that
\begin{equation}\label{f}
  \| \pi^k_T  (\pT\ul{\mr v}_T - v_T)\|_T
  \le \|\pi^k_T   \pT \ul{\mr v}_T - \pT \ul{\mr v}_T\|_T + \|\pT \ul{\mr v}_T - v_T\|_T
  \eqcolon \mathfrak T_1 + \mathfrak T_2.
\end{equation}
For the first term, using the approximation property \eqref{stima} with $l=k$, $m=0$, and $s=2$, and \eqref{e}, we get
$$
h_T^{-2}\sqrt{\cc A_T^+}|\mathfrak T_1| \lesssim  \|\ul{\mr v}_T\|_{\bb A,T}.
$$
{Once more, we use here the fact that $k \ge 1$.}
For the second term, inserting $0=-\pi_T^1\pT\ul{\mr v}_T+\pi_T^1 v_T$ into the norm (see \eqref{fermeture}) and using the triangle inequality, we obtain
\begin{equation*}
  |\mathfrak T_2|
  = \|\pT\ul{\mr v}_T - \pi^1_T \pT\ul{\mr v}_T + \pi^1_T v_T  - v_T \|_T \\
  \le \|\pT \ul{\mr v}_T - \pi^1_T\pT \ul{\mr v}_T\|_T  + \|\pi^1_T v_T - v_T\|_T.
\end{equation*}
The approximation property \eqref{stima} with $l=1$, $m=0$, and $s=2$ gives $\|\pT \ul{\mr v}_T - \pi^1_T\pT \ul{\mr v}_T\|_T \lesssim h_T^2 \|\grad^2\pT \ul{\mr v}_T\|_T$ and
$ \|v_T - \pi^1_T v_T\|_T \lesssim h_T^2 \|\grad^2 v_T\|_T$ so that, accounting for~\eqref{e},
$$
h_T^{-2} \sqrt{\cc A_T^+} |\mathfrak T_2| \lesssim \|\ul{\mr v}_T\|_{\bb A,T}.
$$
Squaring \eqref{f}, multiplying the resulting inequality by $\cc A_T^+ / h_T^4$, and using the above estimates for $\mathfrak T_1$ and $\mathfrak T_2$ together with the uniform bound~\eqref{eq:bnd.faces} on $\card(\cc F_T)$, we conclude that
$$
\frac{\cc A_T^+}{h_T^4} \| \pi^k_T  (\pT\ul{\mr v}_T - v_T)\|_T^2 \lesssim  \|\ul{\mr v}_T\|_{\bb A,T}^2.
$$

(ii.b) \emph{Bound on $\frac{\cc A_T^+}{h_T}\sum_{F\in\cc F_T}\|\bm \pi^k_F(\grad \pT\ul{\mr v}_T - \bm v_{\grad,F})\|_F^2$.}
For any $F\in\cc F_T$, inserting $\pm\grad v_T$ into the norm, invoking the linearity of $\bm\pi_F^k$ together with the fact that it preserves polynomials in $\bb P^{k}(F)^2$ as a projector, and using the triangle inequality, we have that
\begin{equation}\label{h}
  \begin{aligned}
    \|\bm\pi^k_F(\grad \pT\ul{\mr v}_T - \bm v_{\grad,F})\|_F
    &\le \| \bm\pi^k_F\grad\left(\pT \ul{\mr v}_T - v_T\right)\|_F + \|\grad v_T - \bm v_{\grad,F}\|_F \\
    &\lesssim h_T^{-\nicefrac{3}{2}} \| \pT\ul{\mr v}_T - v_T\|_T + \| \grad v_T - \bm v_{\grad,F}\|_F \\
    &\lesssim \frac{h_T^{\nicefrac12}}{\sqrt{\cc A_T^+}} \| \ul{\mr v}_T\|_{\bb A,T} + \| \grad v_T - \bm v_{\grad,F}\|_F
  \end{aligned}
\end{equation}
where to pass to the second line we have used the $L^2(F)^2$-boundedness of $\bm\pi_F^k$, the discrete trace inequality \eqref{tr_discr}, and the inverse inequality \eqref{inv}, while to pass to the third line we have estimated the first addend as the term $\mathfrak T_2$ in~\eqref{f} {(which requires again $k\ge1$)}.
Thus, squaring the above inequality, summing over $F\in\cc F_T$, multiplying it by $\cc A_T^+/h_T$, and using the uniform bound \eqref{eq:bnd.faces} on $\card(\cc F_T)$, we finally infer
\begin{equation}\label{eq:cont.bnd.2}
  \frac{\cc A_T^+}{h_T}\sum_{F\in\cc F_T}\|\bm \pi^k_F(\grad \pT\ul{\mr v}_T - \bm v_{\grad,F})\|_F^2
  \lesssim \|\ul{\mr v}_T\|_{\bb A,T}^2.
\end{equation}

(ii.c) \emph{Bound on $\frac{\cc A_T^+}{h_T^3}\sum_{F\in\cc F_T}\|\pi^k_F(\pT \ul{\mr v}_T - v_F) \|_F^2$.}
For any $F\in\cc F_T$, inserting $\pm v_T$ into the norm, invoking the linearity of $\pi_F^k$ together with the fact that it preserves polynomials in $\bb P^k(F)$ as a projector, and using the triangle inequality, we infer that
\begin{equation}\label{g}
  \begin{aligned}
    \|\pi^k_F(\pT \ul{\mr v}_T - v_F) \|_F
    &\le \|\pi^k_F(\pT \ul{\mr v}_T - v_T)\|_F + \|v_F-v_T\|_F \\
    &\lesssim h_T^{-\nicefrac{1}{2}} \|\pT \ul{\mr v}_T - v_T \|_T + \|v_F-v_T\|_F \\
    &\lesssim \frac{h_T^{\nicefrac32}}{\sqrt{\cc A_T^+}} \| \ul{\mr v}_T\|_{\bb A,T} + \|v_F-v_T\|_F,
  \end{aligned}
\end{equation}
where to pass to the second line we have used the $L^2(F)$-boundedness of $\pi^k_F$ followed by the discrete trace inequality \eqref{tr_discr} and the uniform equivalence of the element and face diameters expressed by~\eqref{meshreg}, while to pass to the third line we have estimated the first addend as the term $\mathfrak T_2$ in~\eqref{f} {and, once more, we used the fact that $k\ge 1$}.
Hence, multiplying \eqref{g} by $h_T^{-\nicefrac{3}{2}}\sqrt{\cc A_T^+}$, squaring, summing over $F\in\cc F_T$, recalling \eqref{f}, and using the uniform bound \eqref{eq:bnd.faces} on $\card(\cc F_T)$, we conclude that
\begin{equation}\label{eq:cont.bnd.1}
  \frac{\cc A_T^+}{h_T^3}\sum_{F\in\cc F_T}\|\pi^k_F(\pT \ul{\mr v}_T - v_F) \|_F^2 \lesssim \| \ul{\mr v}_T\|_{\bb A,T}^2.
\end{equation}

(ii.d) \emph{Conclusion.}
The second inequality in \eqref{equivnorm} then follows combining \eqref{e},~\eqref{eq:cont.bnd.1}, and~\eqref{eq:cont.bnd.2} and recalling the definition~\eqref{localseminorm} of $\|{\cdot}\|_{\bb A,T}$.
\end{proof} 

%------------------------------------------------------------------------------%

\subsection{Global coercivity, boundedness, and consistency}\label{sec:analysis:err.est}

\begin{proof}[Proof of Lemma~\ref{stab_consist}]
  (i) \emph{Coercivity and boundedness.} The norm equivalence~\eqref{global_stab} is an immediate consequence of Lemma~\ref{coerciv} together with the definition~\eqref{norm discrete} of the $\|{\cdot}\|_{\bb A,h}$-norm.

  (ii) \emph{Consistency.} Let us prove~\eqref{consistenza}. An element-wise integration by parts yields
\begin{equation}\label{uno}
\begin{alignedat}{2}
\hspace{-.3cm}(\div\divb \bb A \grad^2 v, w_h)  = & \sum_{T\in\cc T_h} \bigg(
(\bb A_T\grad^2 v,\grad^2 w_T)_T - \sum_{F\in \cc F_T} (\divb \bb A_T\grad^2 v\cdot\bm n_{TF},w_F-w_T)_F\\
 & + \sum_{F\in \cc F_T} \left( (\bb A_T\grad^2 v)\bm n_{TF},  \bm w_{\grad,F}-\grad w_T \right)_F\bigg),
\end{alignedat}
\end{equation}
where we have used the fact that moments and Kirchhoff shear forces are continuous at interfaces owing to the regularity of $v$ (see~\eqref{cont} for the expression of these continuity properties for the exact solution $u$) and that
homogeneous boundary conditions are embedded in $\ul{\mr U}^k_{h,0}$. Now, let
\begin{equation}\label{notazioni}
\widehat{\ul{\mr v}}_h \coloneq \ul{\mr I}^k_h v,\quad \widehat{\ul{\mr v}}_T \coloneq \ul{\mr I}^k_T(v_{| T})\quad
\hbox{and}\quad\widecheck v_T \coloneq \pT \widehat{\ulr v}_T = \varpi_T^{k+2} v_{|T}; %% \pT \ul{\mr I}^k_T(v_{|T}) ;
\end{equation}
we have
\begin{equation}\label{due}
\begin{alignedat}{2}
\mr{a}_h(\widehat{\ul{\mr v}}_h,\ul{\mr w}_h) = &
% \sum_{T\in \cc T_h} \mr{a}_T(\widehat{\ul{\mr v}}_T,\ul{\mr w}_T) \\
 \sum_{T\in \cc T_h} \bigg( (\bb A_T\grad^2 \widecheck v_T, \grad^2 w_T)_T %\right.\\
%\left.
 -\sum_{F\in \cc F_T} \left(\divb \bb A_T\grad^2 \widecheck v_T\cdot \bm n_{TF},w_F-w_T\right)_F \\
& + \sum_{F\in \cc F_T} \left((\bb A_T\grad^2\widecheck v_T)\bm n_{TF}, \bm w_{\grad,F} - \grad w_T\right)_F
+ \mr{s}_T(\widehat{\ul{\mr v}}_T,\ul{\mr w}_T) \bigg).
\end{alignedat}
\end{equation}
Thus, letting $\cc E_h(\ul{\mr w}_h) \coloneq(\div\divb \bb A \grad^2 v, w_h) - \mr{a}_h(\widehat{\ul{\mr v}}_h,\ul{\mr w}_h)$,
\eqref{uno} and \eqref{due} yield
\begin{equation*}%\label{differenza}
\begin{alignedat}{2}
\cc E_h(\ul{\mr w}_h)  = & \sum_{T\in \cc T_h} \bigg( (\bb A_T\grad^2(\widecheck v_T - v),\grad^2 w_T)_T + % \\
%\left. 
%&+ \sum_{F\in \cc F_T} \left( [\bb A_T\grad^2(\widecheck v_T - v)]\bm n_{TF},\bm w_{\grad,F}-\grad w_T\right)_F \\
\left( (\bb A_T\grad^2(\widecheck v_T - v))\bm n_{TF},\bm w_{\grad,F}-\grad w_T\right)_F \\
&- \sum_{F\in \cc F_T} (\divb \bb A_T\grad^2(\widecheck v_T - v)\cdot \bm n_{TF},w_F-w_T)_F\bigg)
%&
+ \sum_{T\in \cc T_h} \mr{s}_T(\widehat{\ul{\mr v}}_T,\ul{\mr w}_T) \\
 \eqcolon &~ \mathfrak T_1 + \mathfrak T_2 + \mathfrak T_3 + \mathfrak T_4.
\end{alignedat}
\end{equation*}
By the definition \eqref{eq:biell} of the {local energy projector}, we have that
\begin{equation}\label{eq:consistenza:stima.T1}
  \mathfrak T_1 = 0.
\end{equation}
Using the approximation properties \eqref{approx_biell} with $l=k+2$, $s = k+3$, and $m=2,3$, we infer that
\begin{equation}\label{prima}
| \mathfrak T_2 + \mathfrak T_3| \lesssim 
h^{k+1} |v|_{{H^{k+3}(\cc T_h)}} \|\ul{\mr w}_h\|_{\bb A,h}.
\end{equation}
Moreover, for all $T\in\cc T_h$, we have $\mr{s}_T(\widehat{\ul{\mr v}}_T, \ul{\mr w}_T) \le \mr{s}_T(\widehat{\ul{\mr v}}_T,\widehat{\ul{\mr v}}_T)^{\nicefrac{1}{2}} \mr{s}_T(\ul{\mr w}_T \ul{\mr w}_T)^{\nicefrac{1}{2}}$;
as for the first factor, by \eqref{consistenza_s_T} we have
$\mr{s}_T(\widehat{\ul{\mr v}}_T,\widehat{\ul{\mr v}}_T)^{\nicefrac{1}{2}} \lesssim \sqrt{\cc A_T^+} h_T^{k+1} |v|_{H^{k+3}(T)},$ whereas the second inequality in \eqref{equivnorm} gives $\mr{s}_T(\ul{\mr w}_T, \ul{\mr w}_T)^{\nicefrac{1}{2}} \lesssim \|\ul{\mr w}_T\|_{\bb A,T}$, so that
\begin{equation}\label{seconda}
  |\mathfrak T_4| \lesssim  h^{k+1} |v|_{{H^{k+3}(\cc T_h)}}\|\ul{\mr w}_h\|_{\bb A,h}.
\end{equation}
Using \eqref{eq:consistenza:stima.T1}, \eqref{prima}, and \eqref{seconda} to estimate $\cc E_h(\ul{\mr w}_h)$, and using the resulting bound in the supremum in \eqref{consistenza} concludes the proof.
\end{proof}

%------------------------------------------------------------------------------%

%\appendix

\section{Proof of Theorem~\ref{thm:approx.biell}}\label{sec:approx.biell}

%\begin{proof}[Proof of Theorem~\ref{thm:approx.biell}]
  (i) \emph{Proof of \eqref{approx_properties}}.
  We apply \cite[Lemma 3]{Di-Pietro.Droniou:17}. Therefore, proving \eqref{approx_properties}
amounts to proving the following estimates:
\begin{subequations}
\begin{align}
\|\grad^2 \varpi^l_T v\|_T & \lesssim  \|\grad^2 v\|_T, \label{m=2}\\
\|\grad \varpi^l_T v\|_T & \lesssim \left( \|\grad v\|_T + h_T \|\grad^2 v\|_T \right),\label{m=1}\\
\|\varpi^l_T v\|_T & \lesssim  \left( \|v\|_T + h_T \|\grad v\|_T + h_T^2 \|\grad^2 v\|_T \right). \label{m=0}
\end{align}
\end{subequations}
where $a \lesssim b$ means $a \le Cb$ with $C>0$ as in \eqref{approx_biell}.

We start by proving \eqref{m=2}. The definition \eqref{eq:biell} of $\varpi^l_T$ implies that
\begin{equation}\label{inf_stima}
\begin{aligned}
  \sqrt{\cc A_T^-}\, \|\grad^2( v- \varpi^l_T v)\|_T
  &\le \| \bb A_T^{\nicefrac{1}{2}} \grad^2(v - \varpi^l_T v)\|_T \\
%%  &=  \inf_{z \in \bb P^{l}(T)} \| \bb A_T^{\nicefrac{1}{2}} \grad^2( v - z)\|_T \\
  &\le \| \bb A_T^{\nicefrac{1}{2}} \grad^2(v - \pi^l_T v)\|_T %\\
  %&\lesssim \sqrt{\cc A_T^+} \left( \|\grad^2 v\|_T + \|\grad\pi^l_T v\|_T\right)
  \lesssim\sqrt{\cc A_T^+}\|\grad^2 v\|_T,
\end{aligned}
\end{equation}
where we have used the definition of $\cc A_T^-$ (see Section~\ref{sec:discset:mesh}) in the first line, the characterization of $\varpi^l_T$ as $\arg\min_{z\in\bb P^l(T)}\| \bb A_T^{\nicefrac12}\grad^2(v-z)\|_T$ in the second line, along with the definition of $\cc A_T^+$ and the $H^2$-stability of the $L^2$-orthogonal projector (resulting from \eqref{stima} with $s=m=2$) to conclude.
Thus, using again the triangle inequality, we have that
$$
\|\grad^2 \varpi^l_T v\|_T \le \|\grad^2(\varpi^l_T v - v)\|_T + \|\grad^2 v\|_T \lesssim  \|\grad^2 v\|_T,
$$
and \eqref{m=2} is proved.

To prove \eqref{m=1}, we introduce the quantities $0=-\grad\pi^1_T\varpi^l_T v + \grad\pi^1_T v$ (recall the second condition in \eqref{eq:biell}) and $\pm\grad v$ inside the $L^2(T)$-norm of $\grad\varpi^l_T v$ to infer that
\begin{equation*}
\begin{aligned}
\|\grad\varpi^l_T v\|_T 
& \le \|\grad(\varpi^l_T v - \pi^1_T \varpi^l_T v)\|_T + \|\grad(v-\pi^1_T v)\|_T + \|\grad v\|_T \\
& \lesssim h_T \|\grad^2\varpi^l_T v\|_T + h_T \|\grad^2 v\|_T + \|\grad v\|_T \lesssim \left( \|\grad v\|_T + h_T \|\grad^2 v\|_T \right),
\end{aligned}
\end{equation*}
where we have used the approximation estimate \eqref{stima} for $\pi^1_T$ with $m=1$ and $s=2$ together with the fact that, for any $w\in H^2(T)$, $|w|_{H^2(T)}\lesssim\|\grad^2w\|_T$ to estimate the first two terms, and~\eqref{m=2} to conclude.

The proof of \eqref{m=0} is completely analogous. We obtain
\begin{equation*}
\begin{aligned}
\|\varpi^l_T v\|_T & \le \|\varpi^l_T v - \pi^1_T\varpi^l_T v\|_T + \|v - \pi^1_T v\|_T + \|v\|_T \\
& \lesssim h_T^2 \|\grad^2 \varpi^l_T v\|_T+ h_T \|\grad v\|_T + \|v\|_T \\
& \lesssim  \left( h_T^2 \|\grad^2 v\|_T + h_T \|\grad v\|_T +  \|v\|_T\right),
 \end{aligned}
\end{equation*}
where we have used \eqref{stima} to estimate the first two addends in the first line, with $m=0$ and $s=2$ for the first one
and with $m=0$ and $s=1$ for the second one. This concludes the proof of \eqref{approx_properties}. \\ \\

(ii) \emph{Proof of \eqref{approx_properties_b}}. For $m\le s-1$, by applying the continuous trace inequality \eqref{tr_cont} to
$w=\de^{\bm\alpha}(v - \varpi^l_T v) \in H^1(T)$ for all $\bm \alpha \in \bb N^2$ such that $\alpha_1+\alpha_2 = m$, we have
$$h_T^{\nicefrac{1}{2}} |v - \varpi^l_T v|_{H^m(\de T)} \lesssim |v - \varpi^l_T v|_{H^m(T)} + h_T |v - \varpi^l_T v|_{H^{m+1}(T)}.$$
The conclusion follows using \eqref{approx_properties} for $m$ and $m+1$ to bound the terms in the right-hand side.
%\end{proof}

%------------------------------------------------------------------------------%

\section{Concluding remarks}\label{sec:conclusioni}

Some concluding remarks are in order.

{%
\subsection{Computational cost of the method}\label{sec:conclusioni:comp.cost.method}
It is worth to draw some conclusions from the numerical tests set forth in Section \ref{sec:numerical.examples}, with particular reference to the L-shaped domain case. Indeed, in many applications, as well as from a theoretical viewpoint, it is interesting to estimate the computational cost of a given numerical method. Here, we can evaluate the computational cost of our method by comparing the sizes of the matrices associated with the bilinear form $\mr a_h$, as well as the number of nonzero elements of such matrices\footnote{
The latter, in particular, gives an insight into the stencil of the method.}, upon varying the polynomial degree $k$ and the number of elements $N$. These two quantities are represented in Tables \ref{taillematrice_table} and \ref{nonzeros_table}, respectively. As Table \ref{taillematrice_table} shows, in certain cases (compare, for instance, the results given by the two choices $k=2$, $N=136$ and $k=3$, $N=34$ in Table \ref{L_shaped_table}) using polynomials of high order on coarse triangulations is more convenient than using polynomials of lower order on finer triangulations to obtain a given numerical value of the discrete energy to two significant digits.
\begin{table}
 \caption{Size of the matrix associated with $\mr a_h$ with uniform mesh refinements for each polynomial degree \mbox{$k\in\{1,2,3,4\}$}. The number of triangular elements is given by $N$.}
  \label{taillematrice_table}{ 
    \begin{tabular}{|c|c|c|c|c|c|}
      \hline
      \diagbox[width=1.25cm, height=.55cm]{}{} & {$N = 34$} & {$N=136$} & {$N=544$} & {$N=2176$} & {$N=8704$} \\ \hline
      $k=1$ & 354 & 1320 & 5088 & 19968 & 79104 \\ \hline
      $k=2$ & 531 & 1980 & 7632 & 29952 & 118656  \\ \hline
      $k=3$ & 708 &  2640 & 10176 & 39936 & 158208  \\ \hline
      $k=4$ & 885 & 3300 & 12720 &  49920 & 197760 \\ \hline
    \end{tabular}
  }
\end{table}
\begin{table}
 \caption{Number of nonzero elements of the matrix associated with $\mr a_h$ with uniform mesh refinements for each polynomial degree \mbox{$k\in\{1,2,3,4\}$}. The number of triangular elements is given by $N$.}
  \label{nonzeros_table}{ 
    \begin{tabular}{|c|c|c|c|c|c|}
      \hline
      \diagbox[width=1.25cm, height=.55cm]{}{} & {$N = 34$} & {$N=136$} & {$N=544$} & {$N=2176$} & {$N=8704$} \\ \hline
      $k=1$ & 9468 & 37296 & 148032 & 589824 & 2354688 \\ \hline
      $k=2$ & 21303 & 83916 & 333072 & 1327104 & 5298048  \\ \hline
      $k=3$ & 37872 & 149184 & 592128 & 2359296 & 9418752  \\ \hline
      $k=4$ & 59175 & 233100 & 925200 &  3686400 & 14716800 \\ \hline
    \end{tabular}
  }
\end{table}
}
\subsection{Mixed formulations}

The results of this paper concern the primal formulation \eqref{static_hho} of the Kirchhoff--Love plate bending model problem. As it is well known, this problem admits  dual and mixed formulations that have been the basis for the development of mixed and hybrid nonconforming finite elements (see, e.g., \cite{brezzifortin}).
A HHO discretization based on a mixed formulation will make the object of a future work, as well as the study of its relation with the method presented here and its variations.
We notice, in passing, that a similar study for a second-order elliptic problem has been carried out in \cite{Aghili.Boyaval.ea:15} {and, in a more general setting, in \cite{Boffi.Di-Pietro:17}. The latter works can be regarded as a generalization to new generation polytopal methods of the classical hybridization techniques of Arnold--Brezzi \cite{Arnold.Brezzi:85}.}

\tbf{Acknowledgements.} The authors are grateful to Franco Brezzi (IMATI Pavia) for the fruitful discussions that have helped shape up this work.

%------------------------------------------------------------------------------%

\end{document}